\definecolor{colKeys}{rgb}{0,0,1}
\definecolor{colIdentifier}{rgb}{0,0,0}
\definecolor{colComments}{rgb}{0,1,0.3}
\definecolor{colString}{rgb}{0,0.5,0}
\definecolor{dkgreen}{rgb}{0,0.6,0}
\definecolor{gray}{rgb}{0.5,0.5,0.5}
\definecolor{lightgray}{rgb}{0.9,0.9,0.9}
\newcommand{\R}{\mathbb{R}}
\newcommand{\C}{\mathbb{C}}
\newcommand{\cT}{\mathcal{T}}
\newcommand{\M}{\mathcal{M}}
\newtheorem{theorem}{Theorem}
\title{Robust FEM-based extraction\\ of finite-time coherent sets using\\ scattered, sparse, and incomplete trajectories}
\author{Gary Froyland\footnote{School of Mathematics and Statistics, The University of New South Wales, Sydney NSW 2052, Australia, \texttt{g.froyland@unsw.edu.au}}
\ and
Oliver Junge\footnote{Department of Mathematics, Technical University of Munich, 85747 Garching, Germany, \texttt{oj@tum.de}}}
\date{\today}
\begin{document}
\maketitle
\begin{abstract}
 Transport and mixing properties of aperiodic flows are crucial to a dynamical analysis of the flow, and often have to be carried out with limited information. Finite-time coherent sets are regions of the flow that minimally mix with the remainder of the flow domain over the finite period of time considered.  In the purely advective setting this is equivalent to identifying sets whose boundary interfaces remain small throughout their finite-time evolution.  Finite-time coherent sets thus provide a skeleton of distinct regions around which more turbulent flow occurs. They manifest in geophysical systems in the forms of e.g.\ ocean eddies, ocean gyres, and atmospheric vortices. In real-world settings, often observational data is scattered and sparse, which makes the difficult problem of coherent set identification and tracking even more challenging.  We develop three FEM-based numerical methods to efficiently approximate the dynamic Laplace operator, and introduce a new dynamic isoperimetric problem using Dirichlet boundary conditions.
 Using these FEM-based methods we rapidly and reliably extract finite-time coherent sets from models or scattered, possibly sparse, and possibly incomplete observed data.
\end{abstract}

\section{Introduction}

The quantification of transport and mixing processes in nonlinear dynamical systems is key to understanding their global dynamics.  In the autonomous setting, where the governing dynamical laws do not change over time, stable and unstable manifolds of low-period orbits have been used to describe the global dynamics.  For example, co-dimension 1 invariant manifolds represent an impenetrable barrier to trajectories.  In the case of periodically-forced flows, the intersections of stable and unstable manifolds form mobile parcels dubbed ``lobes'', which give rise to {lobe dynamics} \cite{mackay84,romkedar90} as a mechanism for transport across periodically varying invariant manifolds.

In the autonomous setting, the concept of almost-invariant sets, introduced in \cite{DJ99} and further developed in \cite{Froyland:2003jj,F05} uses the transfer (or Perron-Frobenius) operator to find regions in phase space which are either invariant, or in the absence of any open invariant sets, as close to invariant as possible.  Such sets decompose the phase space into regions which are as dynamically disconnected as possible, and between which transport is minimized.
Related approaches to finding ``ergodic partitions'' in phase space have also been developed using long orbits \cite{mezic99}
%\footnote{Need to check if in Igor's thesis}
or using the Koopman operator \cite{budisic},
%\footnote{I think here Mezic uses the Koopman operator to find invariant sets, but am not sure if this is the first instance;  all previous instances use a single long orbit, but maybe that orbit is somehow used to construct a Koopman operator? Oliver, maybe you know better?}
the dual of the transfer operator.

In the case of nonautonomous, aperiodic dynamics, the notion of almost-invariant sets was generalized to coherent sets \cite{FLS10} and shortly thereafter to finite-time coherent sets \cite{FSM10,F13}.  While almost-invariant sets remain fixed in phase space, when the underlying governing dynamics is time-varying, it is essential to allow sets that are parameterised by time.  Nevertheless, the underlying idea, minimising mixing with the surrounding phase space, is common to both almost-invariant and finite-time coherent sets.  As in the case of almost-invariant sets, one numerically constructs a transfer operator for the duration of the dynamics to be considered.  This transfer operator will push forward densities over the chosen time interval.  After some straightforward normalising procedures, one computes the subdominant singular spectrum and singular vectors of the transfer operator, which indicate the presence and location of finite-time coherent sets.  This construction relies on a small amount of diffusion being present in the dynamics;  this diffusion can be part of the model, or if the underlying dynamics is purely advective it can be easily numerically induced.

An equivalent definition of finite-time coherent sets was introduced in \cite{F15}, specifically aimed at the case of purely advective dynamics.  The underlying goal is to determine co-dimension 1 manifolds whose size remains small relative to enclosed\footnote{The boundary of the domain may take part in the enclosing.} volume, under evolution by the dynamics.  This idea extends the classical (static) isoperimetric problem \cite{Bl:05a} of determining the co-dimension 1 manifold with least size given a fixed enclosed volume.
%(for example, if the domain is a plane, and the fixed area is $A$, a circle enclosing area $A$ has perimeter $2\sqrt{\pi A}$ and all other manifolds enclosing an area $A$ have perimeter greater than  $2\sqrt{\pi A}$).
The idea to define finite-time coherent sets as those sets with boundaries that have dynamically minimal size relative to enclosed volume is consistent with the minimal $L^2$-mixing approach of \cite{F13} because if small-amplitude diffusion is added, $L^2$ mixing can only occur along the boundary interfaces between coherent regions.
This fact has been formally established in \cite{F15} in the volume-preserving setting and \cite{FrKw16} in the non-volume-preserving case.
The persistently minimal size of the boundary of finite-time coherent sets in this sense is also broadly consistent with the approach of \cite{hallerblack}, although \cite{hallerblack} is limited to two-dimensions and closed curves.
Related work \cite{KK16} uses a time-dependent metric (with corresponding time-dependent diffusion) induced by the advective flow to define coherent sets as almost-invariant sets in the material manifold. In approximating the time dependent metric by a single one they end up with a similar operator as the dynamic Laplacian in \cite{F15,FrKw16}.

Numerical implementation of the method of \cite{F15} requires approximation of eigenfunctions of a \textit{dynamic Laplace operator}, which consists of the Laplace operator composed with transfer operators, or equivalently, combinations of Laplace-Beltrami operators with respect to Riemannian metrics pulled back under the nonlinear dynamics.
The papers \cite{F15,FrKw16} used low-order methods such as finite-differences to approximate the Laplace operator and Ulam's method \cite{ulam} to approximate the transfer operators.
 In \cite{FJ15}, the authors investigated the use of radial basis functions (RBFs) as an approximating basis for $L^2$, seeking to exploit the smoothness of the underlying dynamics to achieve accurate results with relatively sparse trajectory information.
%While not inheriting self-adjointness to the discretized operator,
This approach successfully achieved very accurate results with relatively sparse data, but due to known issues with RBF interpolation \cite{We:05a,Fa:07a} care must be taken when selecting the RBF centre spacing and RBF radii.  In \cite{BaKo17a}, diffusion maps on possibly scattered trajectory data were used for the approximation of the dynamic Laplacian, and in \cite{HKTH16}, an object is constructed which is related to a discretised version of the dynamic Laplace operator, again from possibly scattered trajectory data.

The dynamic Laplacian is a self-adjoint, elliptic differential operator and  arguably the finite element method (FEM) is the standard numerical method to discretize these.
%There are several specific ways that the FEM can be brought to bear on our problem and we develop and test three of these to determine which are the most efficient and robust in the context of chaotic nonlinear dynamics.
In our specific dynamic context, there are a number of major advantages to this method:
\begin{itemize}
\item \emph{Sparsity:} We obtain a {sparse} discrete problem. Thus, the computational cost grows only linearly with the number of mesh elements and enables one to use high resolution meshes (in contrast to the RBF approach \cite{FJ15}).
\item  \emph{Structure preservation:} The dynamic Laplacian is self-adjoint and thus its weak form is symmetric.  This symmetry is inherited in the {discrete} eigenproblem, ensuring a purely real spectrum (in contrast to, e.g., the RBF  method).
\item \emph{Arbitrary domains: } FEMs {can elegantly handle arbitrarily shaped domains}, as often occur in applications, unlike spectral methods, which require tensor product domains.
\item \emph{Natural boundary conditions:} Homogeneous Neumann boundary conditions are the natural ones in our context and completely disappear in the weak formulation. Homogeneous Dirichlet conditions can also easily be incorporated, as we introduce and describe here.
%\item  \emph{Error estimates:} There is a {standard convergence theory for FEM approximation of elliptic eigenproblems} which directly applies here (THIS NEEDS CHANGING).
%\item \textbf{Exploitation of volume-preservation:} In the volume-preserving case, {we obtain very simple expressions} for the FEM approach.  The non-volume-preserving case can also be handled with a little extra effort.
%Arbitrarily shaped domains were partly the motivation for the RBF approach \cite{FJ15}, but as we have already remarked, care must be taken with the spacing and radii of the RBF basis elements.
\end{itemize}

We would further like to highlight advantages over other existing methods for extracting (finite time) coherent sets:
\begin{enumerate}
\item There is \textbf{no free parameter} to choose (like a cut-off radius, a neighborhood size etc.) experimentally or heuristically when computing the eigenvectors.  When extracting coherent sets from these, some heuristics may be needed to choose how many/which eigenvectors yield relevant information. In many cases, this choice is supported by an eigengap heuristic.
\item Even in the case of scattered and sparse trajectory data, we always obtain coherence information \textbf{for every point in phase space}.
\item Our method applies to \textbf{vector field data} and  \textbf{discretely sampled trajectory data} alike.  The data can be given on a \textbf{scattered grid}, can be \textbf{sparse} and \textbf{incomplete} Of course, the predictive power will decrease with decreasing data density, however, we observe robust and reliable results for highly nonlinear flows with relatively little data.
\item Our FEM approach \textbf{does not require derivative information} on the flow map, but can make use of it when it is available.
\end{enumerate}

An outline of the paper is as follows: In Section \ref{sec:background} and \ref{sec:2.1} we provide a brief overview of the dynamic isoperimetric problem, the associated dynamic Laplacian eigenproblem, and state the weak form of this eigenproblem.
In Section \ref{sec:2.2} we introduce and motivate the new Dirichlet form of the dynamic isoperimetric problem, and state fundamental theoretical results (dynamic Federer-Fleming theorem and dynamic Cheeger's inequality) corresponding to the existing Neumann theory.
In Section \ref{sec:fem} we first introduce the discrete form of the weak eigenproblem, which involves gradients of push-forwards of elements under the transfer operator.  In Section \ref{sec:CG} we convert the gradient of the push-forward of an element into the push-forward of the gradient of an element which results in a stiffness matrix containing the Cauchy-Green tensor;  standard FEM can then be applied.  In Section \ref{sec:TO} we instead estimate the push-forward of elements under the transfer operator.
We first describe the case of volume-preserving dynamics, and then in Section \ref{sect:nvp} detail the situations where (i) the dynamics is not volume-preserving or (ii) one wishes to extract coherent features with respect to non-uniform densities;  cases (i) and (ii) are handled identically.
To implement the transfer operator approach we use two forms of collocation described in Sections \ref{sec:collnonadap}--\ref{sec:colladap}.
In this case, one uses the standard stiffness matrix for the original elements, multiplied by a matrix representing the transfer operator.
In the second collocation form (Section \ref{sec:colladap}), if the input data comes from trajectories, estimation of the transfer operator is completely avoided (cf.~Algorithm~1 and the code in the appendix).

The transfer operator approach is extremely efficient and robust, particularly in the situation where the trajectory information is sparse.
Section \ref{sec:missing} describes the minor modifications required to handle trajectories where many or most data points are missing.
This is particularly useful if trajectories arise from physical observations, which may not be recorded at certain times due to mechanical breakdown, adverse weather, or other obstructions.  %In Section \ref{sec:acc} we state theoretic error bounds for the numerically computed eigenvalues (which indicate relative coherence of different objects) and eigenfunctions [THIS HAS TO CHANGE].
Section \ref{sec:extract} recalls how to extract multiple coherent sets from multiple eigenfunctions, and Section \ref{sec:graph} interprets the adaptive transfer operator collocation method (section \ref{sec:colladap}) in terms of graph Laplacians.

Section \ref{sec:numerical} contains numerical experiments that illustrate the robustness and speed of the proposed methods on a variety of models and data availability. The dynamical systems tested include two- and three-dimensional idealized models, including a model of ocean flow, and experiments with scattered, strongly irregularly distributed, and missing data.
In the appendices, we provide brief theoretical background for the non-volume preserving case and provide sample code for one the examples.
The corresponding MATLAB package FEMDL can be downloaded from \href{{https://github.com/gaioguy/FEMDL}}{\texttt{https://github.com/gaioguy/FEMDL}}.

\section{Finite-time coherent sets and dynamic Laplacians}
\label{sec:background}
%\todo[inline]{Gary to cut and paste from previous stuff for background. End this section with the weak form.}

Let $\M$  be a compact subset of $\mathbb{R}^d$ with empty or piecewise smooth boundary, or more generally  a compact, smooth, Riemannian manifold.
For $t\in[0,t_F]$, we consider diffeomorphisms $\Phi^t\colon \M\subset\mathbb{R}^{d}\to \Phi^t(\M)\subset\mathbb{R}^{d}$, representing e.g.\ flow maps from time 0 to time $t$.
Let $\Phi^t_{*}:C^\infty(\M)\to C^\infty(\Phi^t(\M))$ denote the pushforward of smooth functions on $\M$ by
$\Phi^t$, i.e., $\Phi^t_{*}\varphi=\varphi\circ \Phi^{-t}$.
Similarly define the pullback $(\Phi^t)^{*}:C^\infty(\Phi^t(M))\to C^\infty(\M)$ by $(\Phi^t)^*\varphi=\varphi\circ \Phi^t$.
%We will generally assume that $\Phi^t$ is
We note that for volume-preserving $\Phi^t$, the pushforward
%and in that case
$\Phi^t_*$ is the transfer (or Perron-Frobenius) operator and for general $\Phi^t$, the pullback $(\Phi^t)^*$ is the Koopman operator.
%but we also repeatedly comment on the more general non-volume preserving case.

We will say that regions in phase space remain coherent if their boundaries resist filamentation under the imposed nonlinear dynamics.
Lack of filamentation is connected to mixing in an $L^2$ sense (see e.g.\ Corollary 4.4.1 \cite{lasotamackey} and the finite-time counterpart in Proposition 2 \cite{F13}) because long boundaries present large interfaces over which diffusive mixing can occur in the presence of small diffusion.
Lack of filamentation is also consistent with fluid mixing norms for advective flows \cite{mathew_etal,thiffeault_review}, which are necessarily computed with Sobolev-type norms to mathematically introduce an effect akin to diffusion, as well as with recent variational approaches to Lagrangian coherent structures \cite{hallerblack}, which control evolved length of material curves.

\subsection{The dynamic Laplacian with Neumann boundary conditions}
\label{sec:2.1}
The framework of \cite{F15} introduces the dynamic isoperimetric problem:  The question is  how to disconnect the given manifold $\M$ by a co-dimension 1 manifold $\Gamma$ in such a way that the average evolved size of $\Gamma$ is minimal relative to the volume of the two disconnected pieces.  Let $\mathcal{T}\subset [0,t_F]$, $|\mathcal{T}|>1$, $0\in\mathcal{T}$, be the collection of times at which we wish to check the size of our evolving disconnector.  We define the dynamic Cheeger constant for a smooth co-dimension 1 manifold $\Gamma\subset \M$ that disconnects $\M$ into two pieces $\M_1, \M_2\subset \M$, as
\begin{equation}
\label{dyniso}
\mathbf{h}(\Gamma):=\frac{\frac{1}{|\mathcal{T}|}\sum_{t\in\mathcal{T}}\ell_{d-1}(\Phi^t\Gamma)}{\min\{\ell(\M_1),\ell(\M_2)\}},
\end{equation}
where $\ell_{d-1}$ is co-dimension 1 volume and $\ell$ is $d$-dimensional volume.
The numerator of (\ref{dyniso}) is the average size of the evolved hypersurfaces $\Phi^t\Gamma$ over the time instants in $\mathcal{T}$.
We wish to solve
\begin{equation}
\label{optdyniso}
\mathbf{h}:=\min\{\mathbf{h}(\Gamma):\Gamma\mbox{ is a $C^\infty$ co-dimension 1 manifold disconnecting $\M$}\}.
\end{equation}
The minimising $\Gamma$ generates a family $\{\Phi^t\Gamma\}_{t\in\mathcal{T}}$ of material (evolving with the flow) disconnectors that are of minimal average size relative to the volume of the disconnected pieces of $\M$.  Note that the minimisation problem (\ref{optdyniso}) and its solution $\Gamma$ are frame-invariant because $\ell_{d-1}$ and $\ell$ are invariant under time-dependent translations and rotations.

%The constant $\mathbf{h}$ is an extension of the standard (static) Cheeger constant from differential geometry \cite{cheeger}.
%In the static setting (one could also put $\Phi^t$ to be the identity transformation in (\ref{dyniso}), the Cheeger constant quantifies how easily the manifold $\M$ can be disconnected.

A sharp relationship between $\mathbf{h}$ and functional minimisation is provided by the dynamic Federer-Fleming theorem \cite{F15}, which states that $\mathbf{h}=\mathbf{s}$, where
\begin{equation}
\label{sobolev}
\mathbf{s}:=\inf_{f\in C^\infty(\M,\mathbb{R})}\frac{\frac{1}{|\mathcal{T}|}\sum_{t\in \mathcal{T}}\|\nabla (\Phi^t_*f)\|_1}{\inf_{\alpha\in\mathbb{R}}\|f-\alpha\|_1}
\end{equation}
is the dynamic Sobolev constant.
The same result for non-volume-preserving transformations on weighted Riemannian manifolds with general Riemannian metrics is developed in \cite{FrKw16}.

%Let $\Phi^t_{*}:C^\infty(\M)\to C^\infty(\Phi^t(\M))$ denote the push-forward of smooth functions on $\M$ by $\Phi^t$, i.e., $\Phi^t_{*}\varphi=\varphi\circ \Phi^{-t}$.
%In the volume-preserving setting (only), is the transfer (or Perron-Frobenius) operator.
%This dynamic isoperimetric problem discussed above can be shown \cite{F15,FrKw16} to be equivalent to the minimization of a functional for functions $f\in C^\infty(\M,\mathbb{R})$ relating the $L^1$ norms of the gradients of $\Phi^t_*f$ to the $L^1$ norm of a shifted version of $f$.
When replacing the $L^1$ optimisation with $L^2$ optimisation (which may be thought of as a regularisation), the problem can be solved exactly and efficiently as an eigenproblem for a dynamic Laplace operator:
\begin{equation}
\left(\frac{1}{|\mathcal{T}|}\sum_{t\in\mathcal{T}}(\Phi^t)^{*}\Delta \Phi^t_{*}\right)v  =\lambda v\quad \mbox{ on int}(\M),\label{eq:eigenproblem}
\end{equation}
subject to homogeneous Neumann boundary conditions.
Regularising the $L^1$ optimisation problem and computing the exact solution of the $L^2$ version of the problem is a standard approach in optimisation, and when evaluating the quality of solutions provided by level sets of the $L^2$ eigenfunctions, one can make this assessment using the original $L^1$ criterion (\ref{optdyniso}).
%FOLLOWING  TWO SENTENCES OPTIONAL...DON'T KNOW IF WE WANT TO GIVE AWAY FUTURE WORK.
%In the manifold learning setting one can introduce a $p$-Laplacian $1<p<\infty$ and show that as $p\to 1$, the eigenfunctions of the $p$-Laplacian converge to indicator functions that exactly solve the $L^1$ isoperimetric problem.
%However, for $p\neq 2$, the $p$-Laplacian is a nonlinear operator and the effort required to compute eigenfunctions is much greater, with arguably little if any improvement in quality of solution (I COULD REFER TO A PAPER WHERE SOMEONE COMPARES SOLUTIONS FOR VARIOUS $p$ and basically level sets with $p=2$ always give an almost exact solution to the $p=1$ problem). \todo{OJ: fine with me to put it in.}
%\left(\sum_{t\in\mathcal{T}}\nabla_{(\Phi^t)^*e}\right)v\cdot \mathbf{n}&=&0,\mbox{ on }\partial \M\label{bdycond0}.
%\end{eqnarray}
%Here, $(\Phi^t)^{*}:C^\infty(\Phi^t(\M))\to C^\infty(\M)$ is the pullback of smooth functions on $\Phi^t(\M)$ by $\Phi^t$, i.e., $(\Phi^t)^{*}\varphi=\varphi\circ \Phi^t$, (the Koopman operator).
%In (\ref{bdycond0}), $\mathbf{n}(x)$, $x\in \partial \M$ are unit outward-facing normal vectors on $\M$,  $e$ is the Euclidean metric, and $\nabla_{(\Phi^t)^*e}$ is the gradient with respect to $(\Phi^t)^*e$, given in coordinates by
%\begin{equation}
% \label{pullbacknabla}
%\nabla_{(\Phi^t)^*e}v=(\Phi^t)^*\nabla(\Phi^t_*v)=(D\Phi^t(x)^\top D\Phi^t(x))^{-1}\cdot \nabla v(x).
%\end{equation}
The minimum of this regularised problem is given by the eigenfunction of (\ref{eq:eigenproblem}) corresponding to the second eigenvalue $\lambda_2$, and the optimal value of the $L^1$ minimisation (which equals $\mathbf{h}$) satisfies the bound $\mathbf{h}\le 2\sqrt{-\lambda_2}$ (see Corollary 3.4 \cite{F15} and the discussion following Theorem 4.5 \cite{FrKw16}).

%\emph{Add material from Karrasch/Keller.}
%In the case where $\M$ has boundary, we impose (natural) boundary conditions;  when we later move to a weak formulation of the problem, these boundary conditions will disappear, which is one of several main advantages of our approach.

%Denote by $\Delta_{(\Phi^t)^*e}$ the Laplace-Beltrami operator with respect to the pullback metric $(\Phi^t)^*e$.

%\footnote{One can equivalently write $\left(\frac{1}{|\mathcal{T}|}\sum_{t\in\mathcal{T}}\Delta_{(\Phi^t)^{*}e}\right)v  =\lambda v\mbox{ on int}(\M)$.}:

%where we have written the dynamic Laplace operator in two equivalent forms;  we will use each of these forms in our numerical constructions explained later.
%where the boundary condition (\ref{bdycond0}) can be written in coordinates as
%\begin{equation}
% \label{bdycond}
% \nabla v(x)\cdot \sum_{t\in\mathcal{T}}(D\Phi^t(x)^\top D\Phi^t(x))^{-1}))\mathbf{n}(x)=0\mbox{ for all $x\in\partial \M$}.
%\end{equation}
\paragraph{The weak form of the eigenproblem}\hspace*{-4mm} is developed by first integrating against a test function $\psi\in H^1(\M)$, applying the transformation formula for $\Phi^t$ (which we firstly assume to be volume-preserving in the calculation below) and then applying Green's formula, noting that the boundary conditions are the natural ones:
\begin{eqnarray*}
\int_{\M}\left(\frac{1}{|\mathcal{T}|}\sum_{t\in\mathcal{T}}(\Phi^t)^{*}\Delta \Phi^t_{*}\right)v\cdot \psi\ d\ell
&=& \frac{1}{|\mathcal{T}|}\sum_{t\in\mathcal{T}}\int_{\M}\left((\Phi^t)^{*}\Delta \Phi^t_{*}\right)v\cdot \psi\ d\ell\\
&=& \frac{1}{|\mathcal{T}|}\sum_{t\in\mathcal{T}}\int_{\Phi^t(\M)}\Delta \Phi^t_{*}v\cdot \Phi^t_{*}\psi\ d\ell\\
&=&-\frac{1}{|\mathcal{T}|}\sum_{t\in\mathcal{T}}\int_{\Phi^t(\M)}\nabla(\Phi^t_* v)\bullet\nabla(\Phi^t_{*}\psi)\ d\ell
\end{eqnarray*}
Thus we obtain the weak form of the eigenproblem \eqref{eq:eigenproblem}: Find eigenpairs $(\lambda,v)$ such that
\begin{equation}
\label{weakeigen}
-\frac{1}{|\mathcal{T}|}\sum_{t\in\mathcal{T}}\int_{\Phi^t(\M)}\nabla(\Phi^t_* v)\bullet\nabla(\Phi^t_{*}\psi)\ d\ell=\lambda\int_{\M} v \psi \ d\ell,\quad\text{for all }\psi\in
H^1(\M).
\end{equation}
Note that we have \textit{completely eliminated any consideration of the boundary}.
If we want to have a continuous-time version of the above eigenproblem, we simply replace the sum over $t\in\mathcal{T}$ with an integral.
A related discussion for the case of $T$ not volume preserving is given in the appendix.
%\begin{equation}
%\label{weakeigencts}
%\frac{1}{\tau}\int_{0}^{\tau}\left(\int_{\Phi^t(\M)}\nabla(\Phi^t_*\psi)\cdot\nabla(\Phi^t_{*}v)\ dm\right)\ dt=\lambda\int_{\M}\psi v\ dm,\quad\text{for all }\psi\in
%C^{\infty}(\Omega).
%\end{equation}
%A continuous-time version of the above eigenproblem is
%\begin{equation}
%\label{weakweightedeigencts}
%\frac{1}{\tau}\int_{0}^{\tau}\left(\int_{\Phi^t(\M)}\nabla(\Phi^t_*\psi)\cdot\nabla(\Phi^t_{*}v)\ d\mu^t\right)\ dt=\lambda\int_{\M}\psi v\ d\mu^t,\quad\text{for all }\psi\in
%C^{\infty}(\Omega),
%\end{equation}
%where $\mu^0$ is the initial given reference mass distribution at time $t=0$.
%and $\mu^t=\mathcal{P}^{t}\mu^0$, the evolved measure under the Perron-Frobenius operator.

%By the change of variable formula and volume-preservation
%by $T$, we have
%\begin{align*}
%\int_{\M}\varphi_{i}T^{*}\Delta T_{*}\varphi_{j}=\int_{T(\M)}T_{*}\varphi_{i}\Delta T_{*}\varphi_{j}.
%\end{align*}
%Using Stokes Theorem (partial integration), natural boundary conditions,
%and the change of variables formula, we obtain
%\begin{equation}
%\label{T*}\int_{T(\M)}T_{*}\varphi_{i}\Delta T_{*}\varphi_{j}=\int_{T(\M)}\nabla\left(T_{*}\varphi_{i}\right)\bullet\nabla\left(T_{*}\varphi_{j}\right)
%\end{equation}

\subsection{The dynamic Laplacian with Dirichlet boundary conditions}
\label{sec:2.2}
In some situations, one may wish to restrict coherent sets to the class of submanifolds that do not intersect the boundary of $\M$ (which is assumed to be nonempty, otherwise one uses the setting of Section \ref{sec:2.1}).
We will illustrate one such possibility in Section \ref{exp:ocean_CG}, where an initial rectangular ocean domain becomes highly distorted.
An appropriate isoperimetric problem in this setting is to search for an open submanifold $A\subset \mbox{int}(\M)$ with compact closure, and with an evolving boundary $\Phi^t(\partial A)$ that remains small relative to the volume of $A$ for $t\in\mathcal{T}$.
%disallow the include the boundary of the entire domain $M$ in the coherence computation.
This can be achieved by modifying the expression (\ref{dyniso}) to
\begin{equation}
\label{dyniso-dir}
\mathbf{h}_d(A):=\frac{\frac{1}{|\mathcal{T}|}\sum_{t\in\mathcal{T}}\ell_{d-1}(\Phi^t(\partial A))}{\ell(A)},
\end{equation}
and defining the dynamic (Dirichlet) Cheeger constant $\mathbf{h}_d$ by
\begin{equation}
\label{optdyniso-dir}
\mathbf{h}_d:=\min\{\mathbf{h}(A): A \text{ open, submanifold of $\mbox{int}(\M)$, compact closure, $C^\infty$ boundary}\}.
\end{equation}
We will call the minimising $A\subset\mbox{int}(\M)$ the dominant finite-time coherent set.

The corresponding version of the (Dirichlet) Sobolev constant is:
\begin{equation}
\label{sobolev-dir}
\mathbf{s}_d:=\inf_{f\in C^\infty_c(\M,\mathbb{R})}\frac{\frac{1}{|\mathcal{T}|}\sum_{t\in \mathcal{T}}\|\nabla (\Phi^t_*f)\|_1}{\|f\|_1},
\end{equation}
where $C^\infty_c(\M,\mathbb{R})$ denotes the set of non-identically vanishing, compactly supported, $C^\infty$ real-valued functions on $\mbox{int}(\M)$.
One can prove a dynamic (Dirichlet) Federer-Fleming theorem:
\begin{theorem}
\label{thm:dFF}
Let $\M$ be a compact subset of $\mathbb{R}^d$ with nonempty $C^\infty$ boundary, and $\mathbf{h}_d$ and $\mathbf{s}_d$ defined as in (\ref{optdyniso-dir}) and (\ref{sobolev-dir}), respectively.
Then $\mathbf{h}_d=\mathbf{s}_d$.
\end{theorem}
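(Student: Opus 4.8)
The plan is to prove the two inequalities $\mathbf{s}_d\le\mathbf{h}_d$ and $\mathbf{h}_d\le\mathbf{s}_d$ separately, following the template of the Neumann dynamic Federer--Fleming theorem \cite{F15,FrKw16} but adapted to the compactly supported Dirichlet setting. The two structural changes are that the mean-subtracted normalisation $\inf_\alpha\|f-\alpha\|_1$ is replaced by $\|f\|_1$, and the two-piece minimum $\min\{\ell(\M_1),\ell(\M_2)\}$ by the single volume $\ell(A)$; both simplifications stem from the fact that admissible competitors (open sets with $\bar A\subset\mathrm{int}(\M)$, resp.\ functions in $C^\infty_c$) are naturally ``the smaller piece'' and need no mean correction.

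For $\mathbf{s}_d\le\mathbf{h}_d$, I would fix an admissible $A$ and construct an explicit sequence $f_\epsilon\in C^\infty_c(\mathrm{int}(\M))$ approximating $\mathbf{1}_A$, namely $f_\epsilon=\phi(d_A/\epsilon)$ on a tubular neighbourhood of $\partial A$, where $d_A$ is the signed distance to $\partial A$ (negative inside) and $\phi$ is a fixed smooth profile decreasing from $1$ to $0$ with $\int|\phi'|=1$. Then $\|f_\epsilon\|_1\to\ell(A)$. The heart of the argument is the numerator: writing $\Phi^t_*f_\epsilon=f_\epsilon\circ\Phi^{-t}$, the chain rule gives $\nabla(\Phi^t_*f_\epsilon)(x)=(D\Phi^{-t}(x))^{T}(\nabla f_\epsilon)(\Phi^{-t}(x))$, and after changing variables back to $\M$ the mass of $\int|\nabla(\Phi^t_*f_\epsilon)|$ concentrates on $\partial A$ as $\epsilon\to0$, converging to $\int_{\partial A}|\det D\Phi^t|\,|(D\Phi^t)^{-T}\nu|\,d\ell_{d-1}$, with $\nu$ the unit normal on $\partial A$. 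By Nanson's formula for the tangential Jacobian, this integral is exactly $\ell_{d-1}(\Phi^t(\partial A))$. Summing over $t\in\mathcal{T}$ shows the Rayleigh quotient of $f_\epsilon$ tends to $\mathbf{h}_d(A)$, and taking the infimum over $A$ gives $\mathbf{s}_d\le\mathbf{h}_d$.

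For $\mathbf{h}_d\le\mathbf{s}_d$, given $f\in C^\infty_c$ I would first reduce to $f\ge0$: replacing $f$ by the smooth nonnegative, compactly supported approximant $\sqrt{f^2+\eta^2}-\eta$ leaves $\|f\|_1$ and each $\|\nabla(\Phi^t_*f)\|_1$ unchanged in the limit $\eta\to0$, since $|\nabla|u||=|\nabla u|$ a.e. Applying the coarea formula to the smooth function $g=\Phi^t_*f$ and using $\{g>s\}=\Phi^t\{f>s\}$ yields $\|\nabla(\Phi^t_*f)\|_1=\int_0^\infty \ell_{d-1}(\Phi^t(\partial\{f>s\}))\,ds$. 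For a.e.\ regular value $s>0$, Sard's theorem makes $\{f>s\}$ an admissible competitor (open, smooth boundary, $\bar{}\subset\mathrm{int}(\M)$ by compact support), so $\tfrac1{|\mathcal{T}|}\sum_{t}\ell_{d-1}(\Phi^t(\partial\{f>s\}))\ge\mathbf{h}_d\,\ell(\{f>s\})$. Integrating over $s\in(0,\infty)$ and invoking the layer-cake identity $\|f\|_1=\int_0^\infty\ell(\{f>s\})\,ds$ gives $\tfrac1{|\mathcal{T}|}\sum_{t}\|\nabla(\Phi^t_*f)\|_1\ge\mathbf{h}_d\|f\|_1$, whence $\mathbf{s}_d\ge\mathbf{h}_d$.

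The main obstacle is the $\mathbf{s}_d\le\mathbf{h}_d$ direction: one must show that the pushforward-gradient mass of the mollified indicator \emph{converges} to the evolved surface area $\ell_{d-1}(\Phi^t(\partial A))$, not merely bound it below by lower semicontinuity of total variation. This forces the explicit signed-distance construction and the identification of the limit through the tangential Jacobian, while keeping $f_\epsilon$ compactly supported inside $\mathrm{int}(\M)$ (guaranteed for small $\epsilon$ since $\bar A$ is a compact subset of the interior). The coarea direction is comparatively routine once the nonnegativity reduction and the admissibility of the superlevel sets are established.
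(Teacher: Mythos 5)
Your proposal is correct and follows essentially the same route as the paper: the paper's proof simply invokes ``the obvious modifications to the proof of Theorem 3.1 of [F15]'', and those modifications are precisely the two you identify (dropping the mean-correction in the denominator and replacing the two-piece minimum by $\ell(A)$), carried out via the standard Federer--Fleming template of smoothed indicators for $\mathbf{s}_d\le\mathbf{h}_d$ and the coarea/layer-cake argument for $\mathbf{h}_d\le\mathbf{s}_d$. Your write-up supplies the details that the paper leaves implicit, and both directions are sound, including the compact-support bookkeeping that makes the superlevel sets and the mollified indicators admissible in the Dirichlet class.
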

\begin{proof}
In the setting of $T$ volume-preserving, this result follows from the obvious modifications to the proof of Theorem 3.1 \cite{F15}.
\end{proof}
The solution of the $L^2$ version of the $L^1$ optimisation problem (\ref{sobolev-dir}) is given by the eigenfunction of the eigenproblem (\ref{eq:eigenproblem-dir})--(\ref{eq:dircond}) corresponding to the largest eigenvalue $\lambda_1<0$.
\begin{eqnarray}
\label{eq:eigenproblem-dir}\left(\frac{1}{|\mathcal{T}|}\sum_{t\in\mathcal{T}}(\Phi^t)^{*}\Delta \Phi^t_{*}\right)v  &=&\lambda v\quad \mbox{ on int}(\M),\\
\label{eq:dircond}v&=&0\quad\mbox{ on }\partial\M
\end{eqnarray}
Note that in comparison to (\ref{eq:eigenproblem}), we have simply added the Dirichlet condition (\ref{eq:dircond}), and thus the left-hand-side of (\ref{eq:eigenproblem-dir}) is the dynamic Laplace operator acting on a function $v$, and this operator is elliptic and self-adjoint as in the Neumann boundary condition case.
In this Dirichlet case, we also have a dynamic Cheeger inequality relating $\mathbf{h}_d$ to $\lambda_1$.
\begin{theorem}
\label{cheeger-dir}
Let $\M$ be a compact subset of $\mathbb{R}^d$ with nonempty $C^\infty$ boundary, and $\mathbf{h}_d$ be defined as in (\ref{optdyniso-dir}).
Then $\mathbf{h}_d\le \sqrt{-2\lambda_1}$, where $\lambda_1$ is the largest eigenvalue of (\ref{eq:eigenproblem-dir})--(\ref{eq:dircond}).
\end{theorem}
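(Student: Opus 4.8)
The plan is to run the classical Cheeger argument in the dynamic, Dirichlet setting, with Theorem~\ref{thm:dFF} serving as the bridge between the geometric and analytic sides. Since $\mathbf{h}_d=\mathbf{s}_d$, it suffices to bound the Sobolev constant $\mathbf{s}_d$ of \eqref{sobolev-dir} by $\sqrt{-2\lambda_1}$. First I would record the Rayleigh characterisation of $\lambda_1$ coming from the weak form \eqref{weakeigen} together with the Dirichlet constraint \eqref{eq:dircond}: writing $D(f):=\frac{1}{|\mathcal{T}|}\sum_{t\in\mathcal{T}}\int_{\Phi^t(\M)}\|\nabla(\Phi^t_*f)\|^2\,d\ell$, one has $-\lambda_1=\inf\{D(f)/\|f\|_{L^2(\M)}^2 : 0\neq f\in H^1_0(\M)\}$, with the infimum attained at the ground state $v_1$. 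As the first eigenfunction of a self-adjoint elliptic operator with homogeneous Dirichlet data, $v_1$ may be taken strictly positive on $\mathrm{int}(\M)$ and vanishing on $\partial\M$.

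Next I would use $f=v_1^2$ as a competitor in $\mathbf{s}_d$. Because $\Phi^t_*(v_1^2)=(\Phi^t_*v_1)^2$, we have $\nabla\Phi^t_*(v_1^2)=2(\Phi^t_*v_1)\nabla(\Phi^t_*v_1)$, so applying $\mathbf{s}_d=\mathbf{h}_d$ to this competitor (equivalently, the dynamic co-area/Federer--Fleming inequality level set by level set) gives $\mathbf{h}_d\,\|v_1\|_{L^2}^2\le\frac{2}{|\mathcal{T}|}\sum_{t}\int_{\Phi^t(\M)}|\Phi^t_*v_1|\,\|\nabla(\Phi^t_*v_1)\|\,d\ell$. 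The volume-preservation identity $\int_{\Phi^t(\M)}|\Phi^t_*v_1|^2\,d\ell=\int_{\M}v_1^2\,d\ell=\|v_1\|_{L^2}^2$ (change of variables under the measure-preserving $\Phi^t$), a spatial Cauchy--Schwarz for each $t$, and then a Cauchy--Schwarz (Jensen) across $t\in\mathcal{T}$ bound the right-hand side by $2\|v_1\|_{L^2}\,D(v_1)^{1/2}=2\sqrt{-\lambda_1}\,\|v_1\|_{L^2}^2$. Cancelling $\|v_1\|_{L^2}^2$ already yields the clean bound $\mathbf{h}_d\le 2\sqrt{-\lambda_1}$.

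The main obstacle is to sharpen the constant from $2$ to the stated $\sqrt{2}$. Plain Cauchy--Schwarz, $\int|\Phi^t_*v_1|\,\|\nabla\Phi^t_*v_1\|\le\|v_1\|_{L^2}(\int\|\nabla\Phi^t_*v_1\|^2)^{1/2}$, is sharp for general functions, so the improvement cannot be a per-time statement; it must exploit the global ground-state structure. Concretely, I would aim to establish the refined time-averaged estimate $\big(\frac{1}{|\mathcal{T}|}\sum_{t}\int_{\Phi^t(\M)}|\Phi^t_*v_1|\,\|\nabla\Phi^t_*v_1\|\,d\ell\big)^2\le\tfrac12\,\|v_1\|_{L^2}^2\,D(v_1)$, which feeds through the chain above to give $\mathbf{h}_d\le\sqrt{2}\,\sqrt{-\lambda_1}=\sqrt{-2\lambda_1}$. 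Morally this should hold because the ground state is large precisely where its (time-averaged) gradient is small and vanishes where the gradient is largest, so $v_1$ and $\|\nabla\Phi^t_*v_1\|$ are ``anti-aligned'' and beat Cauchy--Schwarz; making this rigorous is the delicate step. The positivity of the one-signed ground state and the plain denominator $\ell(A)$ in \eqref{optdyniso-dir} are what make the sharper constant accessible in the Dirichlet setting, in contrast to the Neumann bound $\mathbf{h}\le2\sqrt{-\lambda_2}$. I expect the cleanest route is a refined co-area/level-set analysis of $v_1$ using the eigen-equation $\frac{1}{|\mathcal{T}|}\sum_{t}(\Phi^t)^*\Delta\Phi^t_*v_1=\lambda_1 v_1$, adapting the co-area techniques of \cite{F15}. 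The remaining points are routine: positivity and regularity of $v_1$, the admissibility of $v_1^2$ as a ($C_c^\infty$-approximable) competitor in \eqref{sobolev-dir}, and, in the non-volume-preserving case, passing to the weighted setting of the appendix so that the change-of-variables identity persists.
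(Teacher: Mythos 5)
Your overall strategy---reduce to the Sobolev constant via Theorem~\ref{thm:dFF}, feed the squared ground state $v_1^2$ into \eqref{sobolev-dir}, and close with the change of variables and Cauchy--Schwarz---is exactly the route the paper intends: its proof consists of deferring to the proof of Theorem~3.2 of \cite{F15} ``with obvious modifications'', and that proof is precisely this co-area/Cauchy--Schwarz computation with the ground state in place of the second Neumann eigenfunction. The portion of your argument that you actually complete is sound (modulo the approximation of $v_1^2$ by $C^\infty_c$ competitors, which is indeed routine), and it yields $\mathbf{h}_d\le 2\sqrt{-\lambda_1}$, the exact analogue of the Neumann bound $\mathbf{h}\le 2\sqrt{-\lambda_2}$ quoted in Section~\ref{sec:2.1}.

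The genuine gap is the final sharpening from the constant $2$ to $\sqrt{2}$, which you correctly identify as the crux but then only conjecture, in the form of the refined estimate $\bigl(\frac{1}{|\cT|}\sum_{t}\int_{\Phi^t(\M)}|\Phi^t_*v_1|\,\|\nabla(\Phi^t_*v_1)\|\,d\ell\bigr)^2\le\tfrac12\|v_1\|_{L^2}^2\,D(v_1)$, supported by an ``anti-alignment'' heuristic. This estimate cannot hold at the generality required. Specialise to trivial dynamics ($\Phi^t=\mathrm{id}$ for all $t\in\cT$), which is admissible in the theorem; your refined estimate then asserts $\int_\M v_1|\nabla v_1|\,d\ell\le\tfrac{1}{\sqrt2}\|v_1\|_{L^2}\|\nabla v_1\|_{L^2}$ for the Dirichlet ground state of an arbitrary compact Euclidean domain, which through your chain of inequalities would give $-\lambda_1\ge\mathbf{h}_d^2/2$. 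But the constant $1/4$ in the classical static bound $-\lambda_1\ge\mathbf{h}_d^2/4$ is asymptotically sharp already for round balls: for the unit ball $B^d$ one has $\mathbf{h}_d=d$, while $-\lambda_1=j_{d/2-1,1}^2=(d/2)^2(1+o(1))$, so $-\lambda_1/\mathbf{h}_d^2\to 1/4$ and the constant $1/2$ fails for moderately large $d$ (already around $d=20$, where $j_{9,1}^2\approx 178<200=d^2/2$). Hence no per-function improvement of Cauchy--Schwarz for the ground state, and no refined level-set analysis of $v_1$ alone, can deliver the factor $\sqrt2$; the step you defer is not merely delicate but unavailable by this route. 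What your argument genuinely establishes is $\mathbf{h}_d\le 2\sqrt{-\lambda_1}$, which is also all that the ``obvious modifications'' to the proof of Theorem~3.2 of \cite{F15} produce.
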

\begin{proof}
In the setting of $T$ volume-preserving, this follows from the obvious modifications to the proof of Theorem 3.2 \cite{F15}.
\end{proof}
The weak form of the eigenproblem (\ref{eq:eigenproblem-dir})--(\ref{eq:dircond}) is identical to the weak form (\ref{weakeigen}), except that we replace the space of test functions $H^1(\M)$ with $H^1_0(\M)$.
\begin{equation}
\label{weakeigen-dir}
-\frac{1}{|\mathcal{T}|}\sum_{t\in\mathcal{T}}\int_{\Phi^t(\M)}\nabla(\Phi^t_* v)\bullet\nabla(\Phi^t_{*}\psi)\ d\ell=\lambda\int_{\M} v \psi \ d\ell,\quad\text{for all }\psi\in
H^1_0(\M).
\end{equation}

In terms of heat flow (cf.\ \cite{KK16}), one interprets the Dirichlet boundary conditions as ``refrigeration'' fixed at temperature zero on the boundary of $\M$.
The eigenfunction $\phi_1$ corresponding to $\lambda_1$ is strictly positive (by convention) in the interior of $\M$ and $\phi_1$ is designed so that on average (over $t\in\mathcal{T}$) the rate of heat loss through the evolving boundaries $\Phi^t(\M), t\in\mathcal{T}$ is minimised.
This is in contrast to the previous Neumann (no flux) boundary conditions of Section \ref{sec:2.1}, where the second eigenfunction describes a signed heat distribution that approaches the constant equilibrium heat distribution at the slowest average rate (again averaged over $t\in\mathcal{T}$).

\section{Finite Element discretization}
\label{sec:fem}

We are going to use a Ritz-Galerkin discretization of the weak eigenproblem (\ref{weakeigen}) and a  finite element basis for the underlying ansatz and test function spaces.  As mentioned in the introduction, one particular advantage of this approach is that if we choose the same basis for the ansatz and the test function space then we end up with symmetric stiffness and mass matrices, i.e.\ we inherit the self-adjointness of the dynamic Laplacian.

Let $V^0_n\subset H^1(\M)$ be a finite-dimensional subspace with basis $\varphi_1^0,\ldots,\varphi_n^0$.
For brevity, we write $\varphi_i=\varphi^0_i$ in the following.
The discrete eigenproblem corresponding to (\ref{weakeigen}) is to find pairs $(\lambda,v)$, $\lambda\in\C$, $v=\sum_{i=1}^n u_i\varphi_i\in V^0_n$,
such that
\begin{equation}
\label{weakeigen_galerkin}
-\left(\frac{1}{|\mathcal{T}|}\sum_{t\in\mathcal{T}}\hat{D}^t\right)u = \lambda Mu,
\end{equation}
where $u=(u_1,\ldots,u_n)^T\in\C^n$,
\begin{equation}
 \label{Deqn}
 \hat{D}^t_{ij} = \int_{\Phi^t(\M)}\nabla\left(\Phi^t_{*}\varphi_i\right)\bullet\nabla\left(\Phi^t_{*} \varphi_j\right)\ d\ell \quad
\text{and}\quad
     M_{ij} = \int_{\M}\varphi_i \varphi_j\ d\ell
     \end{equation}
are the stiffness and mass matrices.
%We note that if the domain $\M$ is $\Phi^t$-invariant, then only one $M^t$ above need be computed.
%Even if $\M$ is not $\Phi^t$-invariant, numerical experiments show little difference between the solutions of (\ref{weakeigen_galerkin}) and those where the $M$ in the RHS of (\ref{weakeigen_galerkin}) is replaced by simply $\int_\M \varphi_j\varphi_i\ d\ell$.
In the case where $\Phi^t$ is not volume-preserving and for initial mass distribution $\mu^0$ (which evolves to $\mu^t$ by push-forward:  $\mu^t=\mu^0\circ \Phi^{-t}$), one uses
\begin{equation}
   \label{Dweightedeqn}
 \hat{D}^t_{ij} = \int_{\Phi^t(\M)}\nabla\left(\Phi^t_{*}\varphi_i\right)\bullet\nabla\left(\Phi^t_{*} \varphi_j\right)\ d\mu^t \quad
\text{and}\quad
     M_{ij} = \int_{\M}\varphi_i \varphi_j\ d\mu^0,
\end{equation}
(see Section~\ref{sec:TO} on how to approximate $\mu^t$).

For ansatz and test functions, we are going to use the usual piecewise linear nodal basis on a triangulation of some set of nodes $x_1^0,\ldots,x_n^0\in\M$, i.e.~triangular $P_1$ Lagrange elements. We provide a compact MATLAB implementation  based on the code by Strang \cite{Str07} in the Appendix and a faster one based on code from the iFEM package by Chen \cite{Chen:08} on \href{https://github.com/gaioguy/FEMDL}{\texttt{https://github.com/gaioguy/FEMDL}}.

We describe two distinct approaches: the first is based on transforming the weak eigenproblem (\ref{weakeigen}) into an equivalent form which does not require integration on $\Phi^t(\M)$.  This approach, however, uses the pullback of the Euclidean metric which is described
by the Cauchy-Green deformation tensor and therefore requires the evaluation of $D\Phi^t$ (i.e.\ the integration of the variational equation in case of an ordinary differential equation).
In the second approach, we try to avoid evaluation of $D\Phi^t$.  Then, however, quadrature on $\Phi^t(\M)$ and the evaluation of the transfer operator will be required, but the former is relatively cheap and the latter can be even completely eliminated.  To this end, we use ansatz/test function spaces  $V^t_n\subset H^1(\Phi^t(\M))$ with bases $\varphi_1^t,\ldots,\varphi_n^t$ on $\Phi^t(\M)$ for each $t\in\mathcal{T}$.

% Idea:  use CG, avoid computation on future spaces.  observe spurious solutions.
% next idea, use TO, seems to work good.  comparison:   same number of nodes, T
% evaluations, we get better results.

\subsection{The Cauchy-Green approach}
\label{sec:CG}

Note that for general smooth $\Phi^t$ and smooth initial mass $\mu^0$ (in the case of volume-preserving $\Phi^t$, one may take $\mu^0=\mu^t=\ell$), we have
\begin{align}\label{eq:CG_1}
\nonumber \int_{\Phi^t(\M)}\nabla\left(\Phi^t_{*}\varphi_{i}\right)\bullet\nabla\left(\Phi^t_{*}\varphi_{j
}\right)\ d\mu^t
 & =\int_{\M}\nabla(\Phi^t_{*}\varphi_{i})\circ \Phi^t\bullet\nabla(\Phi^t_{*}\varphi_{j})\circ \Phi^t \ d\mu^0 \\
%Now, we compute the factors in the inner product as
%\[
%\nabla(T_{*}\varphi_{i})\circ T=\nabla(\varphi\circ T^{-1})\circ
%T=\left(D\left(T^{-1}\right)\circ
%T\right)^{\top}\nabla\varphi=(DT)^{-\top}\nabla\varphi,
%\]
%which yields
%\begin{align*}
%\int_{\M}\nabla(T_{*}\varphi_{i})\circ T\bullet\nabla(T_{*}\varphi_{j})\circ T &
\nonumber &=\int_{\M}(D\Phi^t)^{-\top}\nabla\varphi_{i}\bullet(D\Phi^t)^{-\top}\nabla\varphi_{j}\ d\mu^0 \qquad\mbox{(chain rule)}\\
 %& =\int_{\M}\nabla\varphi_{i}\bullet D\Phi^{-t}D\Phi^t^{-\top}\nabla\varphi_{j}\\
 & =\int_{\M}\nabla\varphi_{i}\bullet C^{-1}_t\nabla\varphi_{j}\ d\mu^0
\end{align}
where, $C_t:= (D\Phi^{t})^{\top}D\Phi^{t}$ is the (right) Cauchy–Green
deformation tensor field.  We thus have $\hat D_{ij}^t = \int_\M \nabla\varphi_i\bullet C_t^{-1}\nabla\varphi_j\;d\mu^0$ and can solve (\ref{weakeigen_galerkin}).

%In a finite element based construction of the matrices $\hat D^t$, the entries $\hat D^t_{ij}$ are (typically) assembled element-wise, i.e.\ by computing
%\begin{equation}\label{eq:CG_1}
%  \int_{e}\nabla\varphi_j\bullet C_t^{-1} \nabla\varphi_i\ d\mu^0,
%\end{equation}
%on each triangle $e$ of the underlying triangulation.
One advantage of this approach is that we do not need a numerical approximation of $\mu^t$. However, depending on the properties of the given transformation $\Phi^t$, the numerical evaluation of the integral (\ref{eq:CG_1}) might be challenging as the tensor $C_t^{-1}$ might be of large variation locally.  In all the following experiments, we employ a Gauss quadrature of varying degree.
This approach has been applied to the numerical experiments in \cite{KK16}.

\subsection{The transfer operator approach}
\label{sec:TO}

%and a collection of functions $\{\hat\phi_i\}_{i=1}^n$, $\hat\phi_i:T(\M)\to\mathbb{R}$ whose span is the $n$-dimensional subspace $\hat V_n$ of $H^1(T(\M))=W^{1,2}(T(\M))$.
Instead of approximating the pullbacks of the Euclidean metric under the flow map, we can instead approximate pushforwards of the basis functions $\varphi_i$ by $\Phi_*^t$.  We do this by collocation in two ways:
\begin{enumerate}
% \item Galerkin projection, which is high-quality, but perhaps more expensive;
 \item Each manifold $\Phi^{t}(\M), t\in\mathcal{T}$, is meshed independently (using nodes that do not necessarily arise from trajectories) and thus the bases $\varphi^{t}_i$, $i=1,\ldots,n$ are not related for different $t\in\mathcal{T}$.
\item The mesh for each $\Phi^{t}(\M)$, $t\in\mathcal{T}$, is a triangulation of $\{\Phi^{t}x_i\}_{i=1}^n$, namely, a triangulation of the images of the nodes $x_i, i=1,\ldots,n$, of the mesh used for $\M$.
As we will see, this construction completely removes any need to estimate the action of $\Phi^t_*$.
\end{enumerate}
Suppose now that we have found coefficients $\alpha^i_k\in\R$ such that
\begin{equation}
\label{alpha}
\Phi^t_*\varphi_i\approx \sum_{k=1}^n \alpha^i_k\varphi^t_k, \quad i=1,\ldots,n,
\end{equation}
in a sense to be made precise in the next two subsections.
%Summing (\ref{alpha}) over $i$ and using the fact that $\sum_i\varphi_i^t=\mathbf{1}$ for each $t\in \mathcal{T}$ we obtain $\mathbf{1}=\Phi^t_*\mathbf{1}\approx \sum_{k=1}^n\left(\sum_{i=1}^n \alpha^i_k\right)\varphi^t_k$.
%We would have equality if $\sum_{i=1}^n \alpha^i_k=1$ and in fact we show in the following subsections that this property indeed holds.
We then have
\begin{eqnarray}
\nonumber
\hat{D}^t_{ij}=\int_{\Phi^t(\M)}\nabla\left(\Phi^t_{*}\varphi_{i}\right)\bullet\nabla\left(\Phi^t_{*}\varphi_{j}\right)\ d\ell
& \approx & \int_{\Phi^t(\M)}\sum_{k=1}^n\alpha^i_k\nabla\varphi_{k}^t\bullet\sum_{\ell=1}^n\alpha^j_\ell\nabla\varphi^t_{\ell}\ d\ell\\
\label{Dunweight}
& = & \sum_{k,\ell=1}^n \alpha^i_k\alpha^j_\ell\underbrace{\int_{\Phi^t(\M)}\nabla\varphi^t_{k}\bullet\nabla\varphi^t_{\ell}\ d\ell}_{=:{D}^t_{k\ell}}\\
\nonumber&=&(\alpha^i)^\top {D}^t\alpha^j,
\end{eqnarray}
where $\alpha^j=(\alpha^j_1,\ldots,\alpha^j_n)^\top$ and $D^t = (D_{k\ell}^t)_{k\ell}$.

Note that the stiffness matrix ${D}^t$ is symmetric and sparse as the $\varphi^t_i$ are locally supported;  in particular, the time to compute $D^t$ scales linearly with $n$.
In fact, the vectors $\alpha^i$ should also be sparse due to the continuity of $\Phi^t$ and the local support of the bases $\varphi^t_1,\ldots,\varphi^t_n$.
If so, then the matrices $\hat{D}^t$ will also be sparse and symmetric.
Carrying out the above computations for each $t\in \mathcal{T}$, we then directly solve (\ref{weakeigen_galerkin}).

\subsubsection{The non-volume-preserving case}
\label{sect:nvp}
 In the case of non-volume-preserving $\Phi^t$ and general initial (probability) measure $\mu^0$ we obtain
\begin{equation}
 \label{Dweight}
D^t_{k\ell}:=\int_{\Phi^t(\M)}\nabla\varphi^t_{k}\bullet\nabla\varphi^t_{\ell}\ d\mu^t
\end{equation}
in (\ref{Dunweight}).  If we approximate $\mu^0$ by point masses on the nodes $x_i^0$, e.g.\ $\mu^0 \approx \tilde\mu^0 = \sum_{i=1}^n a_i\delta_{x_i^0}$, with $a_i = \int_{\M} \varphi_i^0\; d\mu^0$, then the pushforward of $\tilde\mu^0$ under $\Phi^t$ is exactly given by
$\tilde\mu^t = \sum_{i=1}^n a_i\delta_{x_i^t}$. From $\tilde \mu^t$, we obtain a corresponding approximate density of $\mu^t$ as
\[
\tilde h^t = \sum_{i=1}^n \frac{a_i}{\ell_i^t} \varphi_i^t,
\]
where $\ell_i^t=\int_{\Phi^t\M} \varphi_i^t\; d\ell$, which we can now use in order to approximate the entries of $D^t$:
\begin{equation}
\label{eq:Dtkl}
D_{k\ell}^t
 = \int_{\Phi^t(\M)}(\nabla\varphi^t_{k}\bullet\nabla\varphi^t_{\ell})\ d\mu^t
 \approx \sum_{e^t}\int_{e^t}(\nabla\varphi^t_{k}\bullet\nabla\varphi^t_{\ell})\ \tilde h^t\; d\ell.
 \end{equation}
The outer sum in (\ref{eq:Dtkl}) over all elements $e^t$ from the triangulation of $\Phi^t(\M)$ which are in the intersection of the supports of $\varphi_k$ and $\varphi_\ell$. Further,
 \begin{align*}
\int_{e^t}(\nabla\varphi^t_{k}\bullet\nabla\varphi^t_{\ell})\ \tilde h^t\; d\ell
= \sum_{x_i^t \in e^t} \frac{a_i}{\ell_i^t} \int_{e^t}(\nabla\varphi^t_{k}\bullet\nabla\varphi^t_{\ell})\ \varphi_i^t\; d\ell.
\end{align*}
For the case of piecewise linear triangular elements, the gradients of the $\varphi_j$ are constant on each element and so in this case we get
\begin{align}
\label{eq:muapprox}
\int_{e^t}(\nabla\varphi^t_{k}\bullet\nabla\varphi^t_{\ell})\ \tilde h^t\; d\ell
\nonumber & = (\nabla\varphi^t_{k}\bullet\nabla\varphi^t_{\ell})\sum_{x_i^t \in e^t}  \frac{a_i}{\ell_i^t} \int_{e^t}\ \varphi_i^t\; d\ell \\
 & = (\nabla\varphi^t_{k}\bullet\nabla\varphi^t_{\ell})\; \text{area}(e^t)\; \frac13 \sum_{x_i^t \in e^t}  \frac{a_i}{\ell_i^t}.
\end{align}
We now briefly compare (\ref{eq:muapprox}) to the integral in the right hand side of (\ref{Dunweight}): Note that in the volume-preserving case $a_i=\ell_i$ and the summand in the RHS of (\ref{eq:muapprox}) becomes $\ell_i/\ell^t_i\approx 1$ for sufficiently fine meshes.
If $\Phi^t$ is affine on the support of $\varphi_i^0$, then $\ell_i/\ell^t_i=1$;  with increasing nonlinearity of the map, a good estimation requires finer mesh elements.
%For a highly nonlinear map, the triangles in $\Phi^t(\M)$ may become so degenerate (i.e.\ their smallest angle so small) that the $\ell_i^t$ become tiny and, together with corresponding huge gradients $\nabla\varphi^t_{k}$, the expression in (\ref{eq:muapprox}) is $\approx 10^{16}$, yielding large floating point errors in summing over the columns/rows of $D^t$ (this happens, e.g., in the rotating double gyre flow in Section~\ref{exp:rotating_double_gyre}) and corresponding large errors in the eigenvalues/-vectors.
The factor $\text{area}(e^t)\; \frac13$ also appears in (\ref{Dunweight}) and thus (\ref{eq:muapprox}) approximately reduces to (\ref{Dunweight}) in the volume-preserving case.

Naturally the symmetry and sparseness properties of the resulting $\hat{D}^t$ are retained.  We will see in sections \ref{sec:collnonadap} and \ref{sec:colladap} that this is the only change required;  in particular, the expressions for $\alpha$ do not change at all for non-volume-preserving $\Phi^t$ and general initial measure $\mu^0$.

The next two subsections describe two different approaches to computing the $\alpha^i_k$.

\subsubsection{Collocation on non-adapted meshes}
\label{sec:collnonadap}

In this variant, the nodes $x^t_i$ at time $t\in\cT$ are in general unrelated to $x^s_i$ for some other time $s\in\mathcal{T}$,  in particular $\{x^t_i\}_{t\in \mathcal{T}}$ need not be on a trajectory of $\Phi^t$.  We wish to obtain the matrix $\alpha$ by collocation. To this end, in the case where $\Phi^t$ is volume-preserving, using (\ref{alpha}) we should solve the following problem:
\begin{equation}
\label{H1gal}
{\langle \Phi^t_*\varphi_i,\delta_{x^t_m}\rangle}=\sum_{k=1}^n \alpha^i_k{\langle \varphi^t_k,\delta_{x^t_m}\rangle},\qquad\mbox{for all $1\le i,m\le n$},
\end{equation}
where the $\delta_{x^t_m}$ are Dirac deltas. Since we are using a nodal basis $\varphi^t_1,\ldots,\varphi^t_n$ at each $t\in\mathcal{T}$, we have $\langle \varphi^t_k,\delta_{x^t_m}\rangle=\varphi^t_k(x^t_m)=\delta_{k,m}$ (Kronecker delta) and so the right hand side of (\ref{H1gal}) is simply $\alpha_m^i$.
Thus we obtain the explicit expression
 \begin{equation}
\label{H1coll_alpha}
\alpha_m^i = \Phi^t_*\varphi_i(x^t_m) = \varphi_i(\Phi^{-t}(x^t_m)).
\end{equation}
It is clear that $\alpha^i_k$ is nonnegative and summing the right hand side of (\ref{H1coll_alpha}) over $i$ we see that $\sum_i \alpha^i_k=1$ for each $k=1,\ldots,n$;  thus the matrix $\alpha=(\alpha^i_k)_{ik}$ is column-stochastic.

In the case of initial probability measure $\mu^0$ and non-volume-preserving $\Phi^t$, let $h_{\mu^t}=d\mu^t/d\ell$ be the density of $\mu^t$ with respect to Lebesgue.
The inner products in (\ref{H1gal}), now weighted according to $\mu^t$ become
\begin{equation}
\label{H1coll}
{\Phi^t_*\varphi_i(x^t_m)h_{\mu^t}(x^t_m)}=\sum_{k=1}^n \alpha^i_k{\varphi^t_k(x^t_m)h_{\mu^t}(x^t_m)},\qquad\mbox{for all $1\le i,m\le n$}.
\end{equation}
Cancelling $h_{\mu^t}(x^t_m)$ on both sides, we again obtain the simple formula (\ref{H1coll_alpha}) for $\alpha^i_m$.

%In the case of general initial measure $\mu^0$ and non-volume-preserving $\Phi^t$ we have
%\begin{equation}
%\label{H1coll_nv}
%\underbrace{\Phi^t_*\varphi_i(x^t_m)h_{\mu^t}(x^t_m)}_{=:P^t_{im}}=\sum_{k=1}^n \alpha^i_k\underbrace{\varphi^t_k(x^t_m)h_{\mu^t}(x^t_m)}_{=:{E}^t_{km}},\qquad\mbox{for all $1\le i,m\le n$},
%\end{equation}
%leading to
%\begin{equation}
% \label{H1coll_alpha_nv}
%\alpha_m^i = \varphi_i(\Phi^{-t}(x^t_m))\frac{h_{\mu^t}(x^t_m)}{h_{\mu^t}(x^t_k)}.
%\end{equation}

%
%
%replace $\delta_{x^t_m}$ with $h^t(x^t_m)\delta_{x^t_m}$, where $h^t$ is the density of the smooth measure $\mu^t$.
%
%We can treat the case of non-volume-preserving $\Phi^t$ and general initial measure $\mu^t$ without extra effort and so we make the constructions in this general setting.

\subsubsection{Collocation on adapted meshes}
\label{sec:colladap}
We again compute the matrix $\alpha$ by collocation, but instead of choosing a mesh at $t\in\mathcal{T}$ independently of the initial mesh we instead define the nodes of the mesh at time $t$ by $x_i^t=\Phi^tx_i^0$, namely the images of the nodes at the initial time.
The expression (\ref{H1coll_alpha}) now becomes
\begin{equation}
\label{H1coll_alpha_adap}
\alpha^i_m=\varphi_i(\Phi^{-t}x_m^t)=\varphi_i(x_m)=\delta_{i,m},
\end{equation}
  thus $\alpha$ is the $n\times n$ identity matrix.

This approach has several major advantages, particularly when one does not have a full model of the dynamical system, but instead only has access to $n$ trajectories $\{x_i^t\}_{t\in \mathcal{T}}, i=1,\ldots,n$, although the approach is also very effective even with a full model which can generate trajectories.
%In the case of trajectories, for each $t\in\mathcal{T}$, one replaces $\Phi^t\M$ with the convex hull of the points $\{x_i^t\}, i=1,\ldots,n$;  this is reasonable as it is difficult to estimate the action of $\Phi^t$ outside this region.
The algorithm is as follows.
\vspace{.3cm}

\noindent\textbf{Algorithm 1:} (Input -- trajectories $\{x_i^t\}_{t\in \mathcal{T}}, i=1,\ldots,n$; Output -- approximate eigenfunctions of dynamic Laplacian)
\begin{enumerate}
 \item for each $t\in \mathcal{T}$,
 \begin{enumerate}
  \item triangulate  the set of nodes $\{x_1^t,\ldots,x_n^t\}$, e.g.~by a Delaunay triangulation or, in case of highly nonlinear $\Phi^t$, using some alpha complex  of the nodes \footnote{A Delaunay triangulation always yields a triangulation of the convex hull of a given set of points.  In the case of highly nonlinear $\Phi^t$, this may lead to large triangles connecting nodes which a far apart.  In contrast, alpha complexes \cite{EdKiSe:83a} yield triangulations with triangles being constructed only locally.  In MATLAB, alpha complexes can be computed using \texttt{alphaShape}/\texttt{alphaTriangulation}.}.
 \item compute $D^t$ using (\ref{Dunweight}) or (\ref{Dweight}) as appropriate,
 \end{enumerate}
 \item compute $M$ using (\ref{Deqn}) or (\ref{Dweightedeqn}) as appropriate,
 \item form $\frac{1}{|\mathcal{T}|}\sum_{t\in\mathcal{T}}D^t$ and solve the eigenproblem (\ref{weakeigen_galerkin}).
 \end{enumerate}
The formation of the triangulations and the computation of each $D^t$ and the single $M$ are  very fast.  In fact, in our experiments, solving the sparse, symmetric eigenproblem (\ref{weakeigen_galerkin}) for the eigenvalues near to zero, while also fast, is the most time-consuming step.
We emphasise that no computation of the transfer operator is required as it is implicit in the trajectory information.
%\todo[inline]{In the non-volume-preserving case or general weight $\mu^0$ case, we should estimate $\mu^t$ and compute $D^t$ and $M$ with it;  this is perhaps time-consuming. Perhaps the quickest way would be to use the fact $h_{\mu^t}(x)=h_{\mu^0}(\Phi^{-t}x)/|\det D\Phi^t(\Phi^{-t}x)|=h_{\mu^0}(\Phi^{-t}x)\cdot|\det D\Phi^{-t}(x)|$.  I would guess that the results are not too sensitive to errors with this.}

\subsubsection{Computing with incomplete trajectory data}
\label{sec:missing}

We now consider the situation where we have trajectory data $\{x_i^t\}, i=1,\ldots,n, t=1,\ldots,T$, but some of these data points are ``empty'' (we don't have the position data at certain times).
We discuss how to modify the method of section \ref{sec:colladap} to handle this situation.
Let $\cT_i=\{t\in\mathcal{T}: x_i^t\mbox{ exists}\}$ and $I_t=\{i\in \{1,\ldots,n\}: x_i^t\mbox{ exists}\}$.
Respectively, these sets are the \emph{times} at which we have positions for particle $i$, and \emph{indices} of those particles for which we have positions at time $t$.
We will need to compare distances between trajectories, so we assume that $\cT_i\cap \cT_j\neq \emptyset$ for all pairs $1\le i,j\le n$;  that is, for each point pair $(i,j)$ there exists at least one time $t$ (depending on the pair $(i,j)$) for which $i,j\in I_t$.
Note that this in particular includes situations where there is no single time at which all data pairs $i,j$ are available.

%I am thinking of adapting the third TO method.
%, though it should also work for the first and second TO methods.
At each $t\in \mathcal{T}$ we triangulate the points $\{x_i^t :i\in I_t\}$.  Using the corresponding basis $\{\varphi_i^t:i\in I_t\}$, we compute ${D}^t$  as
\begin{equation}
\label{Dmissing}
D^t_{ij}:=\left\{
            \begin{array}{ll}
              \int_{\Phi^t(\M)} \nabla\varphi_i^t\cdot \nabla\varphi_j^t\ d\ell, & \hbox{$i,j\in I_t$}, \\
              0, & \hbox{otherwise.}
            \end{array}
          \right.
\end{equation}
%We now form $$\mbox{(LHS of (\ref{weakeigen_galerkin}))$_{ij}$}:=\bar{D}:=\sum_{t\in T_i\cap T_j}\hat{D}^t_{ij}.$$
%Note that this averaged matrix $\bar{D}$ is symmetric.

%Now we combine the various $\hat{D}^t$ for $t\in \mathcal{T}$ as per the LHS of (\ref{weakeigen_galerkin}), except that instead of uniformly dividing by $|\mathcal{T}|$, we compute elementwise:
%$$\mbox{(LHS of (\ref{weakeigen_galerkin}))$_{ij}$}:=\frac{1}{\#(T_i\cap T_j)}\sum_{t\in T_i\cap T_j}\hat{D}^t_{ij}.$$
%\todo{Oliver, what did you end up using to replace the $\frac{1}{\#(T_i\cap T_j)}$ term?  I recall you said you just used a constant, but what constant?  This will affect the value of $\lambda$.}
%Thus we compute a time-averaged value of the stiffness matrix for each pair $(i,j)$.
%Note that this time-averaged $\hat{D}$ is symmetric.

For the mass matrix $M$, first note that
\[
\int_\M \varphi_i^0\cdot \varphi_j^0\ d\ell=\int_{\Phi^t(\M)} \varphi^0_i\circ\Phi^{-t}\cdot \varphi^0_j\circ\Phi^{-t}\ d\ell.
\]
%so that $M = \frac{1}{|\cT|}\sum_{t\in \cT} M = \frac{1}{|\cT|}\sum_{t\in \cT} M^t$.
%If we can find a single time $t$ at which all pairs $i,j$ are present then a natural thing to do would be to compute
Motivated by this, we set
\begin{equation}
\label{eqn25a}
M^t_{ij}:=\left\{
            \begin{array}{ll}
              \int_{\Phi^t(\M)} \varphi_i^t\cdot \varphi_j^t\ d\ell, & \hbox{$i,j\in I_t$}, \\
              0, & \hbox{otherwise,}
            \end{array}
          \right.
\end{equation}
and use
\begin{equation}
\label{Mmissing}
M := \frac{1}{|\cT|}\sum_{t\in \cT} M^t.
\end{equation}
%and call $M=M^t$.
%What I suggest below is perhaps not the best or most efficient, but it is the least biased estimate I can think of.
%Choose a pair $i,j\in I_t$, and compute $$M^t_{ij}:=\int_{\Phi^t(M)} \varphi_i^t\cdot \varphi_j^t\ dm$$, and say that the last two display equations are about the same (I have no idea about the error in doing this, maybe you do).
The rest of the calculations proceed as in the previous sections.

In the non-volume-preserving case, replace $d\ell$ in (\ref{Dmissing}) and (\ref{eqn25a}) at index $t$ with $d\mu^t$.
We described how to compute the entries in $D^t$ in this case in Section \ref{sect:nvp}.
For $i,j\in I_t$ the entries of the time-dependent mass matrices can be approximated by
\begin{equation}
\label{eq:nvpM}
M_{ij}^t  = \int_{\Phi^t(\M)} \varphi_i\;\varphi_j\;d\mu^t
 \approx \int_{\Phi^t(\M)} \varphi_i\;\varphi_j\;\tilde h^t\; d\ell
 = \sum_{k=1}^n \frac{a_k}{\ell_k^t}\int_{\Phi^t(\M)} \varphi_i\;\varphi_j\; \varphi_k\; d\ell
\end{equation}
and the integrals $\int_{\Phi^t(\M)} \varphi_i\;\varphi_j\; \varphi_k\; d\ell$ can be computed exactly, using the fact that the $\varphi_i$ are affine on each triangle.

%One may interpret the above as replacing (\ref{weakeigen_galerkin}) with
%$$\sum_{j=1}^n\left(\sum_{t\in T_i\cap T_j}\left(\hat{D}^t_{ij}-\lambda M^t_{ij}\right)\right)u_j=0\quad,i=1,\ldots,n.$$
%One may also include the term $1/\#(T_i\cap T_j)$ inside the $t$-summations of the definitions of $\bar{D}_{ij}$ and $\bar{M}_{ij}$.
%This would treat all \emph{trajectories} with equal importance, regardless of the number of data points they possess.
%By  not including this term (as done above), we give all \emph{data points} equal importance, and in numerical experiments have found this to give superior results. %\todo[inline]{I thought about this a little bit more and I now think that this reweighting does not really make sense. Without it, we have some approximation of the Laplacian for each t which is consistent with the triangulation at t with some points missing.  If we now reweight with $1/\#(T_i\cap T_j)$, this interpretation goes away. I would rather completely remove this paragraph.}

\vspace{.3cm}

\noindent\textbf{Algorithm 2:} (Input -- incomplete trajectories $\{x_i^t\}_{t\in \mathcal{T}}, i=1,\ldots,n$;  Output -- approximate eigenfunctions of dynamic Laplacian)
\begin{enumerate}
 \item for each $t\in \mathcal{T}$,
 \begin{enumerate}
  \item  triangulate  the set of nodes $\{x_i^t:i\in I_t\}$ (Delaunay or alpha complex),
 \item compute ${D}^t$ as per (\ref{Dmissing}) or the equivalent version of (\ref{Dweight}), as appropriate,
 \item compute $M^t$ as per (\ref{eqn25a}) or (\ref{eq:nvpM}), as appropriate,
     %the equivalent version of (\ref{Dweightedeqn}), as appropriate.
     %, set $M=\frac{1}{|\cT|}\sum_{t\in\cT} M^t$,
     \end{enumerate}
\item form $\frac{1}{|\mathcal{T}|}\sum_{t\in\mathcal{T}}D^t$ and use $M$ from (\ref{Mmissing}) to solve the eigenproblem (\ref{weakeigen_galerkin}).
 \end{enumerate}

\subsubsection{Discussion of the three approaches}

The three variants of the FEM approach to the eigenproblem (\ref{eq:eigenproblem})  in the preceeding sections come with different advantages/disadvantages which we now briefly discuss.

The Cauchy-Green approach from Section~\ref{sec:CG} only requires a single triangulation of the underlying manifold $\M$, avoiding potential complications with a complicated geometry of the image manifold(s) $\Phi^t(\M)$.
%It needs an estimate of the spatial derivative and so this approach is only suited to cases where this is possible,
Additionally, this leads to fewer nonzero entries in the stiffness and mass matrices in comparison to the other two methods, so that the eigenproblem is solved faster (cf.\ the experiments in the next section).  It exploits higher order information on $\Phi^t$ via the Cauchy-Green tensor, although this comes at a higher computational cost since $D\Phi^t$ has to be computed at every quadrature node (of which typically several are required within each mesh element).  Also, for highly nonlinear flows, $D\Phi^t$ tends to vary very rapidly in space, which sometimes renders the quadrature on the elements challenging (cf.~the experiment on the rotating double gyre in Section~\ref{exp:rotating_double_gyre}).

In the transfer operator approach (Section~\ref{sec:collnonadap}), we may use a single triangulation only in the case that $\M=\Phi^t(\M)$, since then we can choose the same basis of $V_n^t$ for each $t\in\mathcal{T}$.
On the other hand, a major advantage is the removal of the need to compute the derivative of the flow map.
Instead, we need to evaluate the inverse flow map, which is possible if $\{x^t_m\}_{t\in\mathcal{T}}$ arises as a trajectory, or in other settings where the preimages are known or can be easily computed.  The matrices $D_{kl}^t$ and $(\alpha_k^i)_{ik}$ are sparse with the number of nonzero entries per row/column determined by how many elements support a basis function (six in the case of triangular elements in 2D), although their product $\hat D^t$ may have considerably more nonzero entries than $D_{kl}^t$ and $(\alpha_k^i)_{ik}$, which would make solving the eigenproblem more expensive.

In the adaptive transfer operator approach (Section~\ref{sec:colladap}), we need to construct a separate triangulation for every $t\in \mathcal{T}$ (even in the case $\M=\Phi^t(\M)$).
For each of our examples we find $|\mathcal{T}|=2$ already yields good results. There is \emph{no estimation of the transfer operator} through the $(\alpha_k^i)_{ik}$, so the stiffness matrix calculation is completely standard for FEM.  Further, the only input data are the positions of the nodes at each time instance, which, e.g., is readily available in most experimental or real-world settings.  The number of nonzeros of the overall stiffness matrix $\sum_t \hat D^t$ grows linearly with $|\mathcal{T}|$, so that the solution of the eigenproblem becomes more expensive.

\subsection{Extraction of (multiple) coherent sets from eigenfunctions}
\label{sec:extract}

By construction of the dynamic Laplacian, and the theory in Section \ref{sec:background} and the references therein, an optimal 2-partitioning of the domain into 2 coherent sets is given in terms of level sets of the second eigenfunction (often corresponding to values near zero).
Extending this idea heuristically to multiple coherent sets, one searches for multiple regions defined by several level sets.
A straightforward way to extract these is by computing an optimal level set for each eigenfunction, cf.~\cite{F15,FJ15}.
In general, this does not guarantee that the resulting coherent set is connected, e.g.~it might be the union of two vortices in a fluid flow.
This is similar to the situation in the context of time-independent systems where one is interested in \emph{almost-invariant} sets, methods based on the zero level set \cite{DJ99} (the sign structure) or other thresholds \cite{F05} of the relevant eigenvectors.

Various approaches for the extraction of ``individual'' coherent sets have been proposed:  For almost-invariant sets, a least-squares assignment strategy \cite{DeHuFiSc00a} have been proposed.  Later, techniques drawn from spectral graph partitioning \cite{chan,alpert} based on clustering embedded eigenvectors via fuzzy c-means clustering \cite{Froyland:2003jj,F05} have been proposed.  In our examples, we use a variant of this latter approach by employing MATLAB's \texttt{kmeans} command.

In many cases, gaps in the spectrum of the dynamic Laplacian can be used in order to obtain a heuristic on the number of coherent sets: If there is a large gap after the $k$-th eigenvalue, then one should search for $k-1$ connected coherent sets\footnote{for Neumann boundary conditions, since the constant eigenfunction at $\lambda=0$ does not contribute.}.  This heuristic is motivated by perturbation arguments for the spectra of linear operators, cf.\ \cite{DJ99,F05,KK16}.
%Note, however, that (like in the case of the static Laplacian) higher harmonics of eigenfunctions may appear at eigenvalues smaller in magnitude than those associated to coherent sets of smaller volume \cite{KK16} so that by this heuristic one might not be able to identify all coherent sets (cf.~the experiment in Section~\ref{exp:ocean_CG}).

\subsection{Link with graph-based methods}
\label{sec:graph}

The transfer operator approach of \S\ref{sec:colladap} can be interpreted in terms of graph Laplacians.
%We interpret the transfer operator approach of section \ref{sec:colladap} in terms of graph Laplacians.
For any $t$ the matrix $\hat{D}^t$ from (\ref{Deqn}) or (\ref{Dweightedeqn}) is clearly symmetric, and also has zero column and row sums.
For the latter property, note that summing both sides in (\ref{Deqn}) over $j$ yields
$$\sum_j \hat{D}^t_{ij} = \int_{\Phi^t(\M)}\nabla\left(\Phi^t_{*}\left(\sum_j\varphi_j\right)\right)\bullet\nabla\left(\Phi^t_{*} \varphi_i\right)\ d\ell=0,$$
as $\sum_j \varphi_j\equiv \mathbf{1}$ by construction (and $\Phi^t_*\mathbf{1}=\mathbf{1}$ and $\nabla\mathbf{1}=\mathbf{0}$).

Moreover, the diagonal entries of $\hat{D}^t$ are clearly positive.
It is known (see Proposition 3 \cite{vanselow}, also p5 \cite{alouges}) that the off-diagonal entries of $\hat{D}^t$ are non-positive for two-dimensional Delaunay meshes and for three-dimensional Delaunay meshes when all dihedral angles of the tetrahedra are less than $\pi/2$.
%In the case of the standard two-dimensional triangular mesh on a uniform grid of points it is not hard to see that the entries are \emph{all} non-positive, due to the opposing gradients of neighbouring hat functions sharing a triangular mesh element.
Indeed, in all of our numerical experiments, including in three dimensions, we observe all interior triangles/tetrahedra yield non-positive entries.
%\todo[inline]{GF: This is bugging me.  I decided to prove it, starting in 2D and found that there are 12 possible scenarios for a single mesh element (triangle). 6 of these are for elements with all angles below $\pi/2$ and 6 for one angle above $\pi/2$ (I didn't check yet the non-generic one angle is exactly $\pi/2$ case).  But it seems that in 4 out of 6 cases (for each 6), the stiffness matrix entry should be negative, while in 2 out of 6 it should be positive.  This doesn't seem to match our numerics.  Let me give you one problematic case:  consider a triangle with all angles less than $\pi/2$ and suppose that nodes $i$ and $j$ are the endpoints of the longest edge of this triangle.  Then the gradient vectors for $\phi_i$ and $\phi_j$ will point from the node opposite this longest edge toward the other two nodes.  The angle between these two vectors is by definition less than $\pi/2$ and so the dot product will be positive.  Why don't we see this in the numerics?  Hopefully you can spot a mistake.}
%For irregular meshes with very thin triangles at the boundary, we have found there can be a small number of $(i,j)$ pairs for which $\hat{D}^t_{ij}>0$, where both $i$ and $j$ are nodes on the boundary of the triangulated mesh.
In view of the above, we proceed under the assumption that all off-diagonal entries of $\hat{D}^t$ are all non-positive for the purposes of relating our numerical eigenproblem to a graph Laplacian eigenproblem.

Consider the graph $G^t$ which is our triangulation of images of data points at time $t$.
Because of the symmetry, zero column sum, and sign structure of $D^t$ discussed above, we can write $\hat{D}^t=\Pi^t-W^t$, where
$$W^t_{ij} = \left\{
            \begin{array}{ll}
              -\hat{D}^t_{ij}, & \hbox{$i\neq j$,} \\
              0, & \hbox{$i=j$,}
            \end{array}
          \right.$$
are the \textit{weights} assigned to edges $(i,j)$ in our graph $G^t$, and $\Pi^t_{ii}=\hat{D}^t_{ii}$ is a diagonal matrix with the $i^{\rm th}$ diagonal element equal to the \textit{degree} of node $i$ in $G^t$, namely $\Pi^t_{ii}=-\sum_j \hat{D}^t_{ij}$.
Thus, we can immediately interpret $D^t$ as an (unnormalized) graph Laplacian \cite{Chung}.

We now interpret these off diagonal arc weights in terms of the trajectory dynamics.
Suppose that at time $t$, the positions of two nearby, distinct, evolved data points $\Phi^t(x_i), \Phi^t(x_j)$ are sufficiently close that $(i,j)$ is an arc in $G^t$ (this means $\Phi^t(x_i), \Phi^t(x_j)$ are sufficiently close that the supports of $\varphi^t_i$ and $\varphi^t_j$ intersect).
Then the corresponding entry of $\hat{D}^t_{ij}$ will be negative
and will decrease in magnitude as the distance between these points increases.
This is because the integrand in (\ref{Deqn}) or (\ref{Dweightedeqn}) is proportional to a product of two approximately linearly decreasing gradients, which outweighs the approximately linear increase in the integration domain given by the mesh volume.
If we continue to increase the distance between $\Phi^t(x_i), \Phi^t(x_j)$, eventually the arc $(i,j)$ will no longer exist in $G^t$ because the Delaunay mesh will find a more efficient way to mesh the points.
%and to move from $\Phi^t(x_i)$ to $\Phi^t(x_j)$ will require traversing more than one
%edge in $G^t$.
In summary, we see that the edge weights given by $\hat{D}^t_{ij}$ reflect distance between nearby points $\Phi^t(x_i), \Phi^t(x_j)$.

When we solve the problem $-\frac{1}{|\mathcal{T}|}\sum_{t\in\mathcal{T}} \hat{D}^{t}y=\lambda My$, we are averaging the graph Laplacians, normalized
by the symmetric, nonnegative mass matrix $M$.
There are two differences in this normalization to the standard graph Laplacian normalization, which is based simply on node degree.
Firstly, $M$ is not diagonal, although in any given row or column, the largest entry will lie on the diagonal.
Moreover, the small number of off-diagonal entries of $M$ coincide with the off-diagonal entries of $\hat{D}^0$ and correspond to arcs $(i,j)$ that are in the graph $G^0$.
Returning to the interpretation of off-diagonal entries as distances $\hat{D}^t_{ij}$, we see that normalizing by the mass matrix \emph{automatically handles nonuniformly distributed data}, because if initial points $x^0_i, x^0_j$ are far apart the value of $M_{ij}$ will be commensurately larger;  see Figure \ref{fig:Meiss_TO_sampling} for a dramatic illustration of this automatic handling.
This is in contrast to e.g.\ the approaches of \cite{FPG15,HKTH16}, where only the linear distance between points is taken into account, and not additionally the volume in phase space ``represented'' by those points.
This is important in the case where the data is not uniformly distributed in space, such as e.g.\ trajectory data from ocean drifters.

\section{Numerical experiments}
\label{sec:numerical}

In all subsequent experiments, we employ standard linear triangular/tetrahedral elements (i.e.~$P_1$ Lagrange elements) and an explicit Runge-Kutta scheme of order (4,5) with adaptive step size control as implemented in MATLAB's \texttt{ode45} integrator, using the relative and absolute error tolerance $10^{-3}$.  The derivative of the flow map is approximated using central finite differences.

\subsection{Experiment: The rotating double gyre}

\label{exp:rotating_double_gyre}

We start with the rotating double gyre flow introduced in
\cite{MoMe11a}, a Hamiltonian system with Hamiltonian $H=-\psi$, where $\psi$ is the stream function
\begin{align*}
\psi(x,y,t) &= (1-s(t))\psi_P(x,y) + s(t)\psi_F(x,y)\\
\psi_P(x,y) &= \sin(2\pi x)\sin(\pi y)\\
\psi_F(x,y) &= \sin(\pi x)\sin(2\pi y)
\end{align*}
and transition function
\[
s(t) = \left\{\begin{array}{cl} 0 & \text{for } t<0,\\ t^2(3-2t) & \text{for }t\in [0,1],\\ 1 & \text{for }t>1.\end{array}\right.
\]
On the square $\M=[0,1]^2$, the vector field exhibits two gyres with centers at $(\frac12,\frac12)$ and $(\frac32,\frac12)$ initially (at $t=0$) which rotate by $\pi/2$ during the flow time $T=1$.

\paragraph{Cauchy-Green approach.}

We use $\mathcal{T}=\{0,1\}$ and employ Gauss quadrature of degree 5 for the numerical integration of (\ref{eq:CG_1}).  A lower degree yields degraded results.  Indeed, as shown in Fig.~\ref{fig:Meiss_CG_tensor}, the (inverse) Cauchy-Green tensor is of quite large variation (cf.\ our earlier remarks).
\begin{figure}[htbp]
\begin{center}
\includegraphics[width=0.4\textwidth]{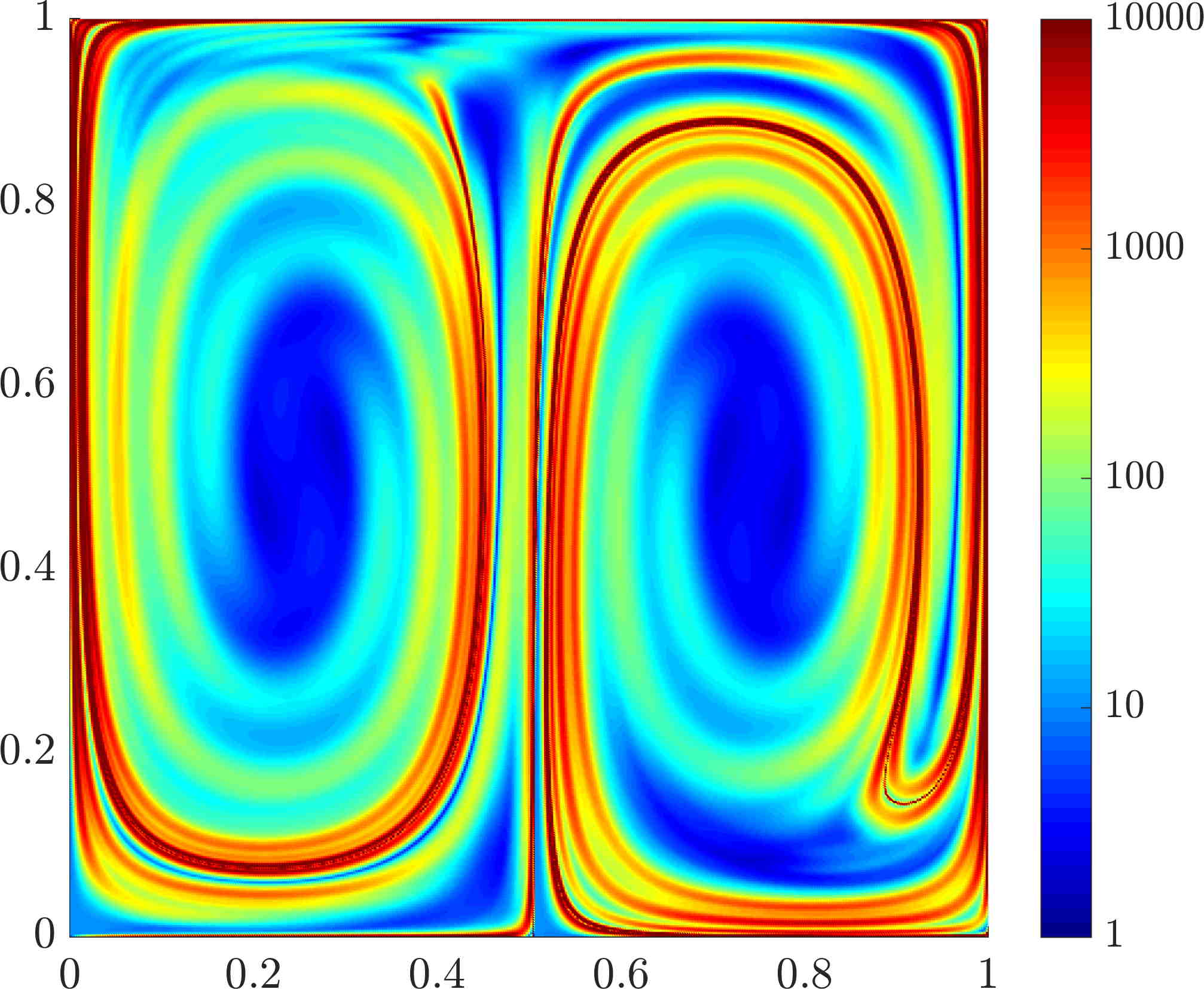}
\caption{Rotating double gyre: trace of the inverse Cauchy-Green tensor.}
\label{fig:Meiss_CG_tensor}
\end{center}
\end{figure}

 Figure~\ref{fig:Meiss_CG_spectra} shows the spectra on a triangulation of $25\times 25$ equidistant points (1152 triangles) as well as on the Delaunay triangulation of a set of 625 scattered nodes (1236 triangles).  There appears to be a gap after the fourth eigenvalue. The method of Section 3.2, however, has a gap after the third eigenvalue for both the regular and scattered data, and for this reason and to enable comparison with prior work \cite{FPG14}, we used the leading 3 eigenvectors in all cases. Figures~\ref{fig:Meiss_CG_u} and \ref{fig:Meiss_CG_r} show the 2nd and 3rd eigenvectors as well as the resulting decomposition into coherent sets obtained by kmeans-clustering of these eigenvectors as described in Section~\ref{sec:extract}.  Already at this comparatively low resolution, the results are broadly similar to those in \cite{FPG14}.  Note, however, that we need to evaluate $C_t^{-1}$ (for which we evaluate $D\Phi^t$ by finite differencing) at roughly $8000$ points, which takes around 2 seconds.  The assembly of the matrices takes 0.02 s and the solution of the eigenproblem 0.05 seconds\footnote{All timings measured on a dual core Intel i5 with 2.4 GHz and MATLAB R2016a.}.
\begin{figure}[htbp]
\begin{center}
\includegraphics[width=0.45\textwidth]{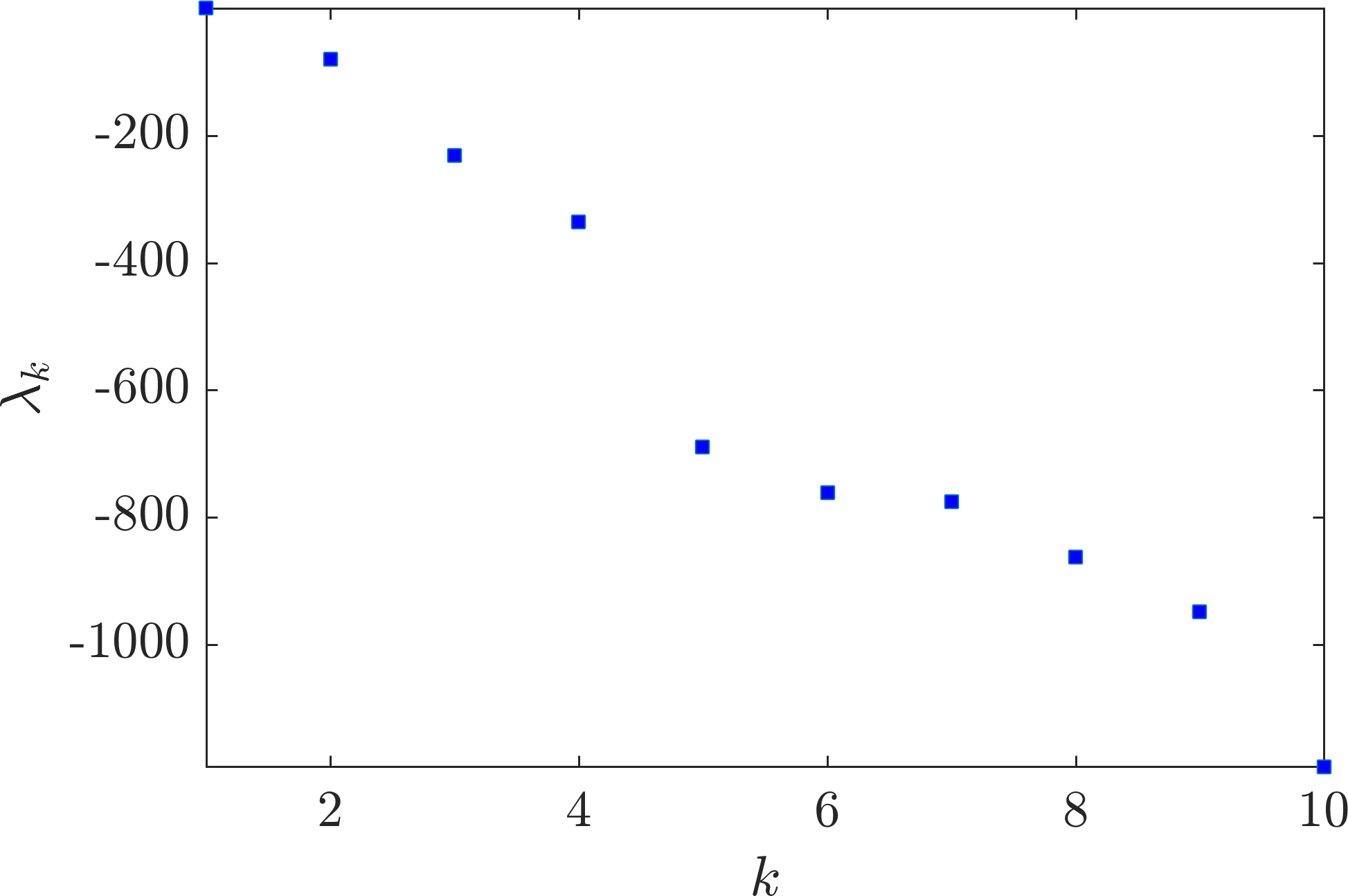}
\includegraphics[width=0.45\textwidth]{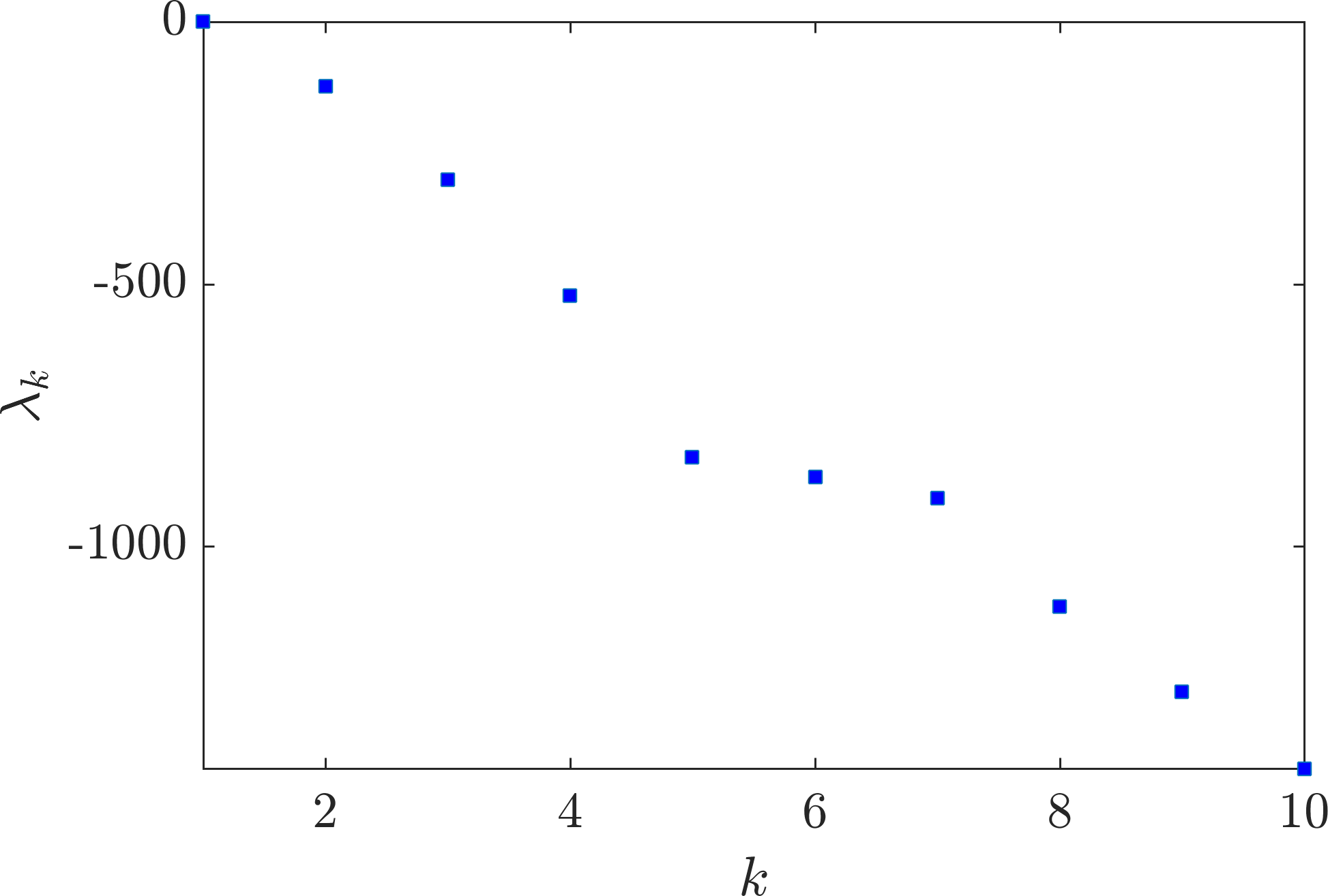}
\caption{Rotating double gyre: spectrum of the dynamic Laplacian on a triangulation of a regular $25\times 25$ grid (left) and of 625 scattered points (right) using the approach from Section~\ref{sec:CG}.}
\label{fig:Meiss_CG_spectra}
\end{center}
\end{figure}
\begin{figure}[htbp]
\begin{center}
\includegraphics[width=0.32\textwidth]{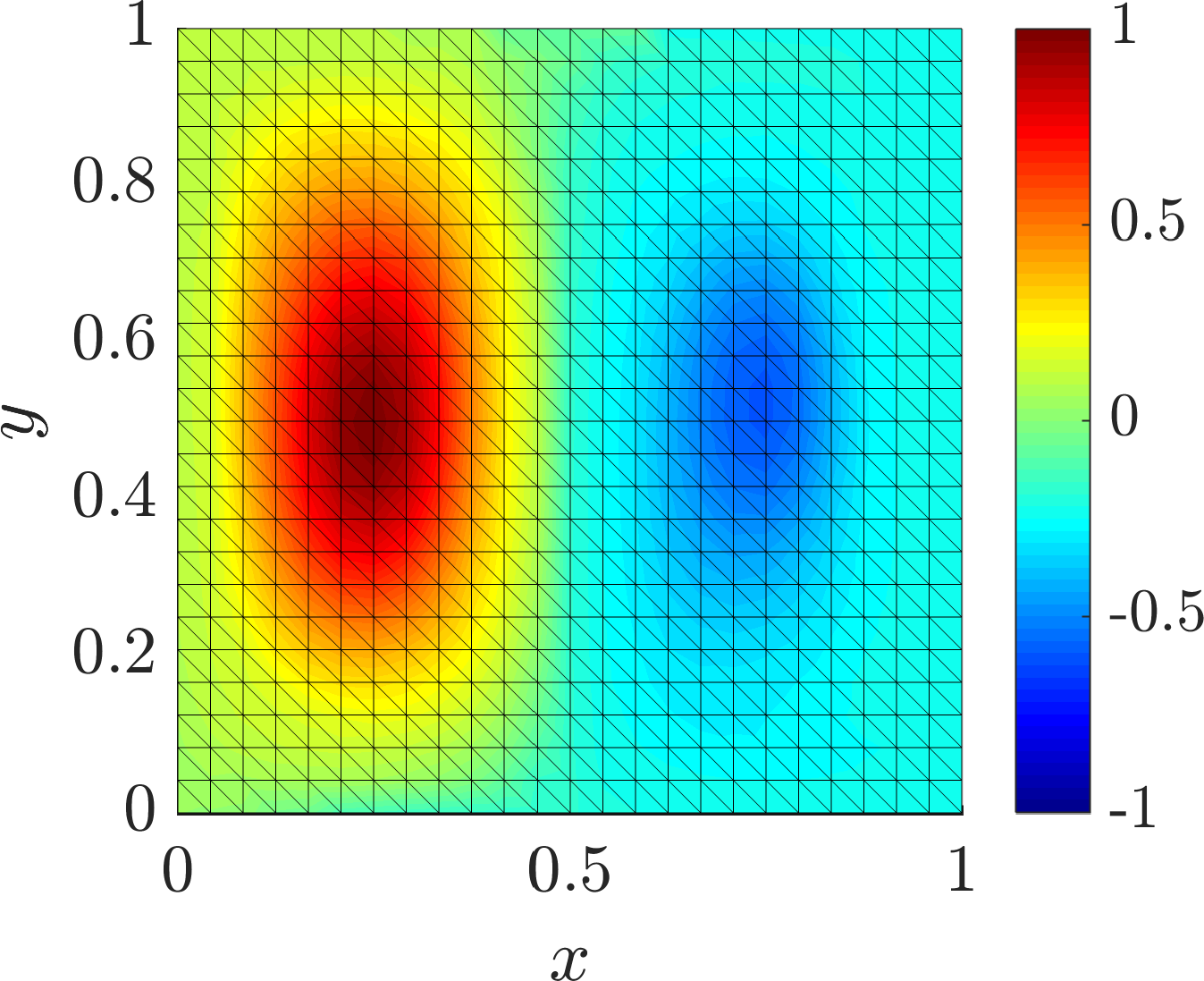}
\includegraphics[width=0.32\textwidth]{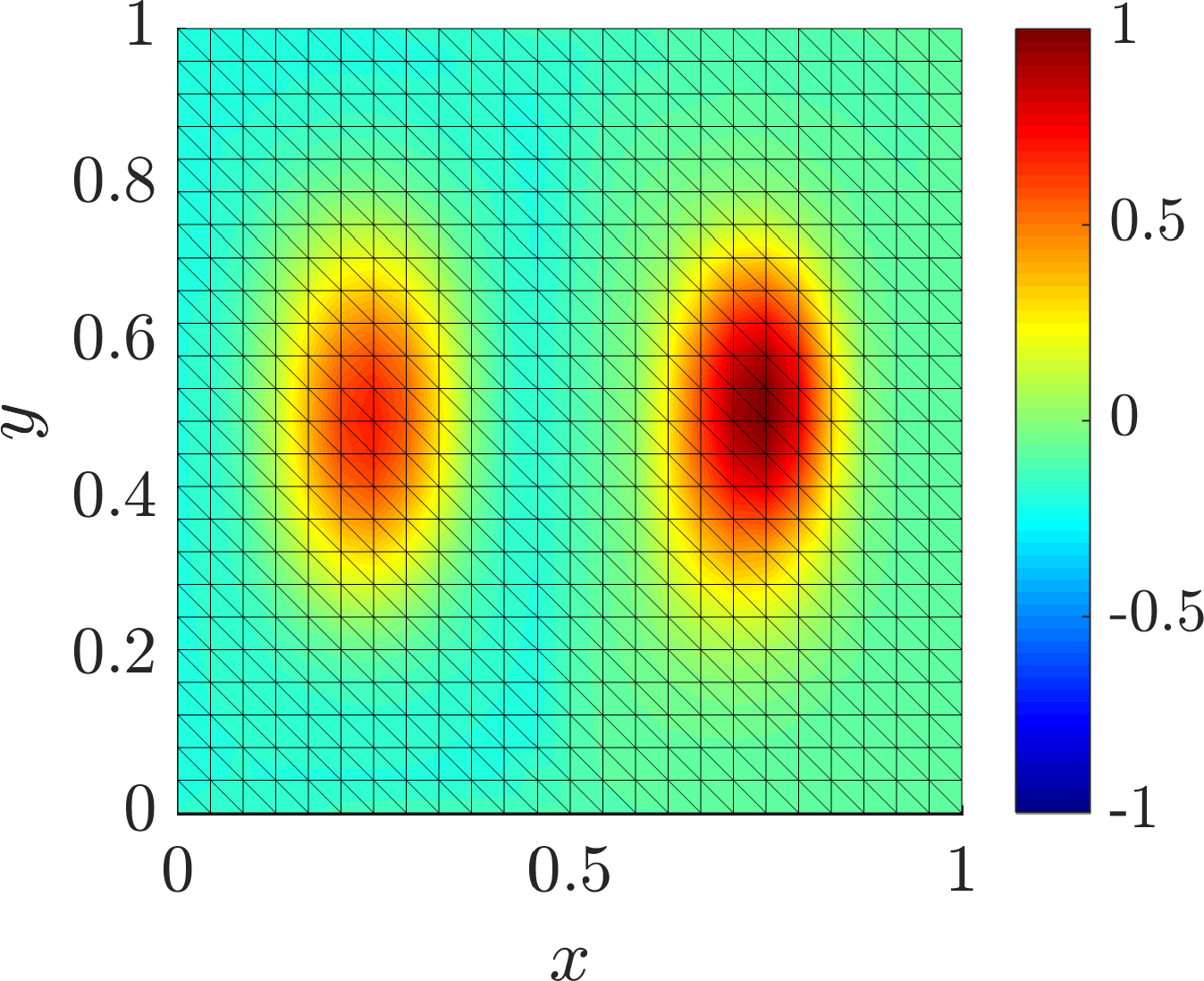}
\includegraphics[width=0.32\textwidth]{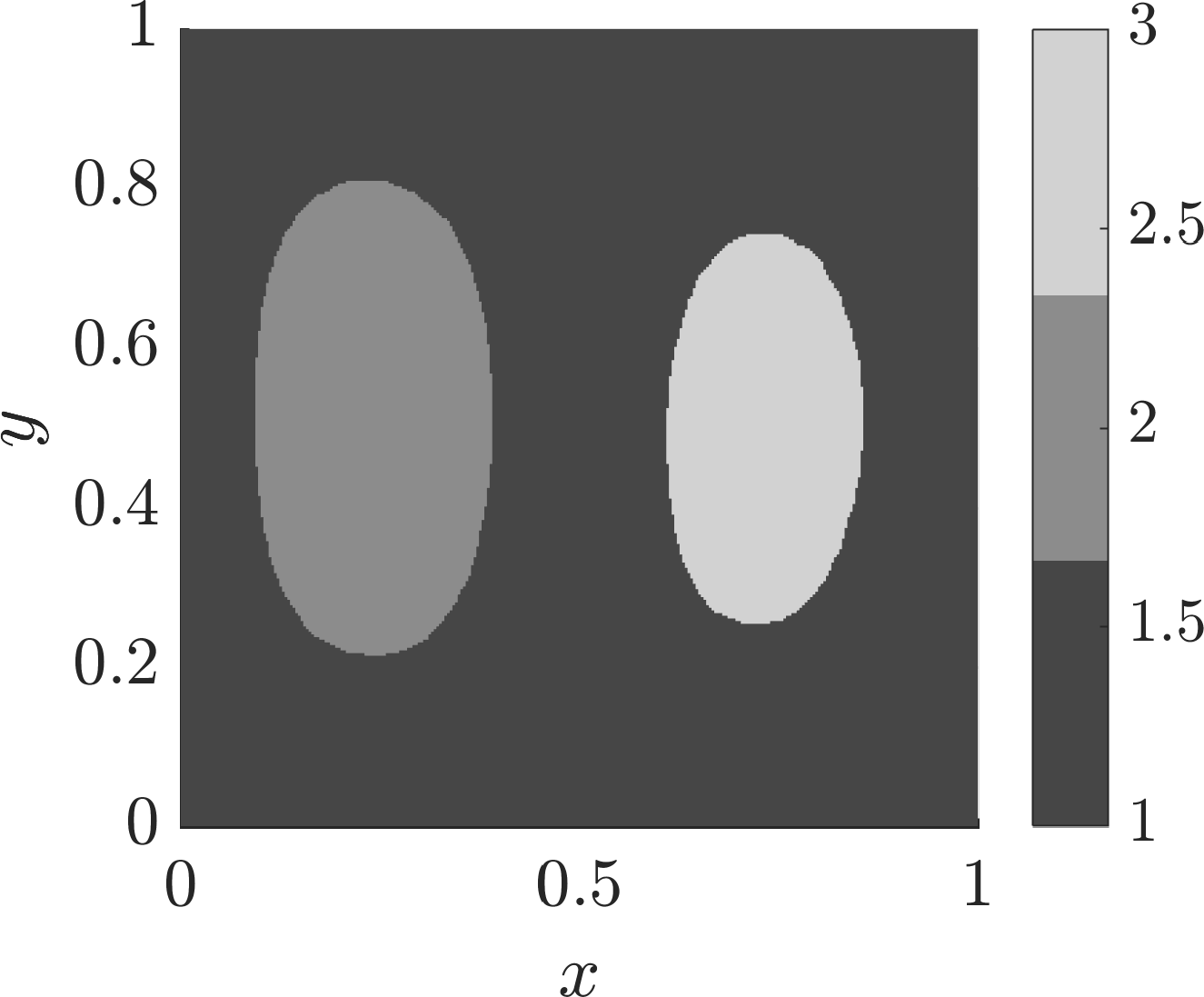}
\caption{Rotating double gyre: 2nd and 3rd eigenvector as well as the resulting coherent partition (from left to right) on a triangulation of a regular $25\times 25$ grid using the Cauchy-Green approach from Section~\ref{sec:CG}. The tensor $C_t^{-1}$ is evaluated at 8064 points.}
\label{fig:Meiss_CG_u}
\end{center}
\end{figure}
\begin{figure}[htbp]
\begin{center}
\includegraphics[width=0.32\textwidth]{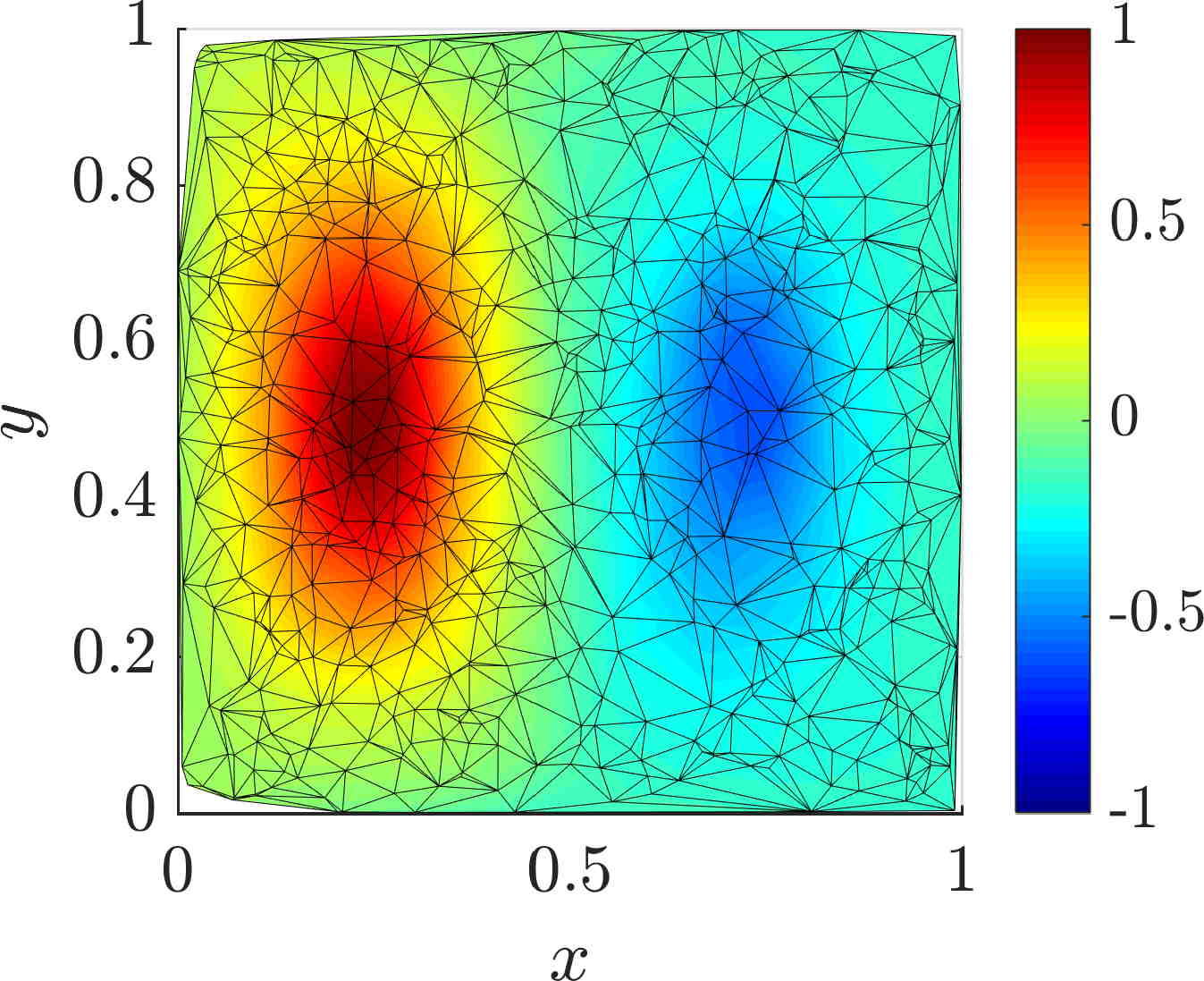}
\includegraphics[width=0.32\textwidth]{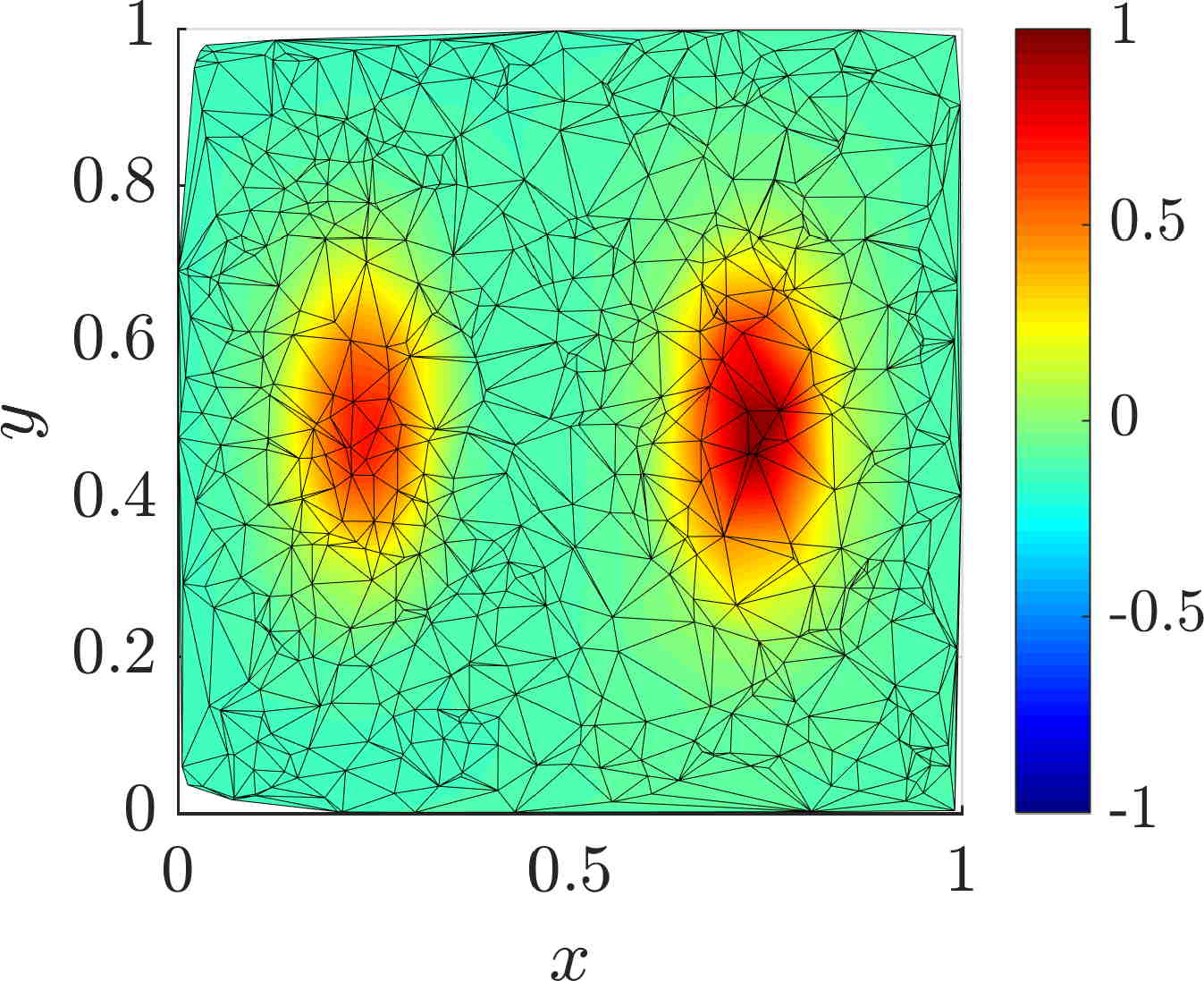}
\includegraphics[width=0.32\textwidth]{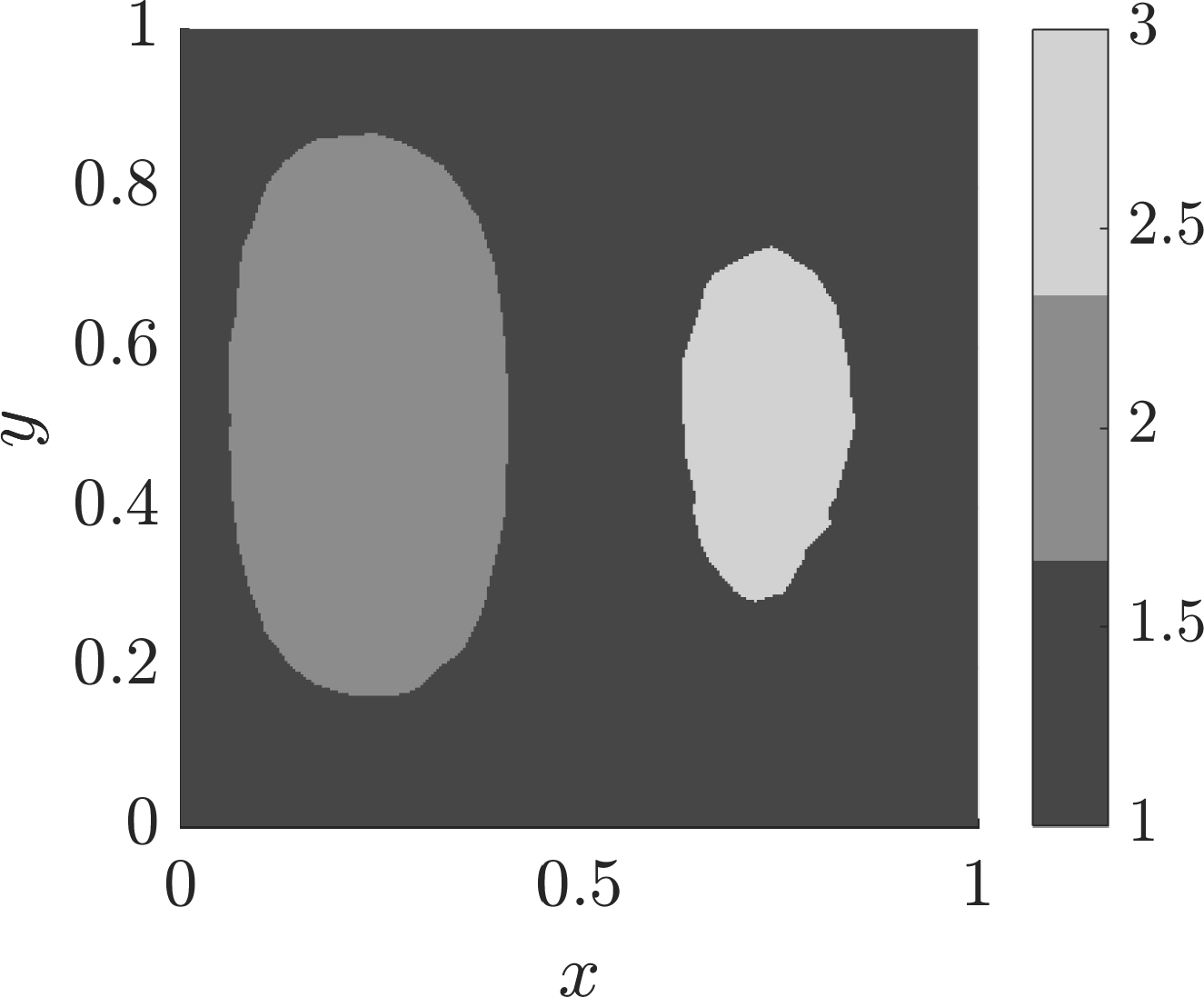}
\caption{Rotating double gyre: 2nd and 3rd eigenvector as well as the resulting coherent partition (from left to right) on a triangulation of a set of 625 randomly scattered points using the Cauchy-Green approach from Section~\ref{sec:CG}. The tensor $C_t^{-1}$ is evaluated at 8617 points.}
\label{fig:Meiss_CG_r}
\end{center}
\end{figure}

\paragraph{Transfer operator approach.}

We repeat the same experiment with $\mathcal{T}=\{0,1\}$ using the approach from Section~\ref{sec:collnonadap}.  Figure~\ref{fig:Meiss_TO_spectra} (left) shows the spectrum (with a small gap after the third eigenvalue), Figure~\ref{fig:Meiss_TO} the second and third eigenvectors as well as the resulting coherent 3-partition.  Here, the inverse flow map (without variational equation) had to be evaluated $25\cdot 25=625$ times only, which takes 0.1 s, the assembly of the matrices takes 0.02 s and the solution of the eigenproblem again 0.1 seconds.  Note that the only data that we input to our method is the initial and final positions of the 625 nodes.
\begin{figure}[htbp]
\begin{center}
\includegraphics[width=0.45\textwidth]{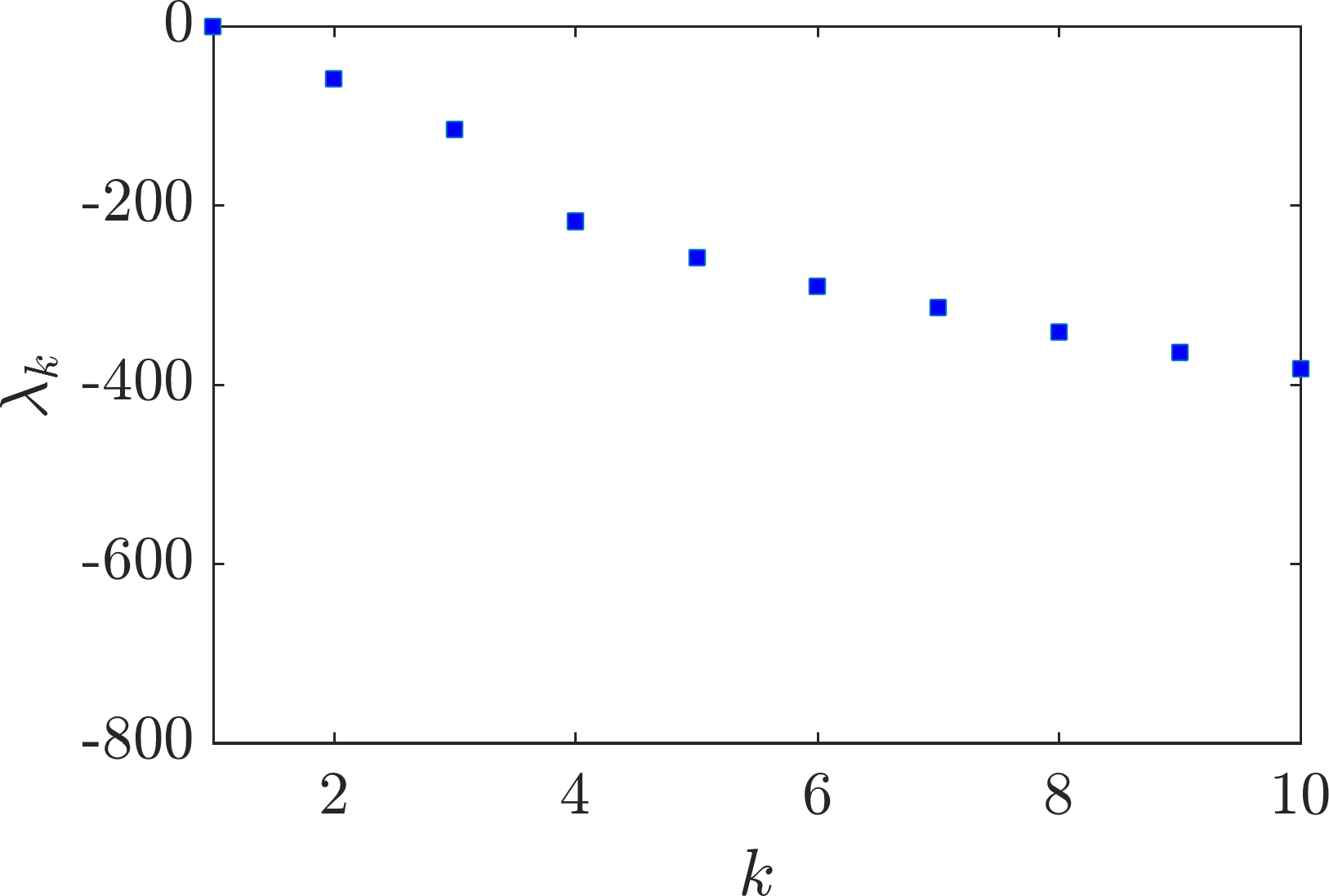}
\includegraphics[width=0.45\textwidth]{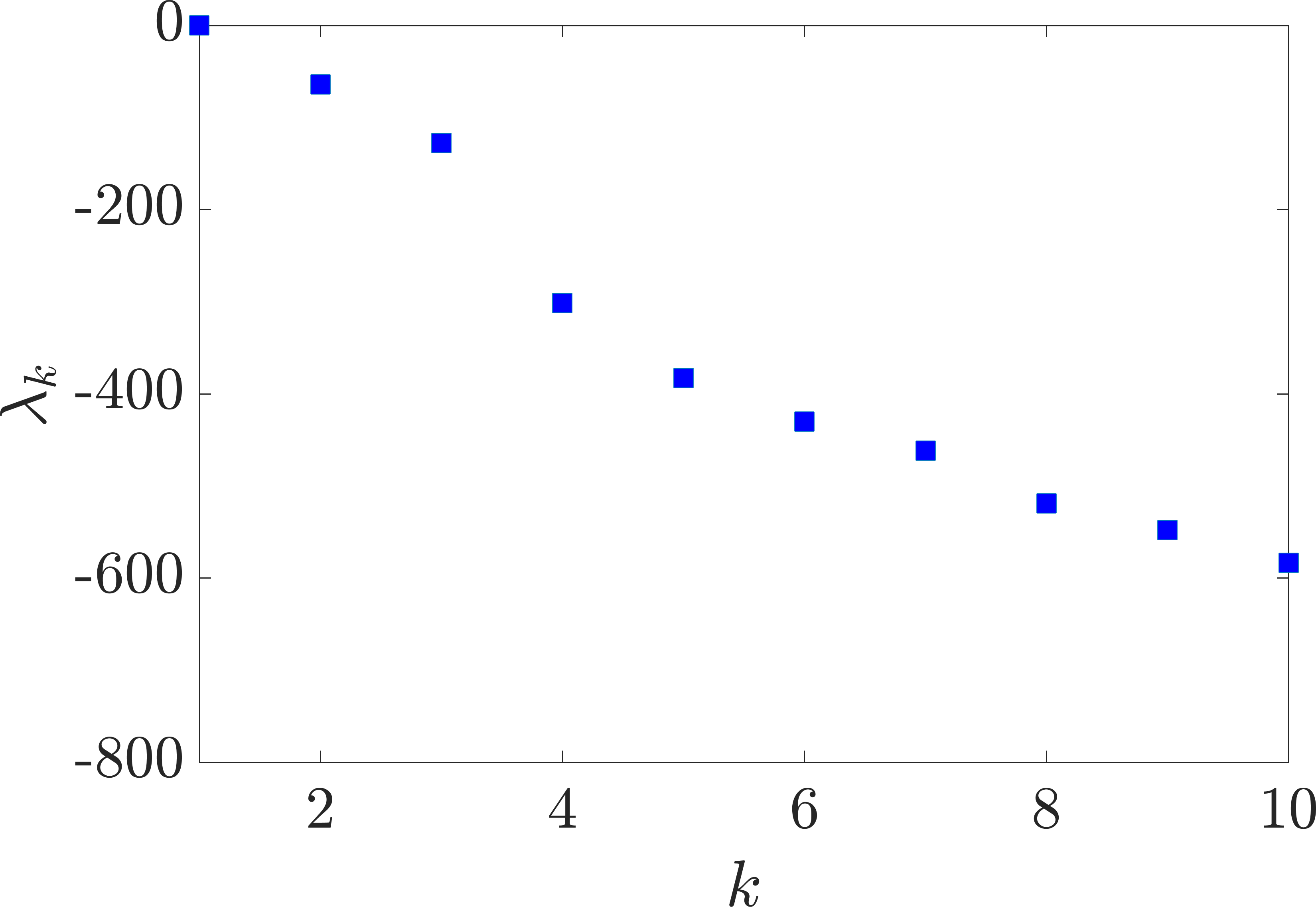}
\caption{Rotating double gyre: spectra of the dynamic Laplacian on a triangulation of a regular $25\times 25$ grid  using the transfer operator approach from Section~\ref{sec:collnonadap} (left) and from Section~\ref{sec:colladap}.}
\label{fig:Meiss_TO_spectra}
\end{center}
\end{figure}
\begin{figure}[htbp]
\begin{center}
\includegraphics[width=0.32\textwidth]{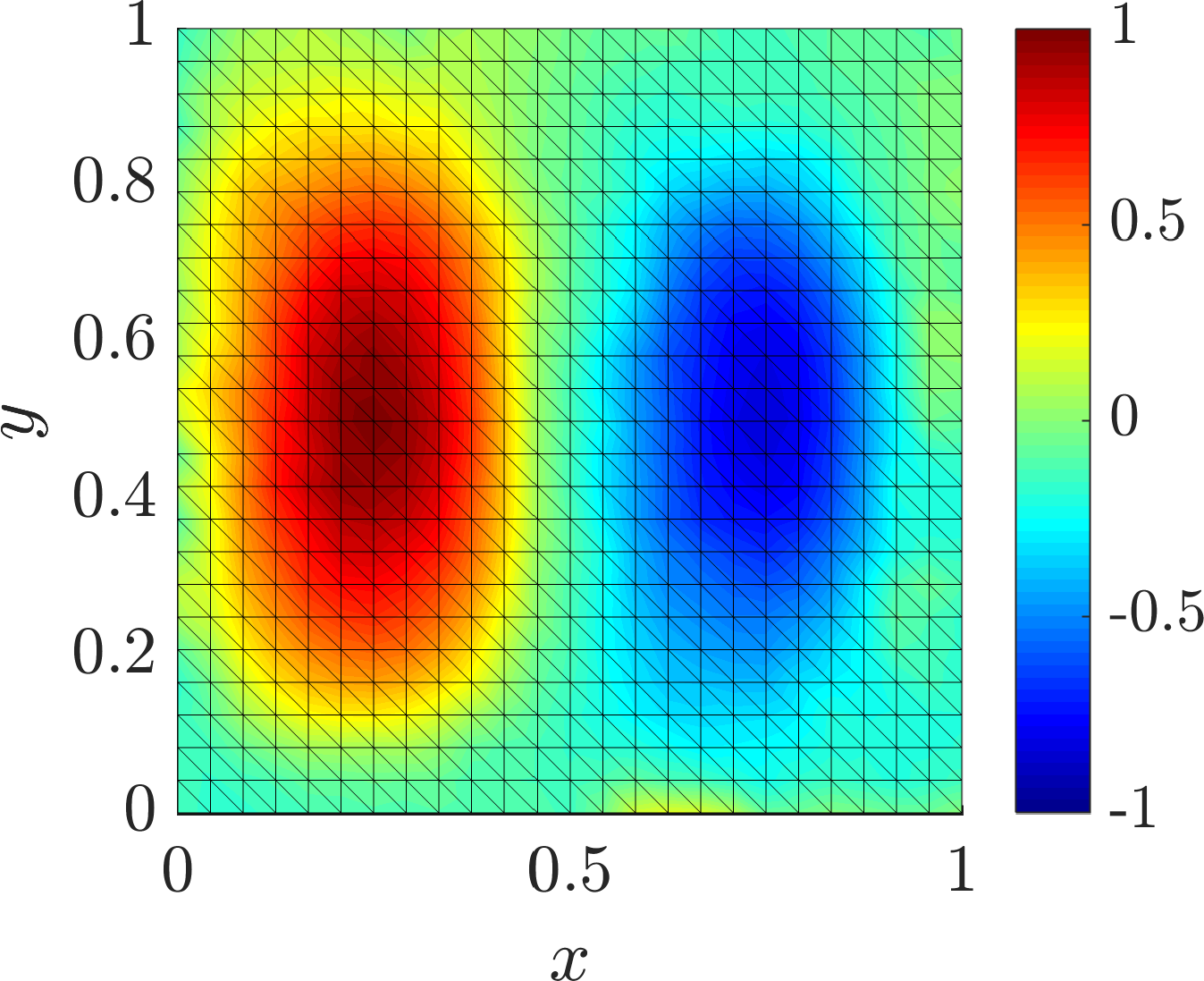}
\includegraphics[width=0.32\textwidth]{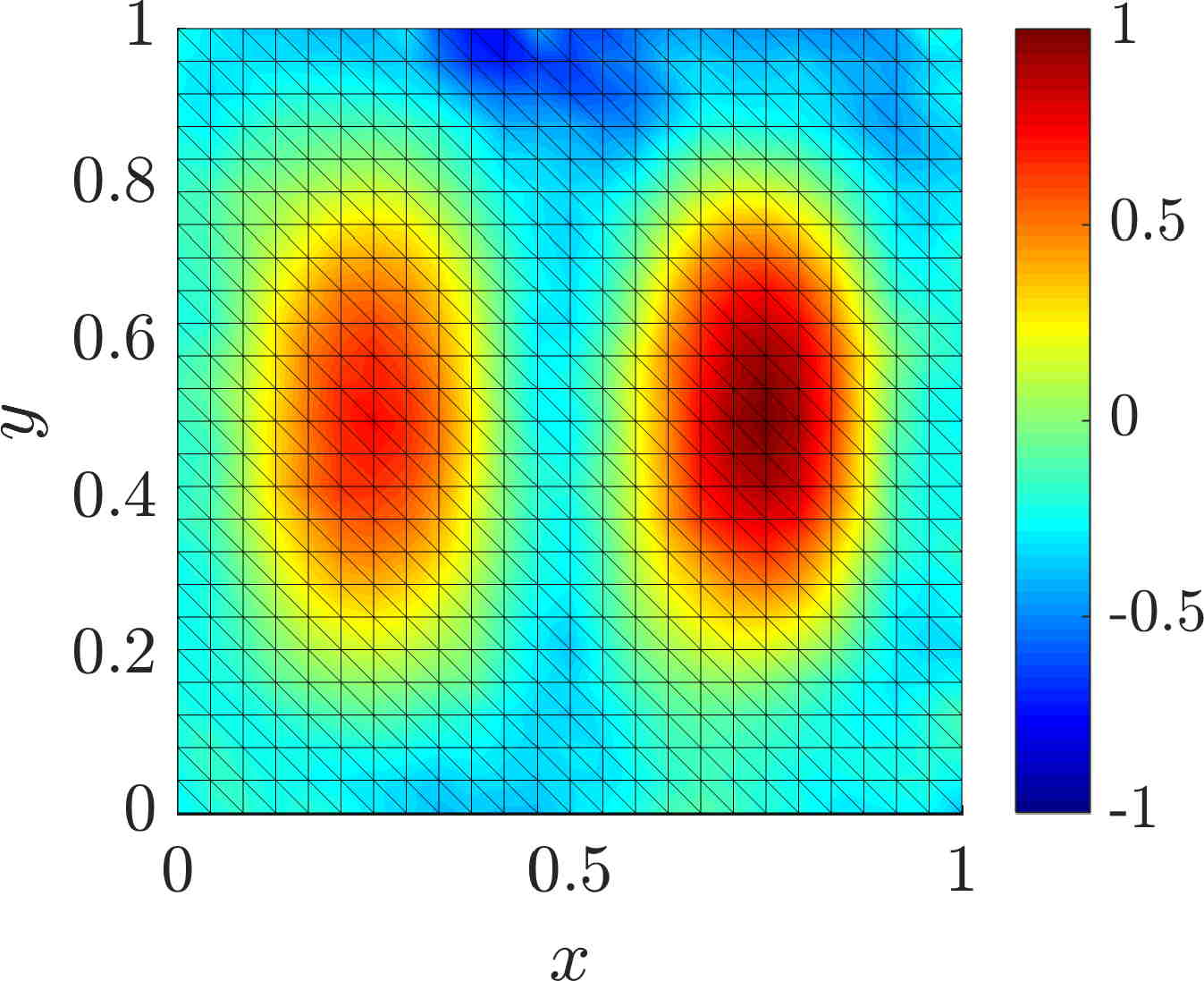}
\includegraphics[width=0.32\textwidth]{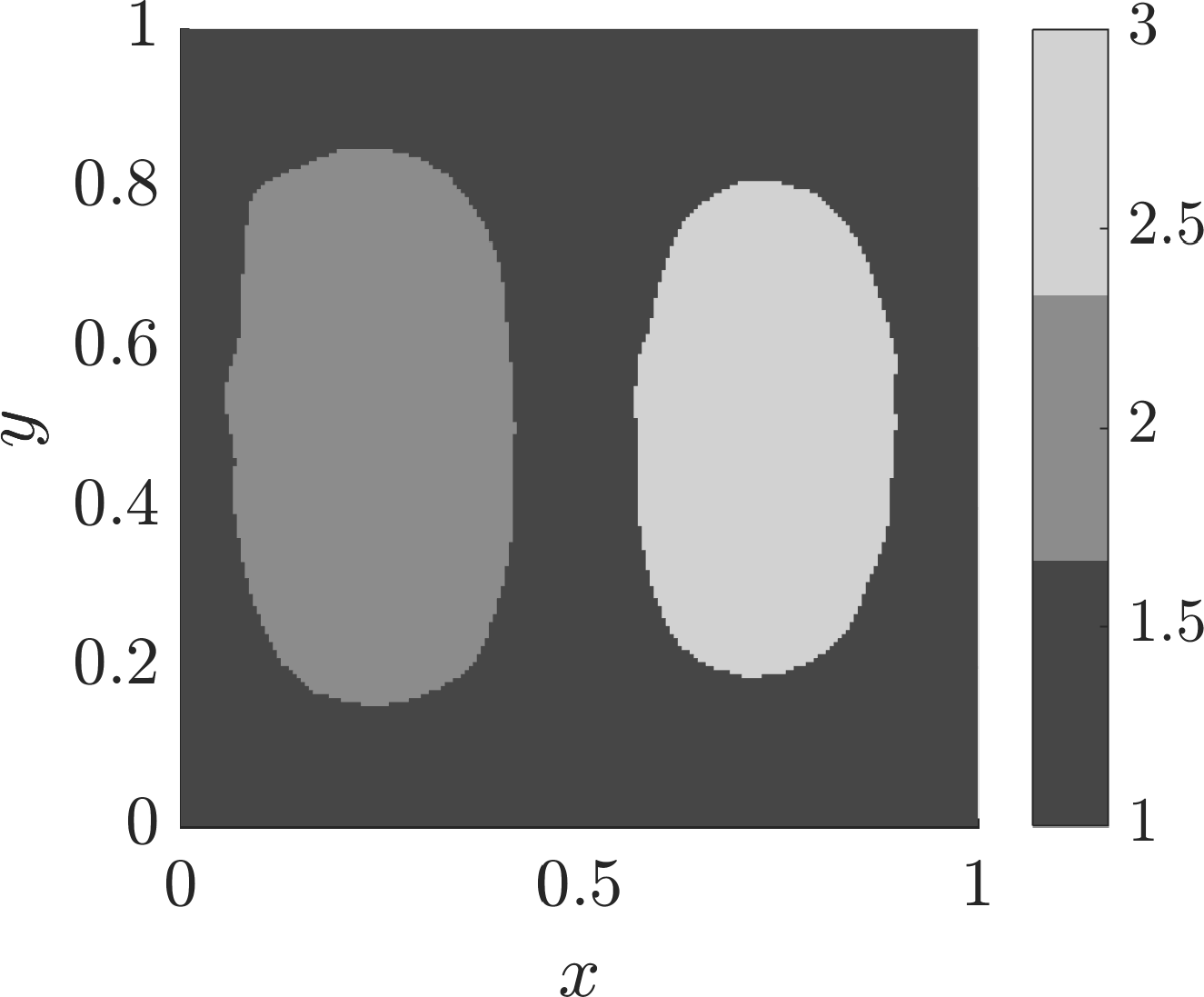}
\caption{Rotating double gyre: 2nd and 3rd eigenvector and coherent 3-partiton (from left to right) on a triangulation of a regular $25\times 25$ grid, using the transfer operator approach from Section~\ref{sec:collnonadap}.}
\label{fig:Meiss_TO}
\end{center}
\end{figure}

\paragraph{Adaptive transfer operator approach.}

Using the approach from Section~\ref{sec:colladap} on this experiment, we obtain the spectrum in Figure~\ref{fig:Meiss_TO_spectra} (right), with a gap after the third eigenvalue, and the corresponding eigenvectors and clustered coherent sets in Figure~\ref{fig:Meiss_TO_adap}. As for the (non-adaptive) transfer operator approach, the flow map (without variational equation) had to be evaluated $25\cdot 25=625$ times only, leading to the same computation times.
\begin{figure}[htbp]
\begin{center}
\includegraphics[width=0.32\textwidth]{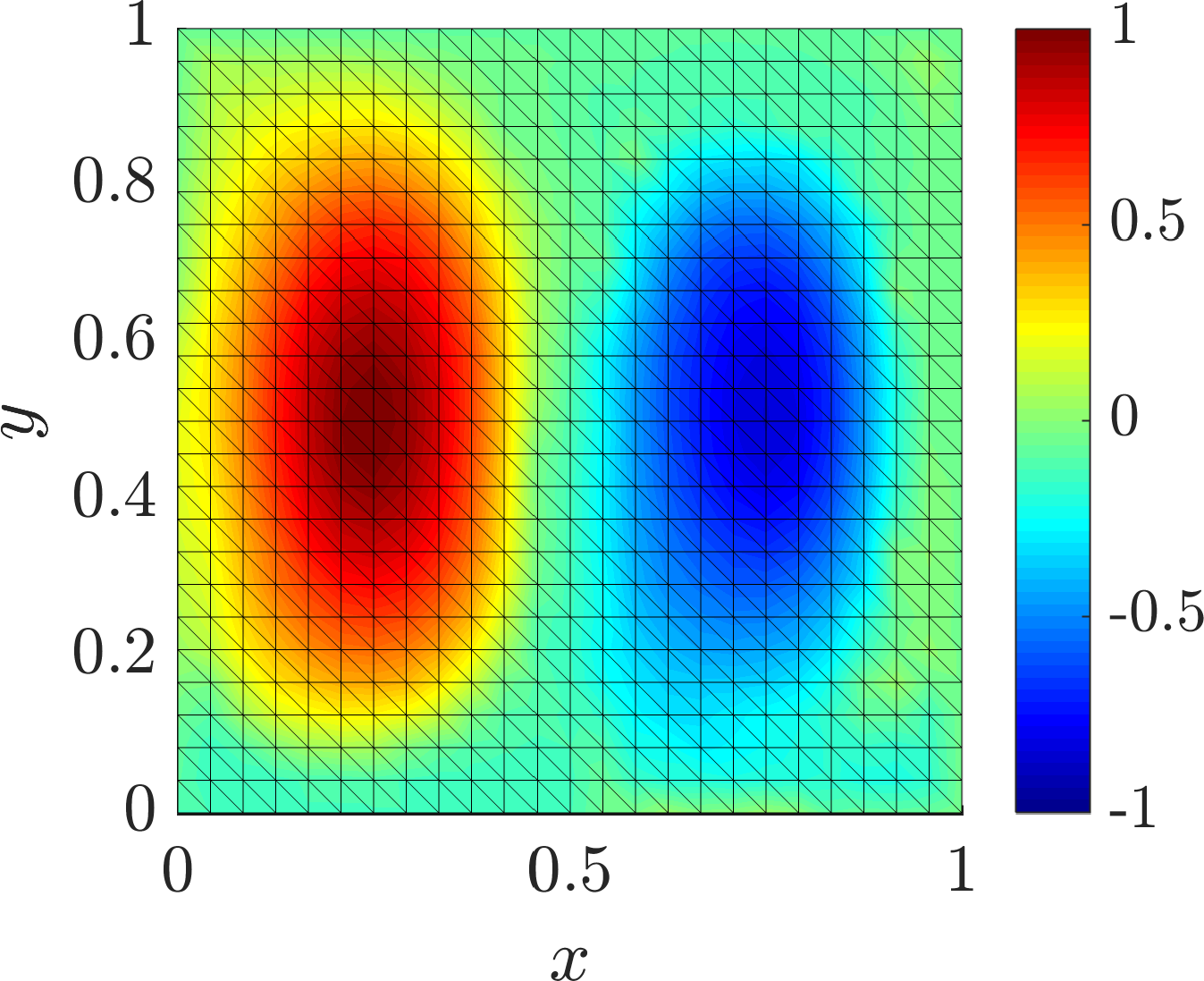}
\includegraphics[width=0.32\textwidth]{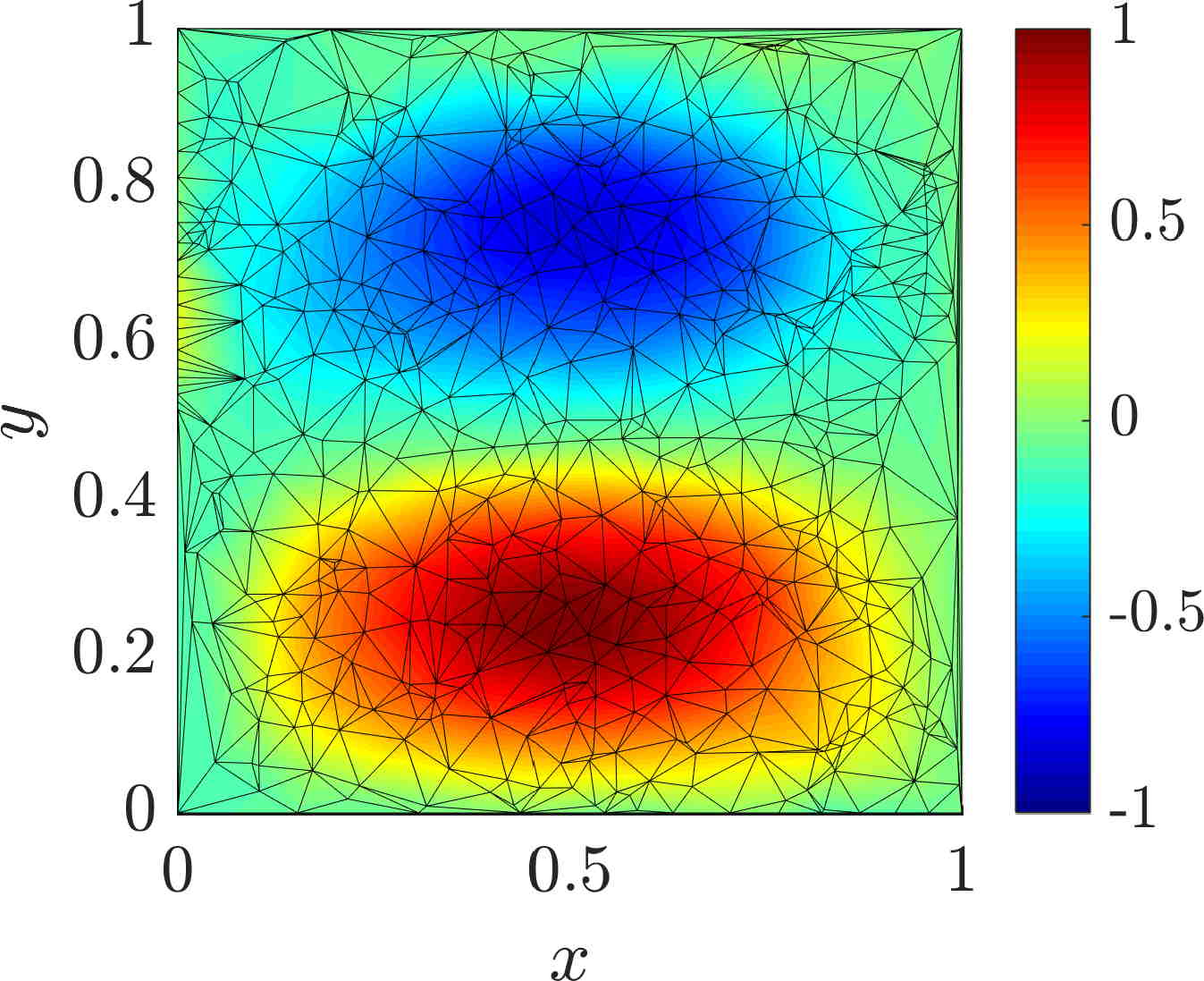}
\includegraphics[width=0.32\textwidth]{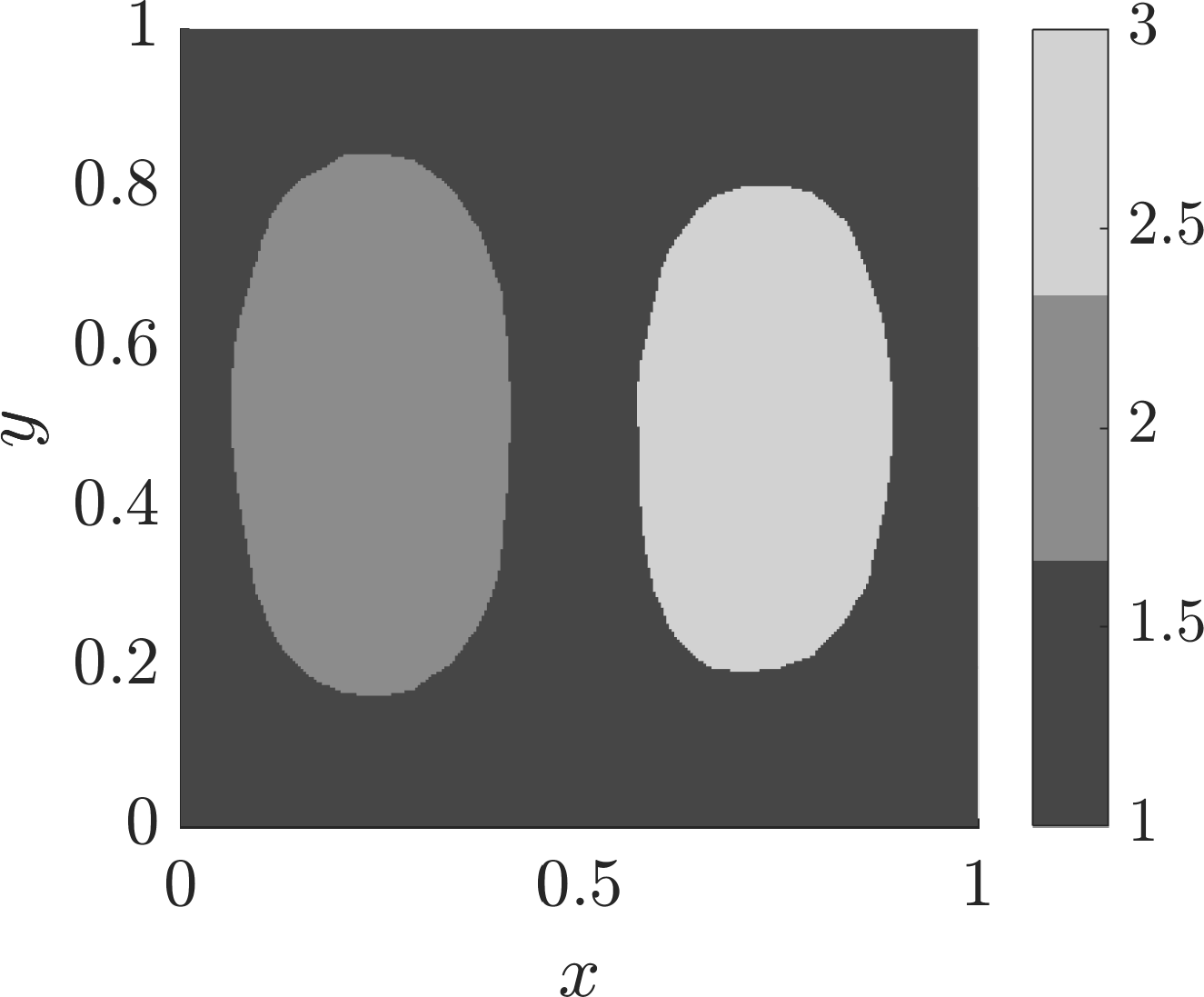}
\caption{Rotating double gyre: 2nd eigenvector on a triangulation of a regular $25\times 25$ grid (left) and its image on the Delaunay triangulation of the image points (center) as well as the resulting coherent 3-partition using the adaptive transfer operator approach from Section~\ref{sec:colladap}.}
\label{fig:Meiss_TO_adap}
\end{center}
\end{figure}
In order to show that this approach is robust with respect to non-uniformly sampled data, we repeat this experiment with a highly non-uniform triangulation: We randomly choose 20000 points in $[0,1]\times [0,0.5]$ and 200 points in $[0,1]\times [0.5,1]$ (both according to a uniform distribution) and use the Delaunay triangulation (cf.~Figure~\ref{fig:Meiss_TO_sampling}) of the union of these two point sets.  The resulting eigenvectors and coherent 3-partition are almost identical to the one obtained by a uniform point sampling.
\begin{figure}[htbp]
\begin{center}
\includegraphics[width=0.32\textwidth]{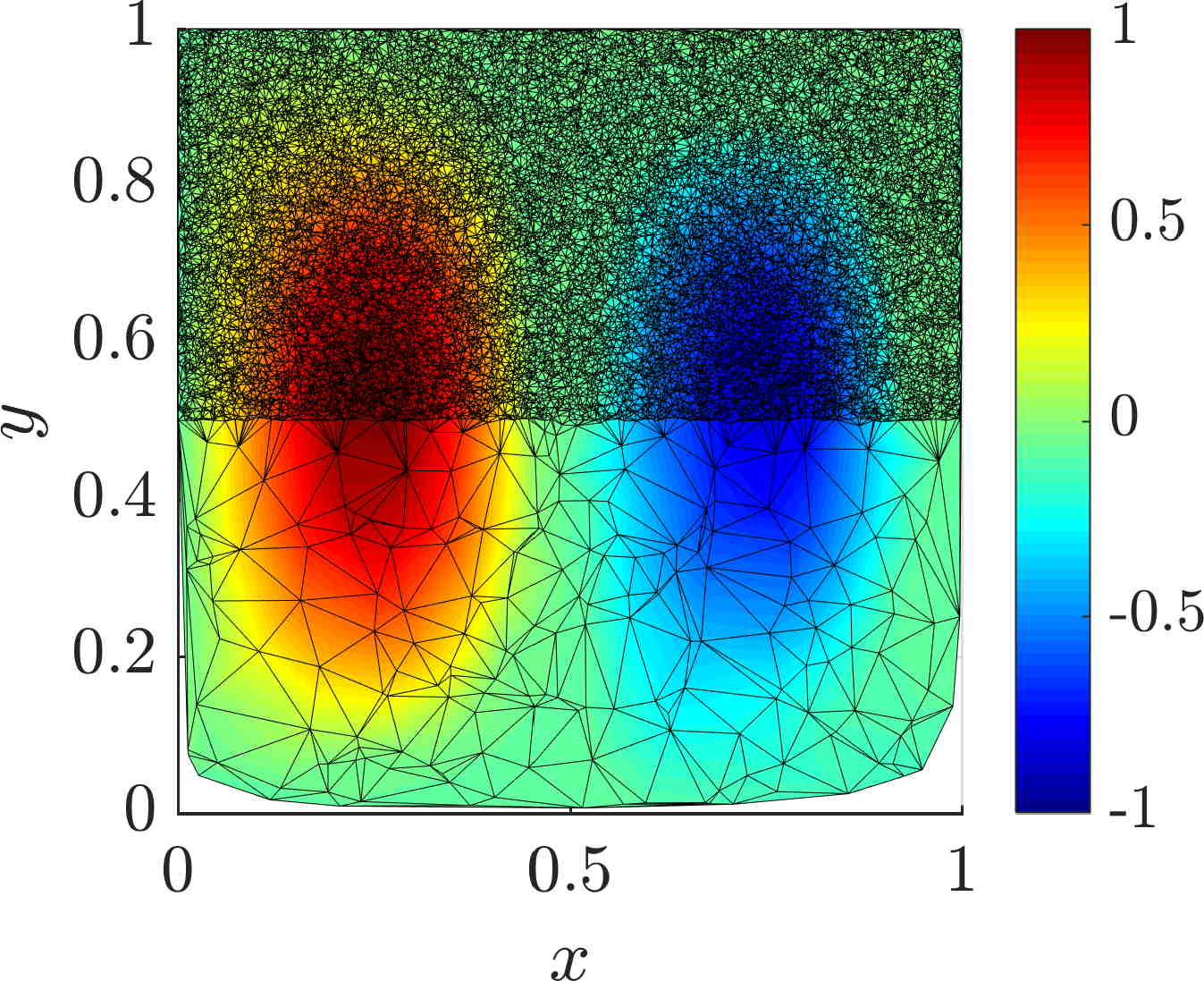}
\includegraphics[width=0.32\textwidth]{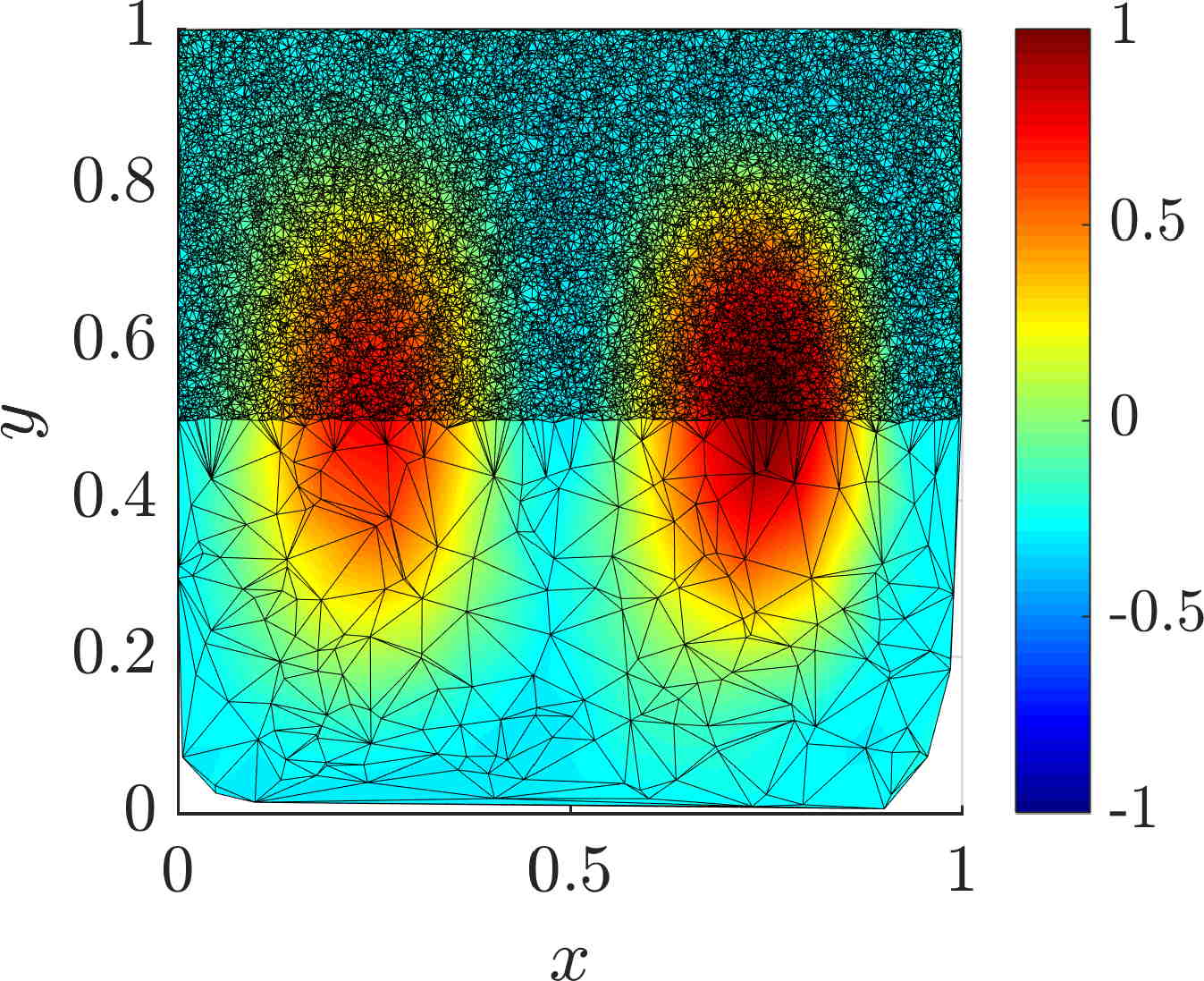}
\includegraphics[width=0.32\textwidth]{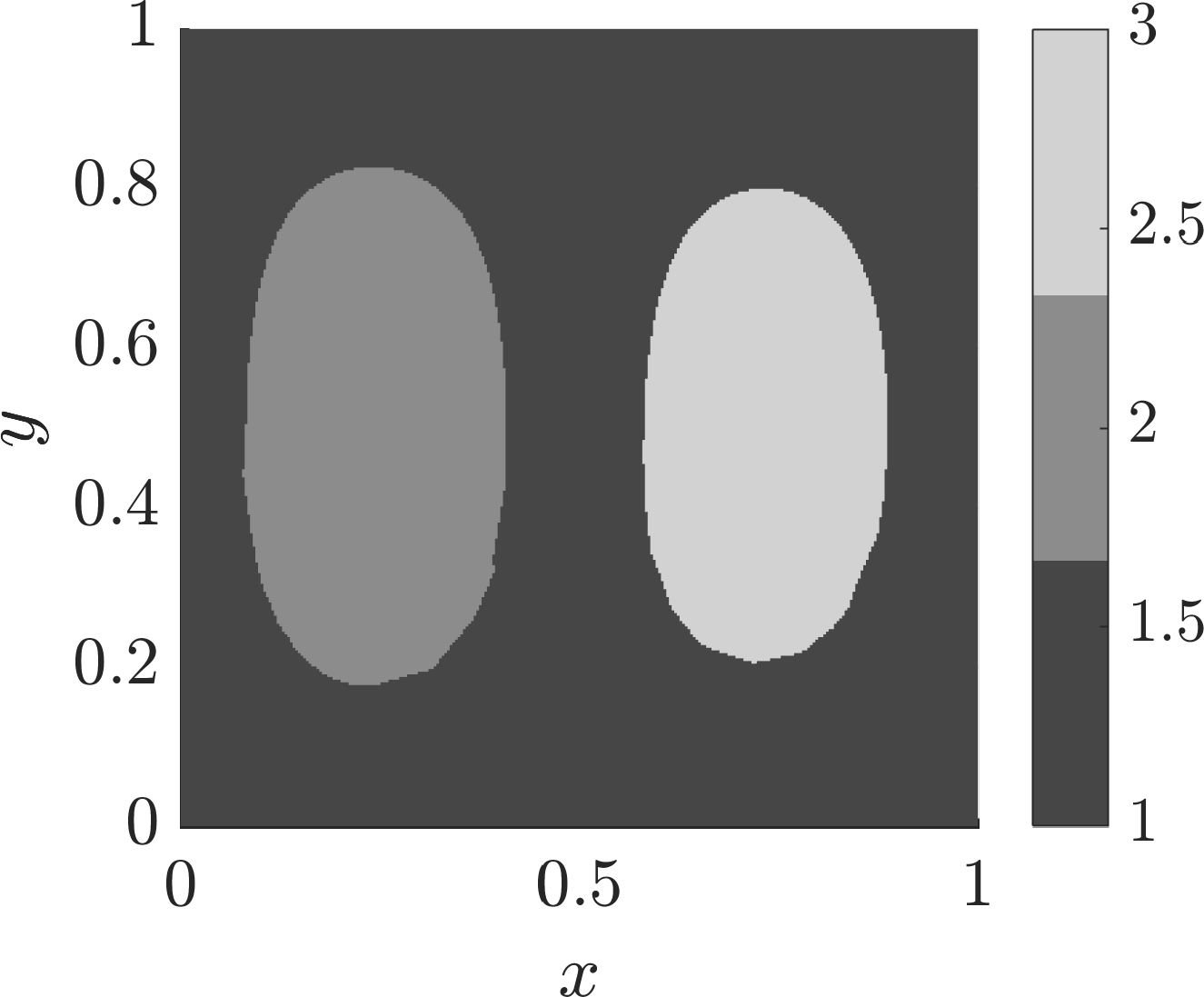}
\caption{Rotating double gyre on a highly non-uniform data set: 2nd and 3rd eigenvector as well as the resulting coherent 3-partition (from left to right), using the adaptive transfer operator approach from Section~\ref{sec:colladap}.}
\label{fig:Meiss_TO_sampling}
\end{center}
\end{figure}

\paragraph{Missing data.}

We finally analyze how well the transfer operator approach from Section~\ref{sec:TO} performs when some of the trajectory data is missing: To this end, we again use 625 randomly scattered points, but now with a denser sampling in time, namely $\mathcal{T} =\{0,0.2,0.4,0.6,0.8,1\}$. Figure~\ref{fig:Meiss_TO_missing} (top row) shows the result for the adaptive transfer operator method, which is essentially the same as in Figs.~\ref{fig:Meiss_TO}-\ref{fig:Meiss_TO_sampling}.

We now randomly delete 60\% of the data, reducing the data from 625 points to 250 data points per time step (and the total number of data points is the same as in Figs.~\ref{fig:Meiss_TO} and \ref{fig:Meiss_TO_adap}). Repeating the same experiment using the approach for missing data described in Section~\ref{sec:missing} we obtain the results shown in the bottom row of Figure~\ref{fig:Meiss_TO_missing} which still clearly show the relevant structures. The corresponding spectrum is shown in Fig.~\ref{fig:Meiss_TO_missing_spectrum}, the gap after the third eigenvalue is still visible.
\begin{figure}[htbp]
\centering
\includegraphics[width=0.32\textwidth]{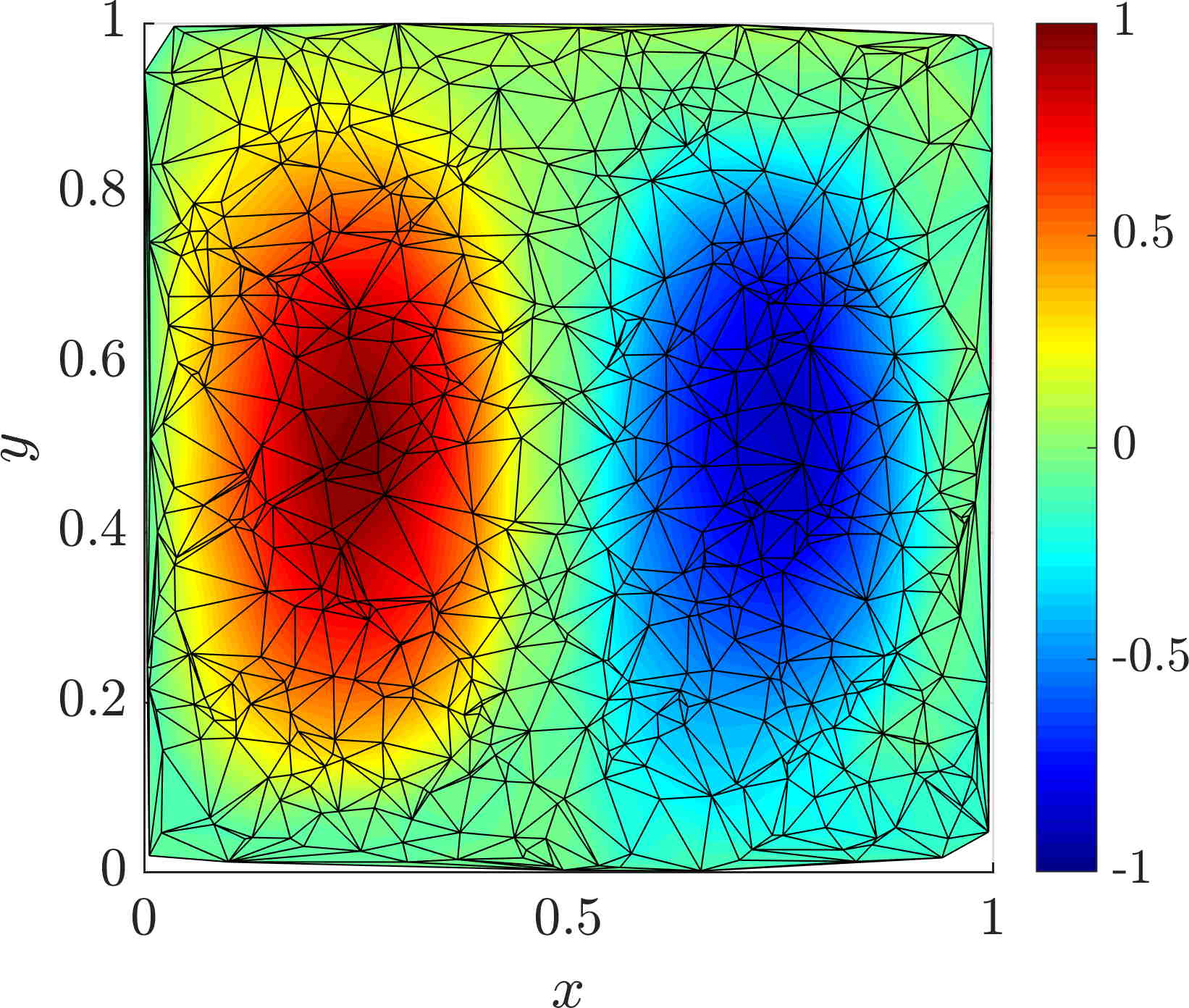}
\includegraphics[width=0.32\textwidth]{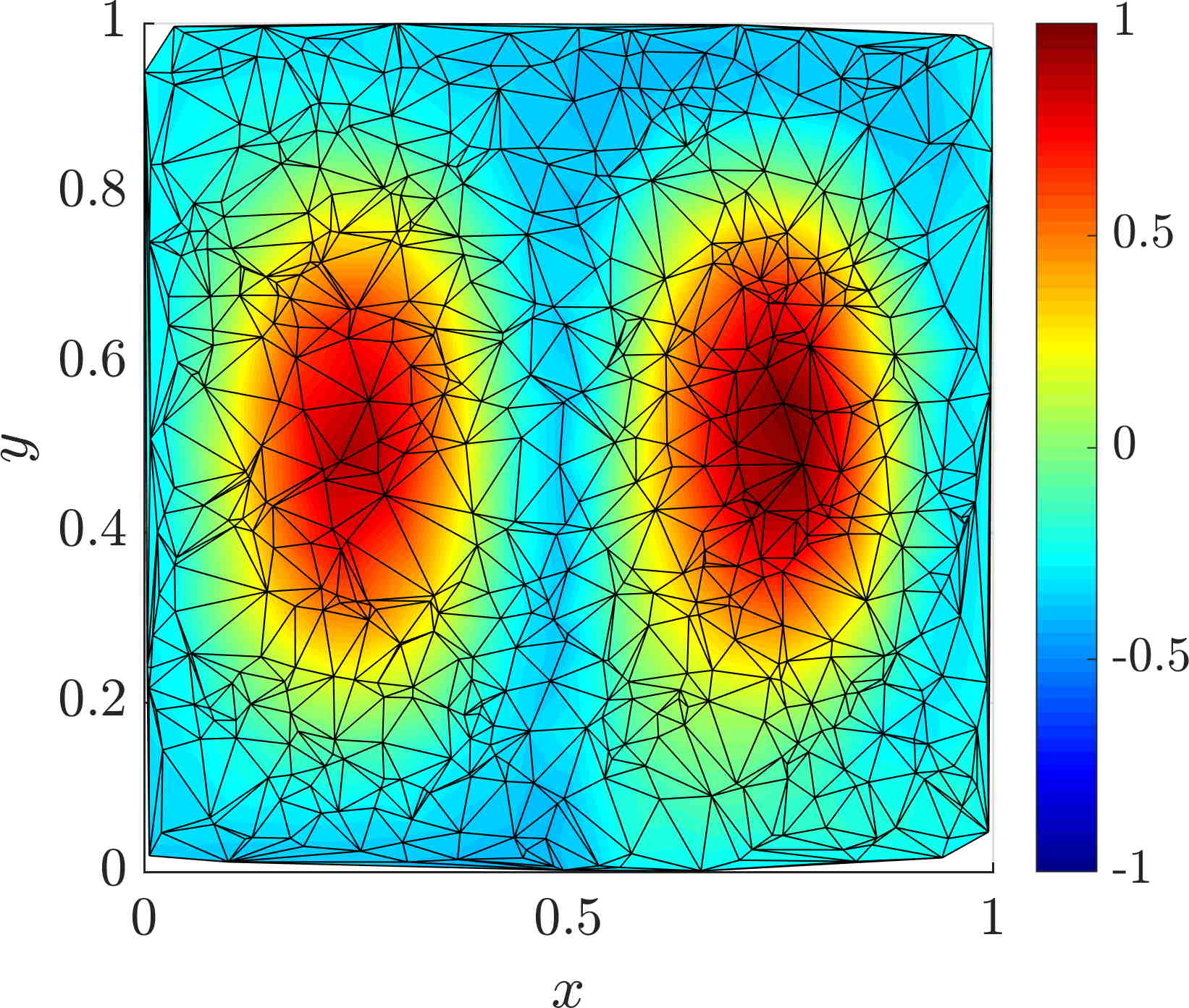}
\includegraphics[width=0.32\textwidth]{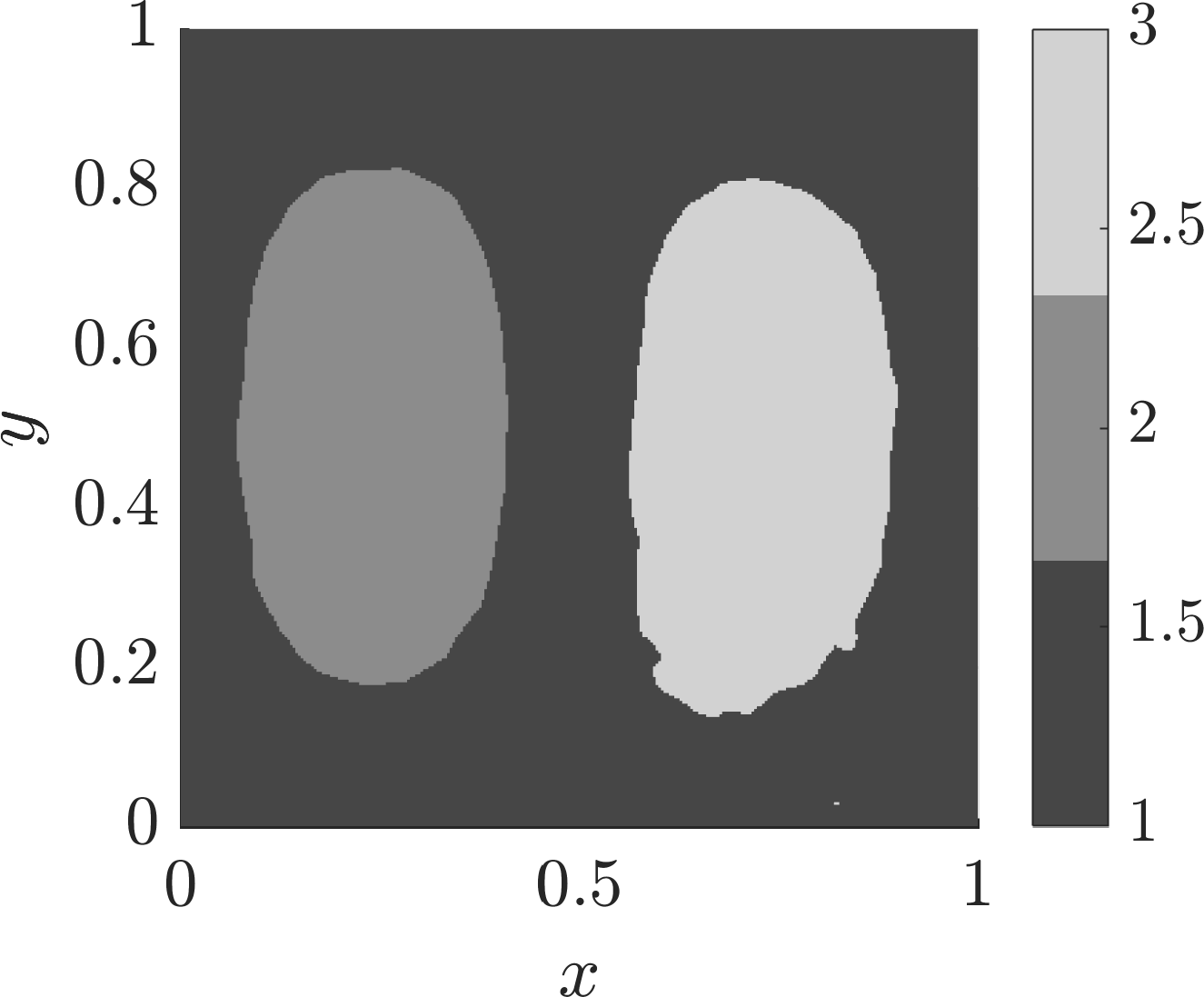}
\includegraphics[width=0.32\textwidth]{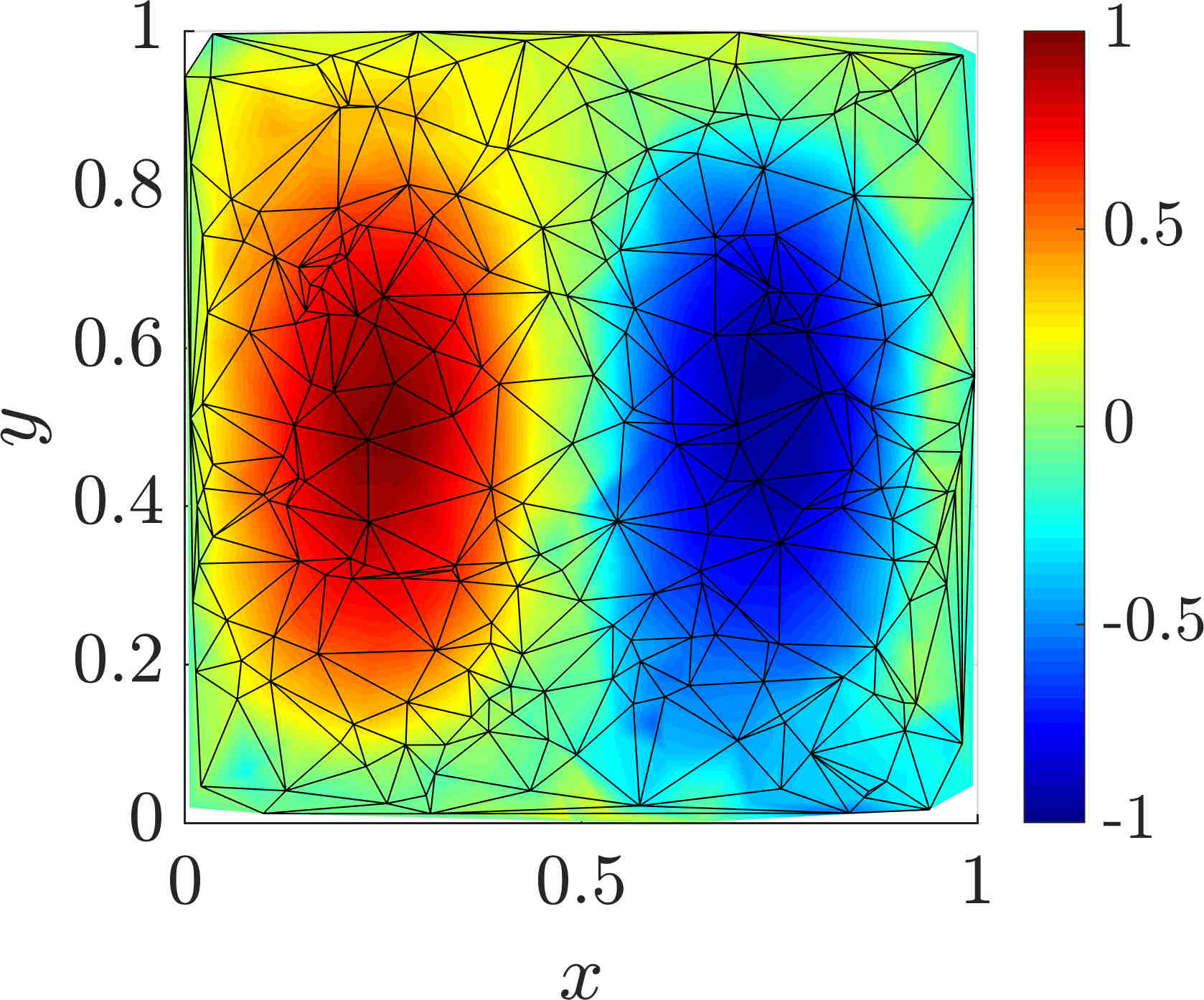}
\includegraphics[width=0.32\textwidth]{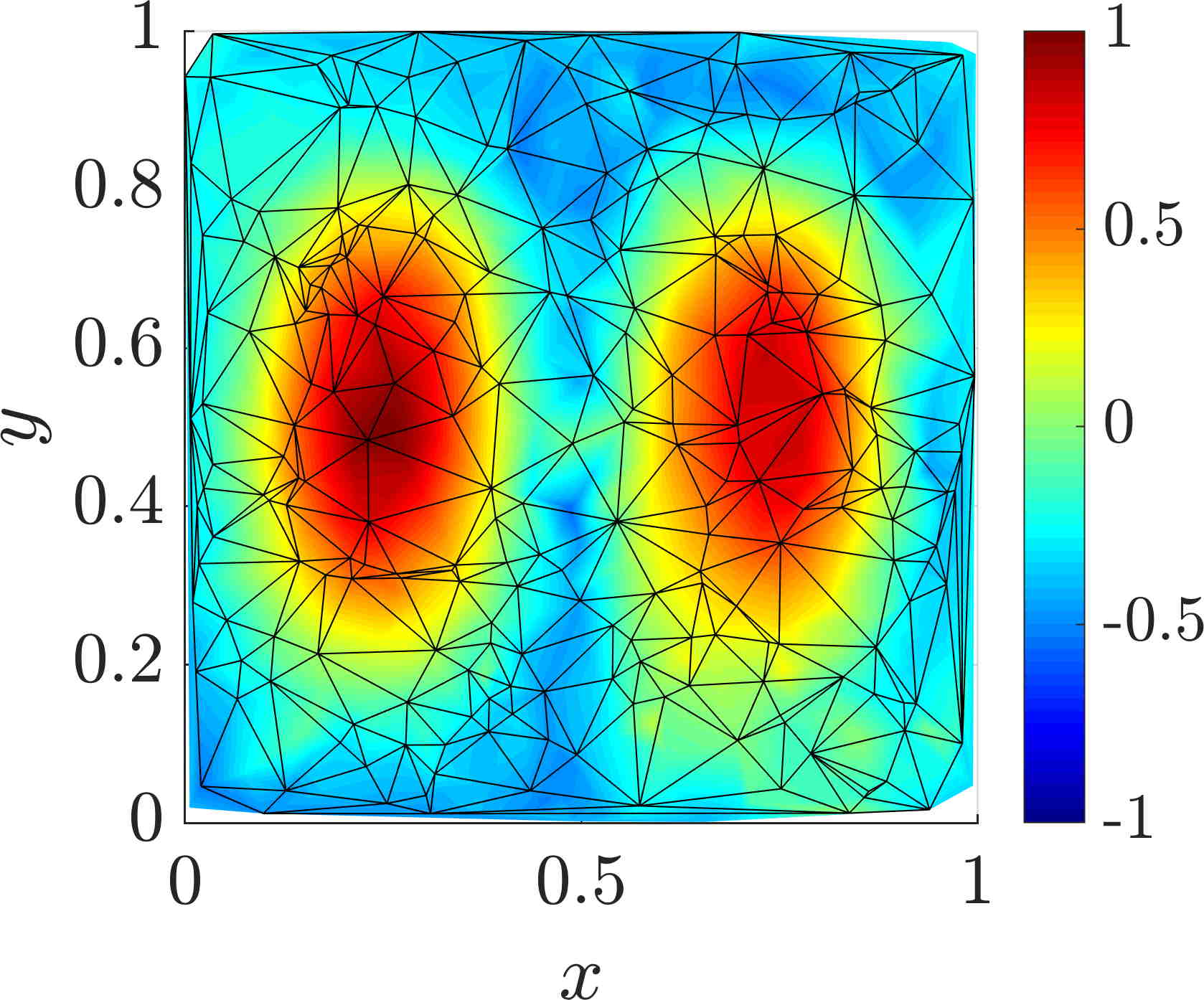}
\includegraphics[width=0.32\textwidth]{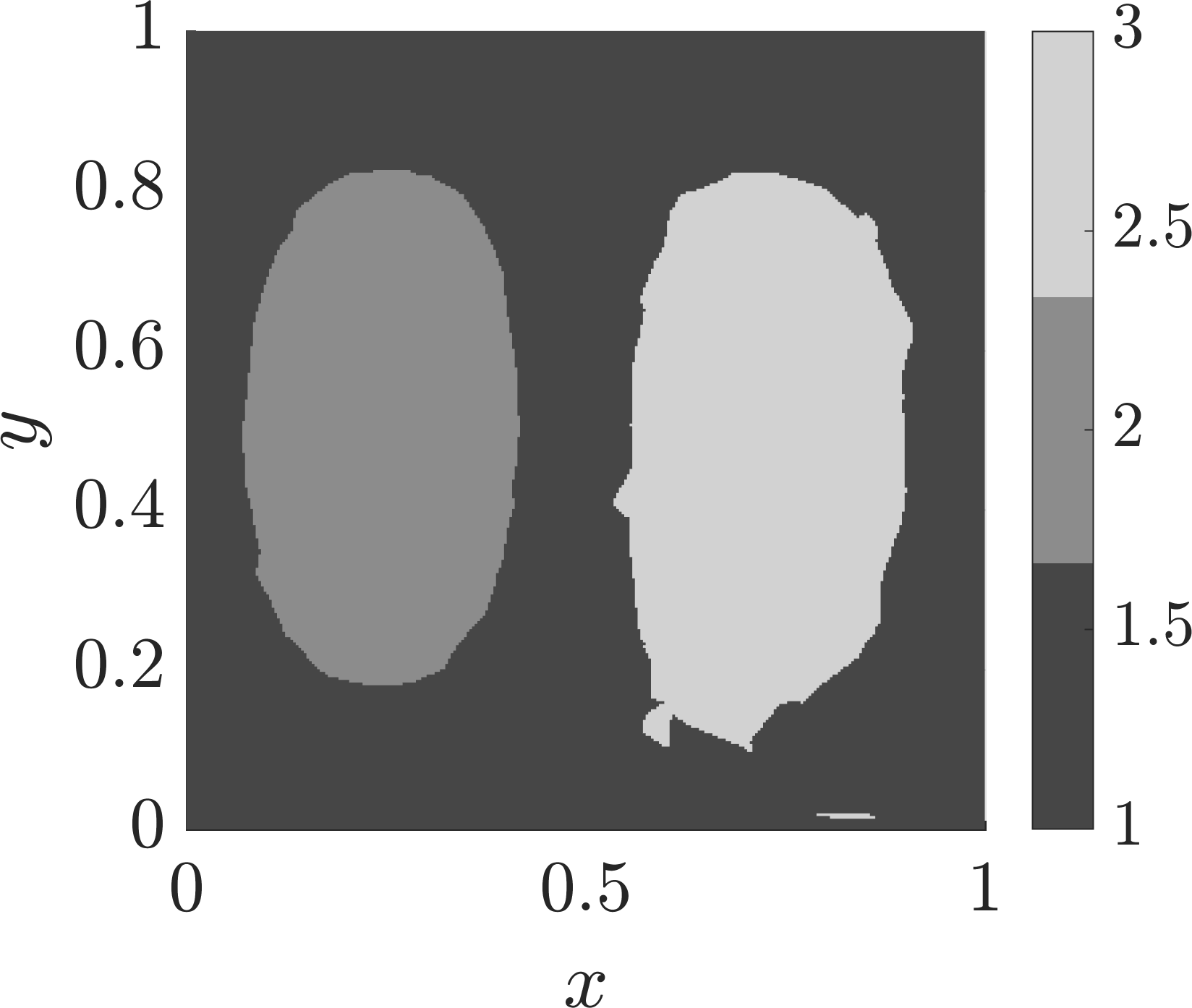}
\caption{Rotating double gyre with missing data: 2nd (left) and 3rd (center) eigenvectors as well as coherent 3-partition (right) for $\mathcal{T} =\{0,0.2,0.4,0.6,0.8,1\}$ with full data (top row) and with 60\% of the data points randomly removed (bottom row), using the adaptive transfer operator approach from Section~\ref{sec:colladap} and \ref{sec:missing}.}
\label{fig:Meiss_TO_missing}
\end{figure}

\begin{figure}[htbp]
\centering
\includegraphics[width=0.5\textwidth]{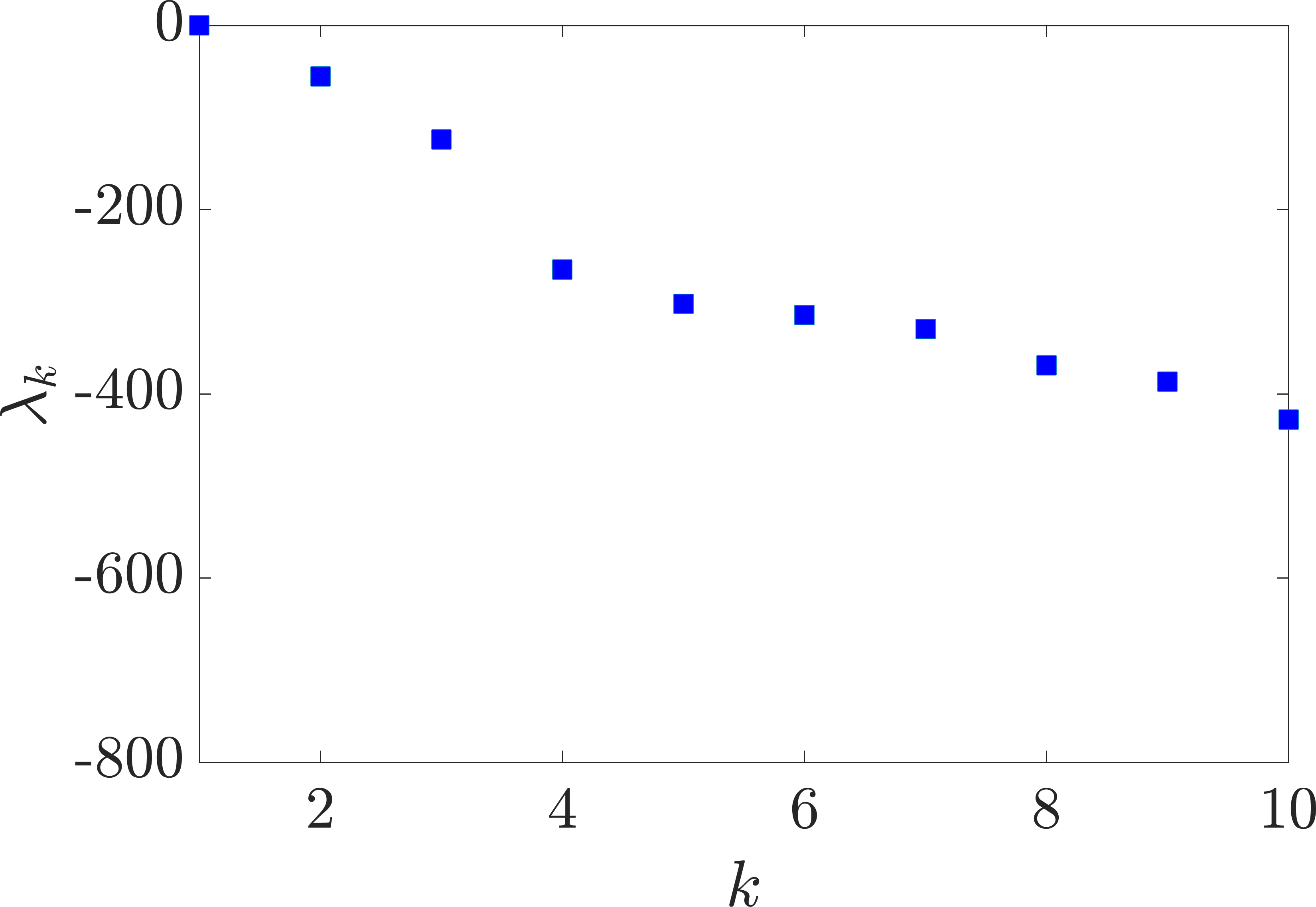}
\caption{Rotating double gyre with missing data: spectrum of the dynamic Laplacian using the approach for missing data from Section~\ref{sec:colladap}/\ref{sec:missing}.}
\label{fig:Meiss_TO_missing_spectrum}
\end{figure}

\subsection{Experiment: A non-volume preserving standard map and nonuniform initial density}

For an experiment with a non-volume preserving map and a non-uniform initial measure $\mu^0$ on $\M$, we use a test case from the literature \cite{FrKw16}.
We consider the map $\Phi = \Phi_2\circ \Phi_1$ on the flat 2-torus $\M = 2\pi S^1\times 2\pi S^1$ with
\begin{align*}
\Phi_1(x,y) & = \left(x + 0.3\cos(2x),y\right)\\
\Phi_2(x,y) & = \left(x + y, y + 8\sin(x+y)\right).
\end{align*}
The map $\Phi_1$ compresses horizontally at $x=\pi/2, 3\pi/2$ and expands horizontally at $x=0, \pi$.
The map $\Phi_2$ is the volume-preserving standard map with parameter 8.
The initial measure $\mu^0$ has density $h^0(x,y) = \frac{1}{8\pi^2}\left(\sin(y-\frac{\pi}{2})+1\right)$, shown in Figure \ref{fig:stdmap_TO_adap} (upper left);  the ratio of its maximum to minimum values is infinite as the density dips to zero at $y=0$.
This highly varying density and strongly nonlinear $\Phi$ make this example particularly challenging.

We used the adaptive transfer operator approach from Section~\ref{sec:colladap} and the formulae for the non-volume preserving case from Section \ref{sect:nvp} with $\mathcal{T}=\{0,1\}$ (one iteration of $\Phi$) on a regular triangulation with $40\times 40$ nodes.
We provide an implementation of scattered node triangulations on a periodic domain (which is not standard in MATLAB) at \verb"https://github.com/gaioguy/FEMDL".

The results are shown in Fig.~\ref{fig:stdmap_TO_adap}, which should be compared to Fig.~4 in \cite{FrKw16}.
The boundaries of the optimal coherent sets in these figures are the cyan level sets, which approximately follow lines of constant $x + y$ value and constant $x$ value, respectively (as was the case in Fig. 7 of \cite{F15}).
However, the distortion from $\Phi_1$ and the non-uniform weight $\mu_0$ causes deviations of these level sets from straight lines (see Section 6.4 \cite{FrKw16} for further discussion).
Note that the eigenvectors are captured reasonably well in Figure~\ref{fig:stdmap_TO_adap} (upper right, lower right), even though the approximation to the image density is apparently rather crude (Figure~\ref{fig:stdmap_TO_adap} (lower left)).

\begin{figure}[htbp]
\centering
\includegraphics[width=0.300\textwidth]{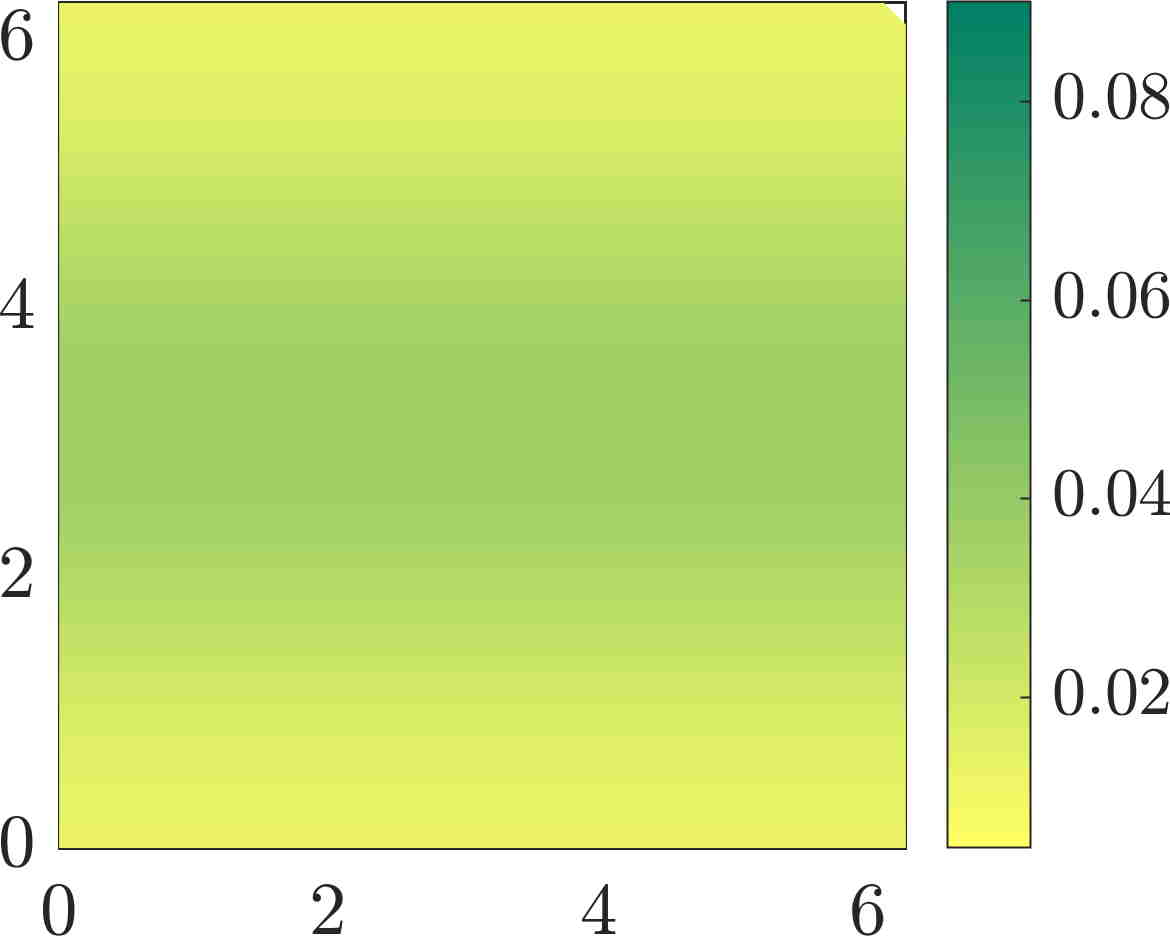}\quad
\includegraphics[width=0.230\textwidth]{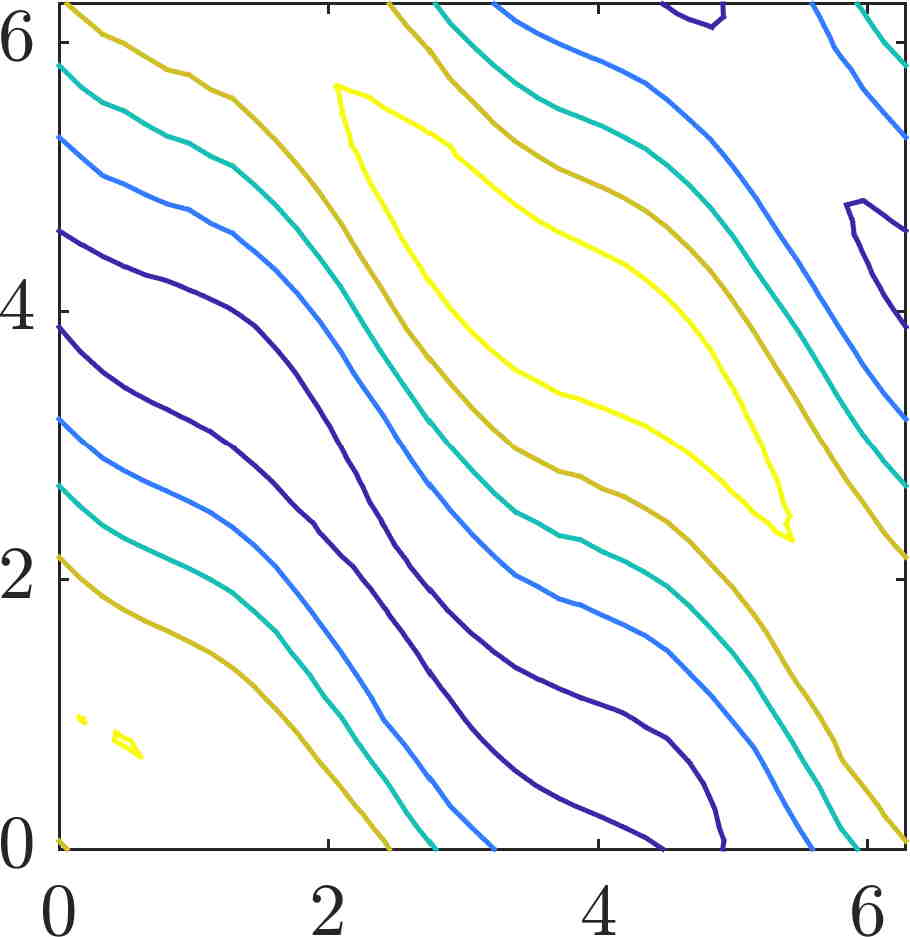}\\[2mm]
\includegraphics[width=0.300\textwidth]{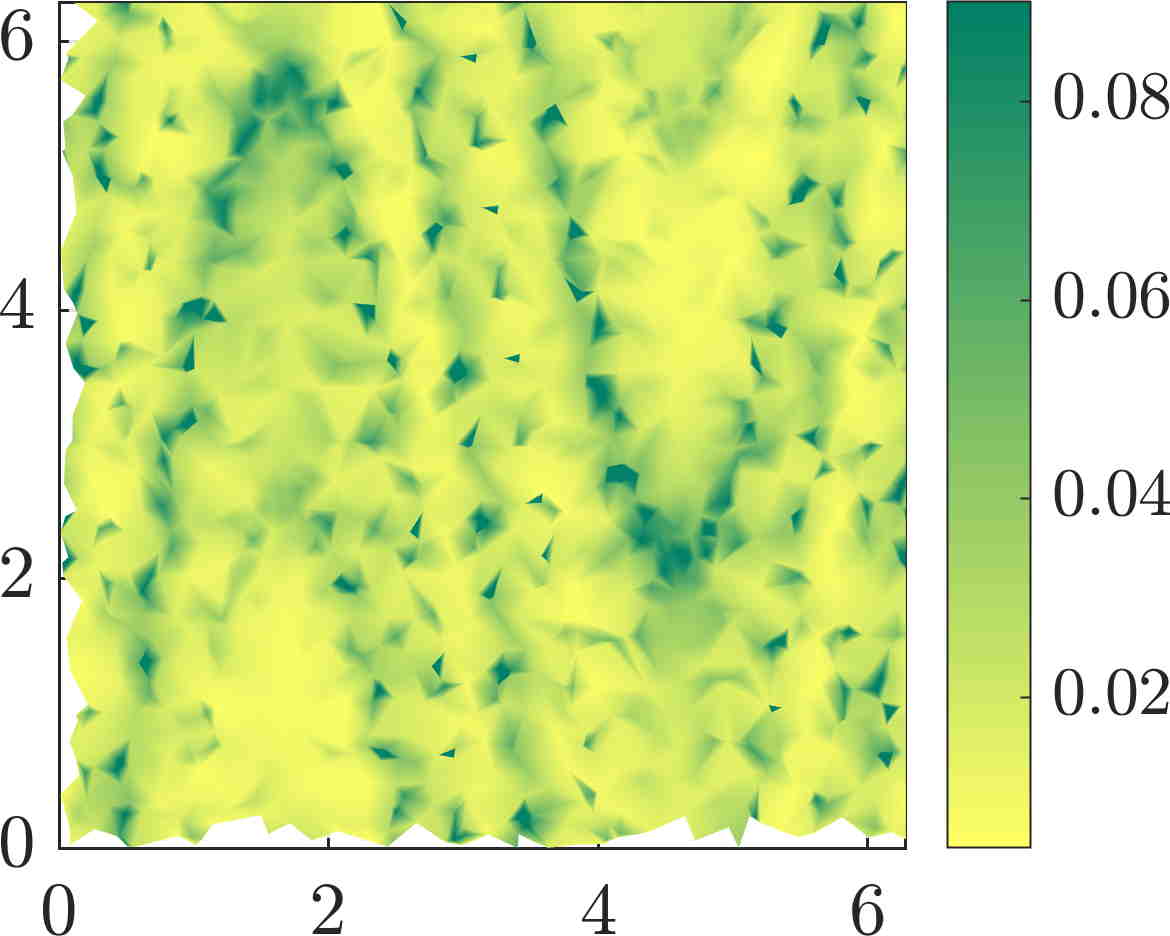}\quad
\includegraphics[width=0.230\textwidth]{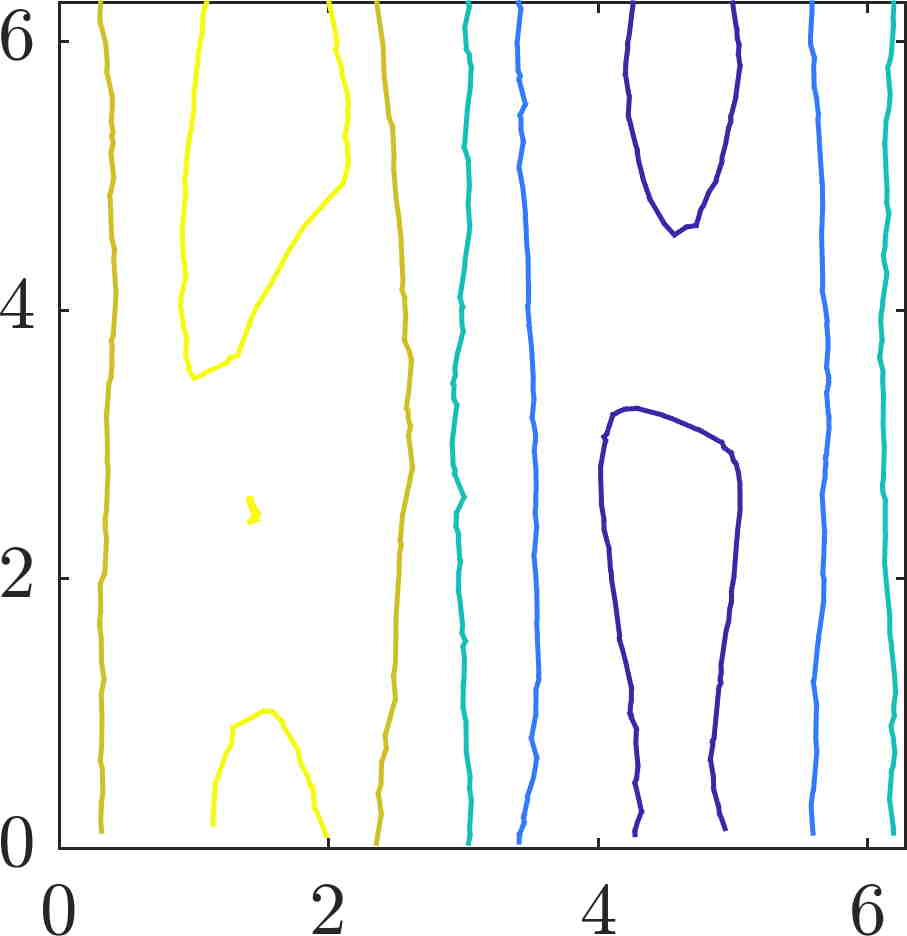}
\caption{Non-volume-preserving standard map on the flat 2-torus: initial density (upper left), 2nd eigenvector (upper right), image density (lower left) and image of 2nd eigenvector (lower right)  using the adaptive transfer operator approach from Section~\ref{sec:colladap}. The colour scheme has been chosen for easy comparison with Fig.~4 in \cite{FrKw16}.}
\label{fig:stdmap_TO_adap}
\end{figure}

\subsection{Experiment: The Bickley Jet}

\label{exp:bickleyjet}

As a second example, we consider the Bickley jet flow introduced in \cite{Ryetal07a}, modeling a meandering jet with additional rotating vortices. The defining stream function for this Hamiltonian system is given by
\[
\psi(x,y,t) = -U_0L_0 \tanh(y/L_0)+\sum_{i=1}^3 A_i U_0 L \text{sech}^2(y/L)\cos(k_i (x - c_i t))
\]
with parameter values $U_0=62.66 \text{ ms}^{-1}$, $L_0 = 1770$ km and $A_1 = 0.0075, A_2 = 0.15, A_3 = 0.3,  c_1 = 0.1446 U_0, c_2 = 0.205 U_0, c_3 = 0.461 U_0, k_1 = 2/r_e, k_2 = 4/r_e, k_3 = 6/r_e, r_e = 6371$ km as in \cite{HKTH16}. We consider the associated flow on the domain $\M = [0,20]\times [-3,3]$ with periodic boundary conditions in $x$-direction on the time interval $t\in [0,40]$ days.

\paragraph{Cauchy-Green approach.}

Employing $\mathcal{T}=\{0,40\}$ and the Delaunay triangulation on a regular grid of $100\times 30$ points as well as Gauss quadrature of degree 1, we obtain the results shown in Figure~\ref{fig:bickley_CG}. The evaluation of $C_t^{-1}$ on roughly 6000 quadrature nodes takes about 8 s, the assembly of the matrices 0.07 s and the solution of the eigenproblem 0.4 seconds.
\begin{figure}[htbp]
\begin{center}
\includegraphics[width=0.49\textwidth]{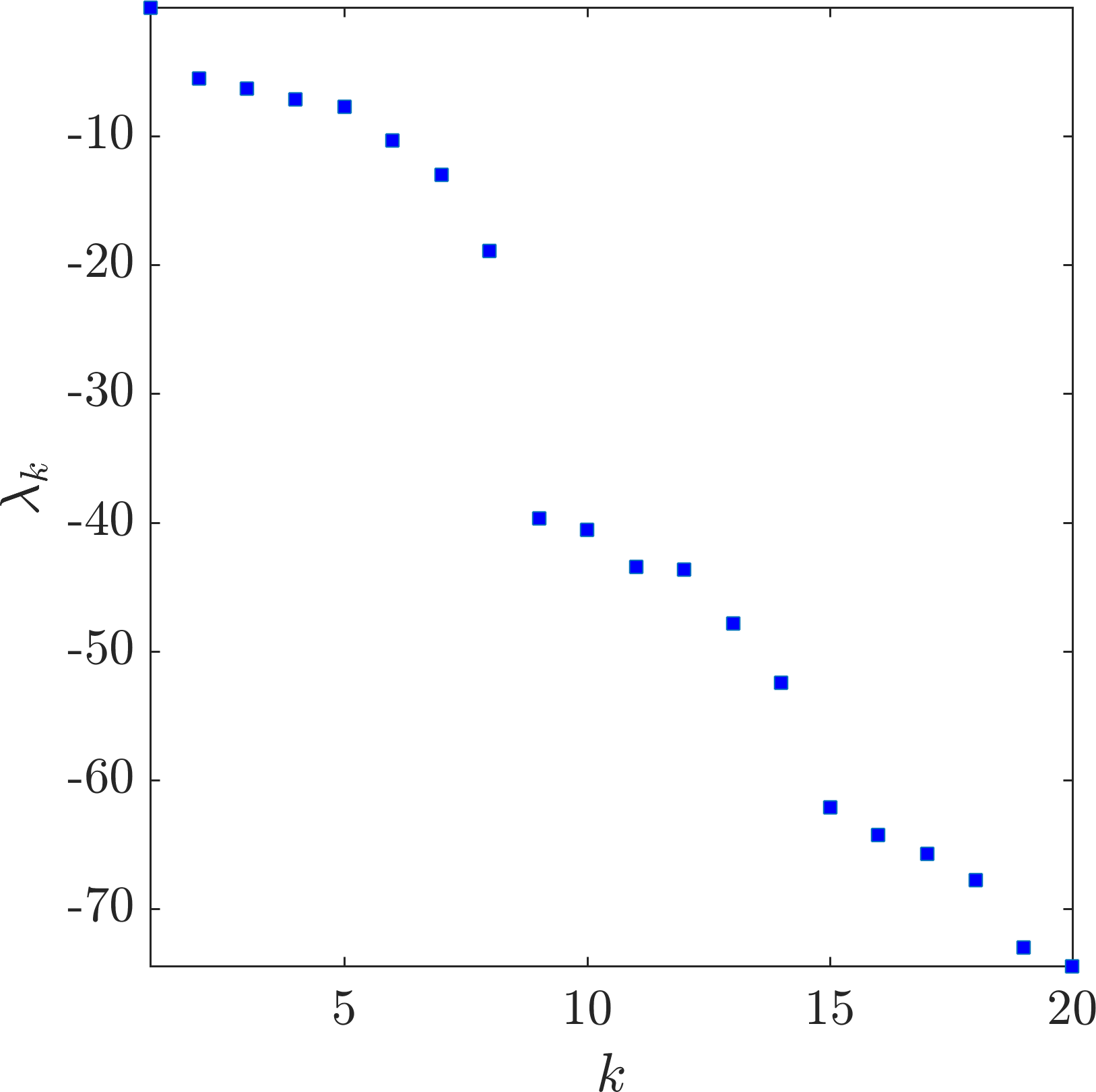}
\includegraphics[width=0.49\textwidth]{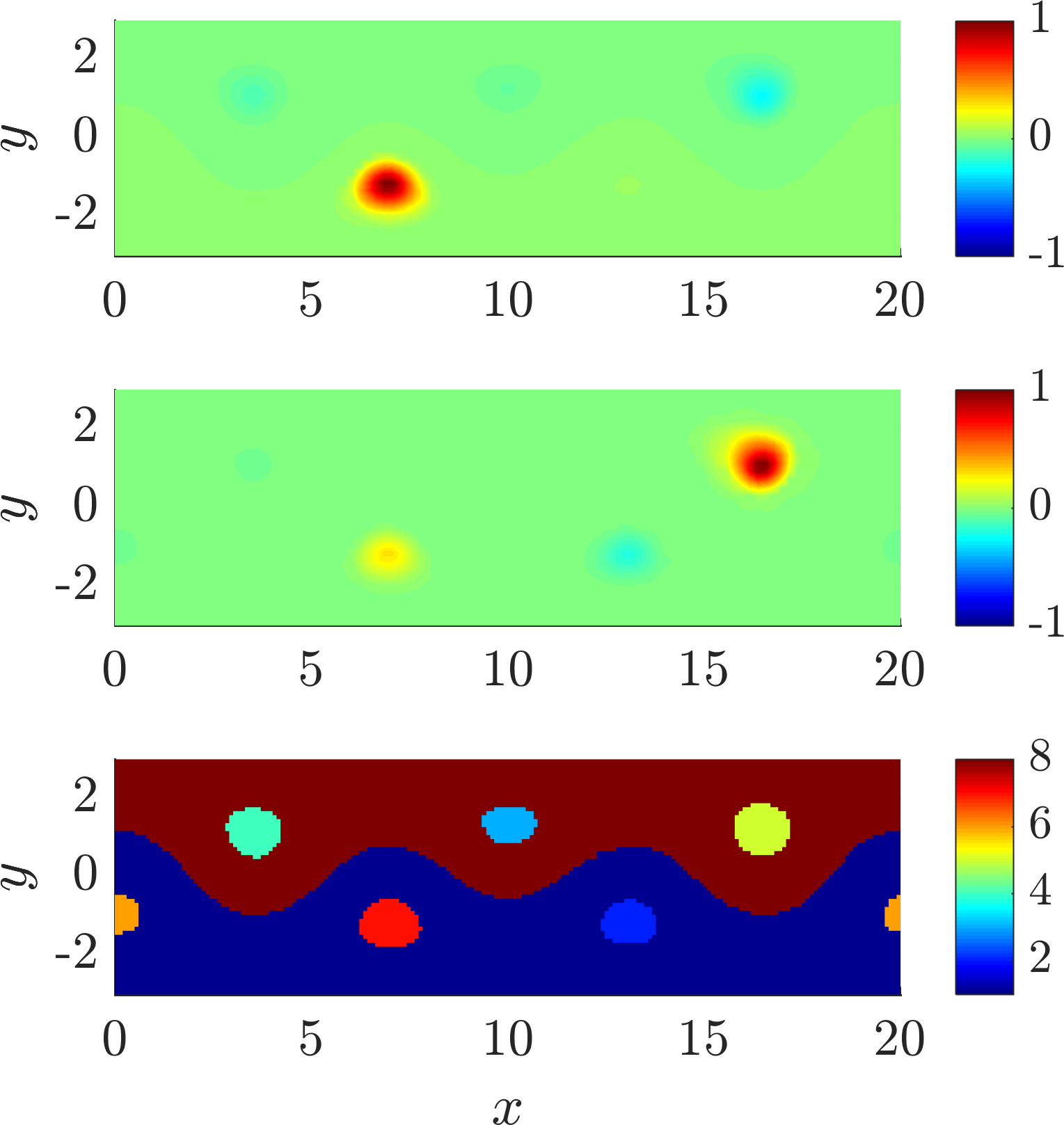}
\caption{Bickley jet: Spectrum (left), 2nd (right top) and 3rd (right center) eigenvector and a coherent $8$-partition (right bottom) on a triangulation of a regular $100\times 30$ grid of nodes using the Cauchy-Green approach from Section~\ref{sec:CG}.}
\label{fig:bickley_CG}
\end{center}
\end{figure}

\paragraph{Adaptive transfer operator approach.}

Using the same triangulation as for the Cauchy-Green approach, we obtain the results shown in Figure~\ref{fig:bickley_TO_adap}.
The spectrum in Figure 13 (left) shows a gap after the 2nd eigenvalue (corresponding to the upper/lower separation shown in Figure 13 (top right)), followed by a further six eigenvalues before the next gap (corresponding to the 6 vortices present in the flow).
The evaluation of the flow map on the $3000$ nodes of the triangulation takes 1 s, the assembly of the matrices 0.1 s and the solution of the eigenproblem 0.5 seconds.
\begin{figure}[htbp]
\begin{center}
\includegraphics[width=0.49\textwidth]{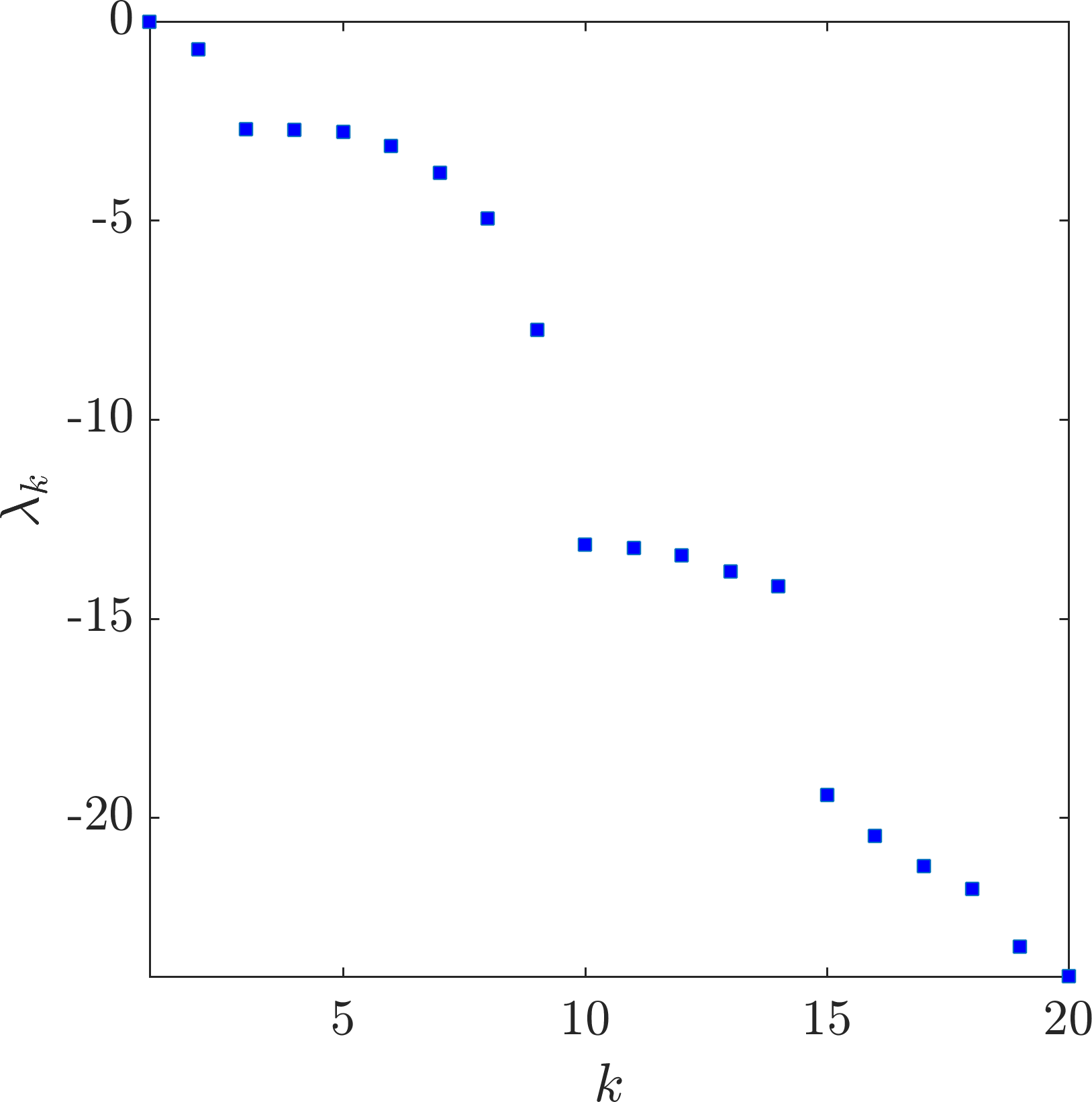}
\includegraphics[width=0.49\textwidth]{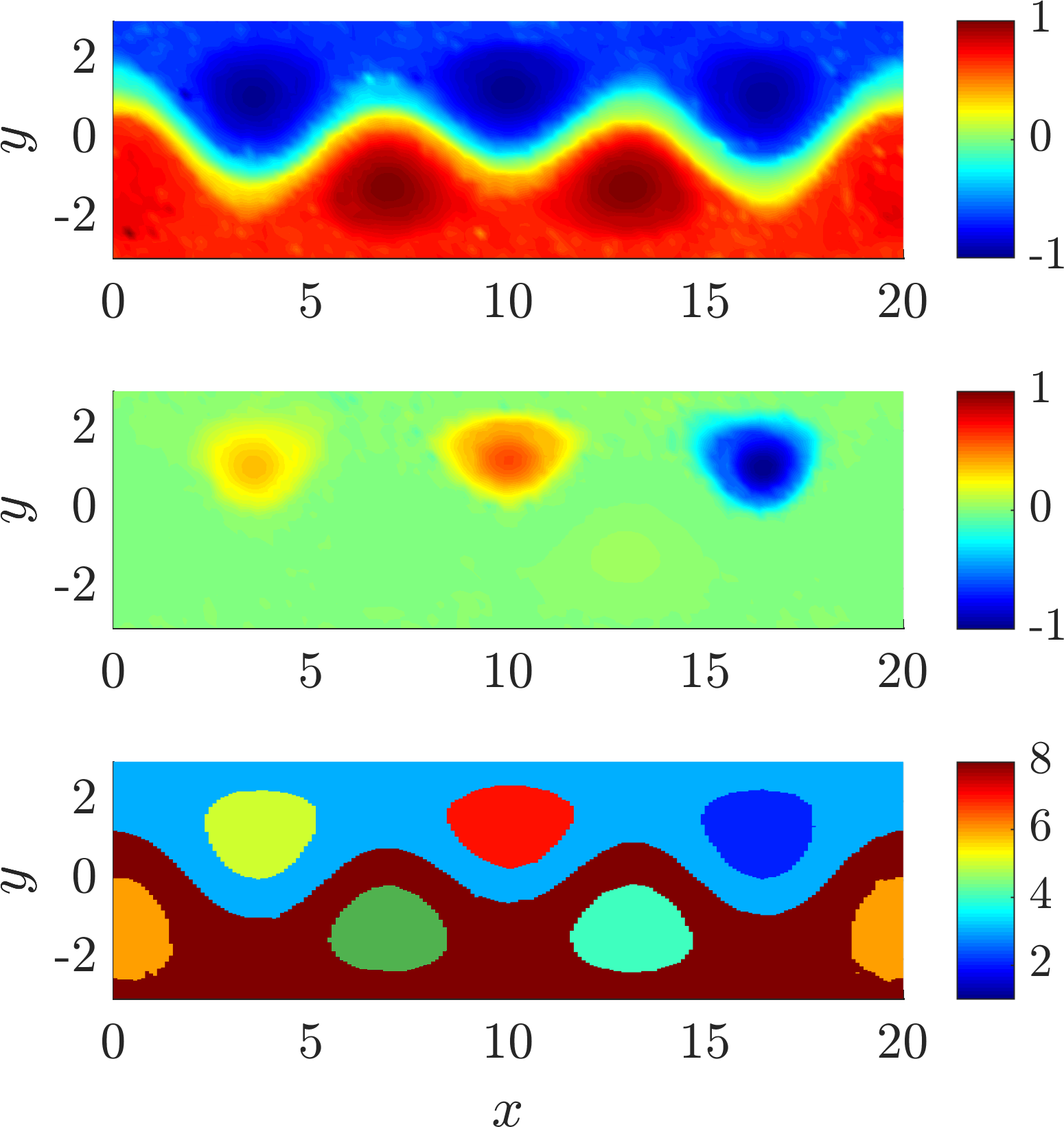}
\caption{Bickley jet: Spectrum (left), 2nd (right top) and 3rd (right center) eigenvector and a coherent $8$-partition (right bottom) on a triangulation of a regular $100\times 30$ grid of nodes using the adaptive transfer operator approach from Section~\ref{sec:colladap}.}
\label{fig:bickley_TO_adap}
\end{center}
\end{figure}

\paragraph{Missing data.}

In order to analyze the performance when some of the data is missing we use the same  $100\times 30$ grid, but now with ten intermediate time samplings of the trajectories, i.e.~$\mathcal{T}=\{0,4,8,\ldots,40\}$.  The result of the adaptive transfer operator approach is essentially the same as in Fig.~\ref{fig:bickley_TO_adap}.  We then randomly delete 80\% of the data, reducing it from 3000 points to 600 points per time step, which in total yields the same number of data points as with $\mathcal{T}=\{0,40\}$ and no missing data.  The approach from Section~\ref{sec:missing} leads to the results shown in Fig.~\ref{fig:Bickley_missing}, still clearly showing similar structures to Fig.~\ref{fig:bickley_TO_adap}. Furthermore, the spectral gap structure shown in Fig.~\ref{fig:Bickley_missing} (left) remains broadly consistent with the results using 3000 points shown in Fig.~\ref{fig:bickley_TO_adap} (left), with an additional separation between eigenvalues 6 and 7, possibly due to a loss of coherence information from the data destruction.
%Note, however, that a spectral gap now appears after the 2nd, the 6th and the 8th eigenvalue.
\begin{figure}[htbp]
\centering
\includegraphics[width=0.49\textwidth]{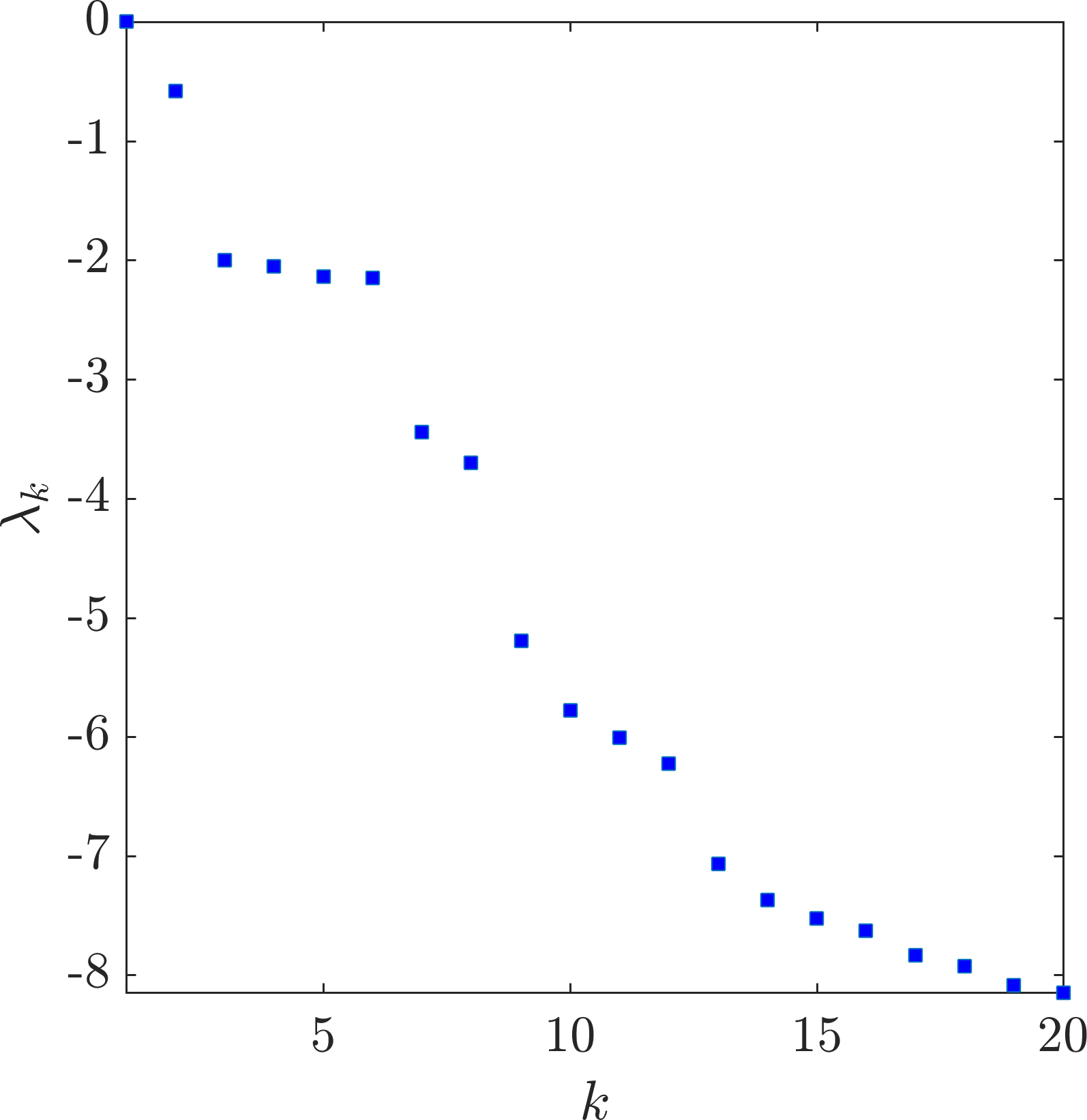}
\includegraphics[width=0.49\textwidth]{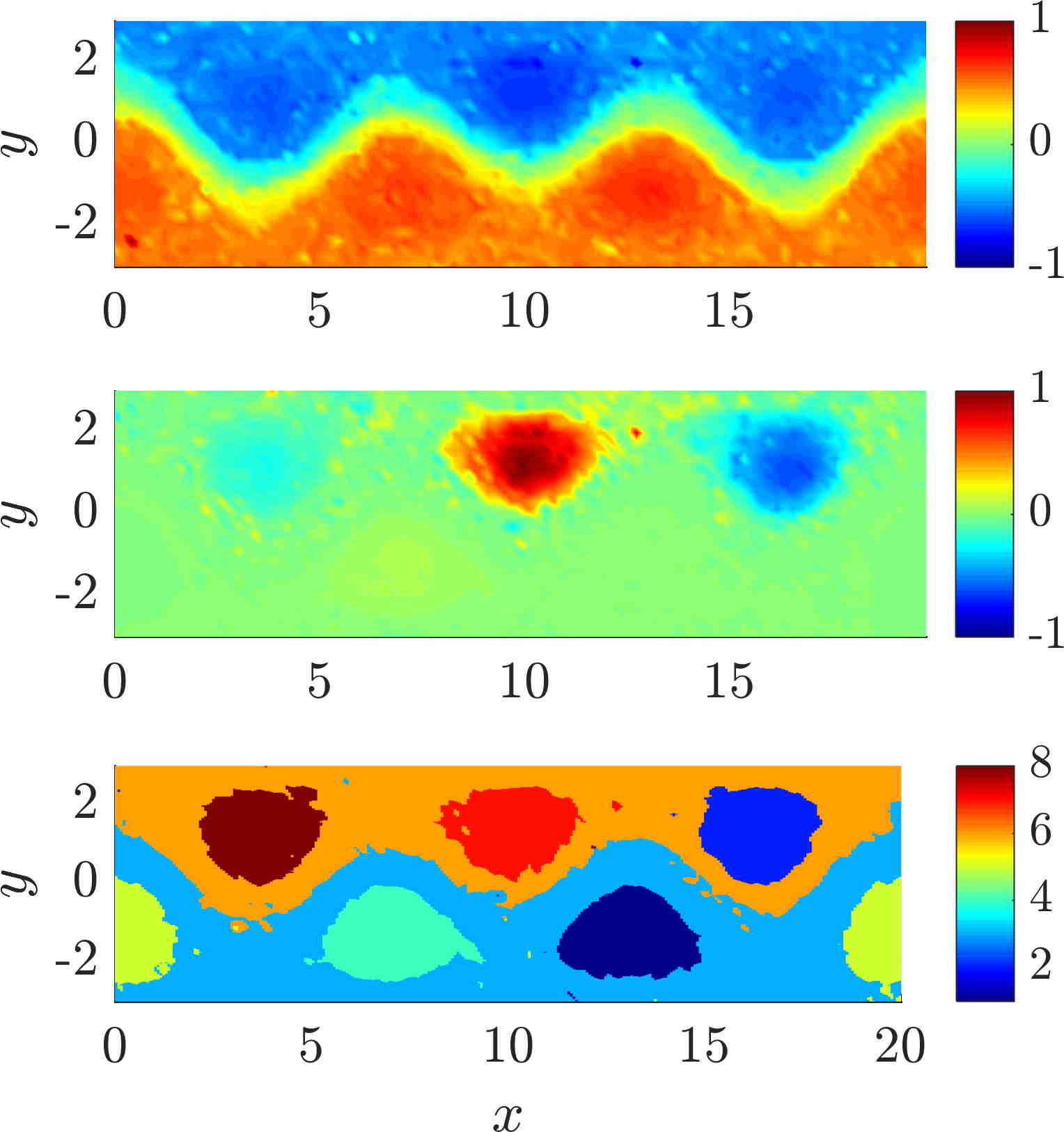}
\caption{Bickley jet with missing data: Spectrum (left), 2nd (right top) and 3rd (right center) eigenvector as well as coherent 8-partition (right bottom) with 80\% of the data  randomly removed (cf.~Fig.~\ref{fig:bickley_TO_adap}).}
\label{fig:Bickley_missing}
\end{figure}

\subsection{Experiment: Ocean flow from satellite data}
\label{exp:ocean_CG}

We now consider an unsteady velocity field derived from AVISO satellite altimetry measurements in the domain of the Agulhas leakage in the South Atlantic Ocean. Under the assumption of geostrophicity, the sea surface height $h$ yields a stream function for the velocity of water at the surface and the corresponding equations of motion for particle trajectories are
\begin{align}
	\dot\varphi &= -A(\theta)\partial_\theta h(\varphi,\theta,t)\\
	\dot \theta &= A(\theta)\partial_\varphi h(\varphi,\theta,t),
\end{align}
where $\varphi$ is the longitude and $\theta$ the latitude of a particle, $A(\theta) = g/{R^22\Omega\sin\theta\cos\theta}$, $g$ is the gravitational constant, $R$ is the mean radius of the earth and $\Omega$ the earth's mean angular velocity.  We choose the same spatial and temporal domain as in \cite{HaHa16a}, namely $[-4,6]\times [-34,-28]$ and a period of 90 days, from $t_0 = $ Nov 11, 2006 on.  Figure~\ref{fig:ocean_FTLE} shows the forward finite time Lyapunov exponent field of the data.
\begin{figure}[htbp]
\begin{center}
\includegraphics[width=0.6\textwidth]{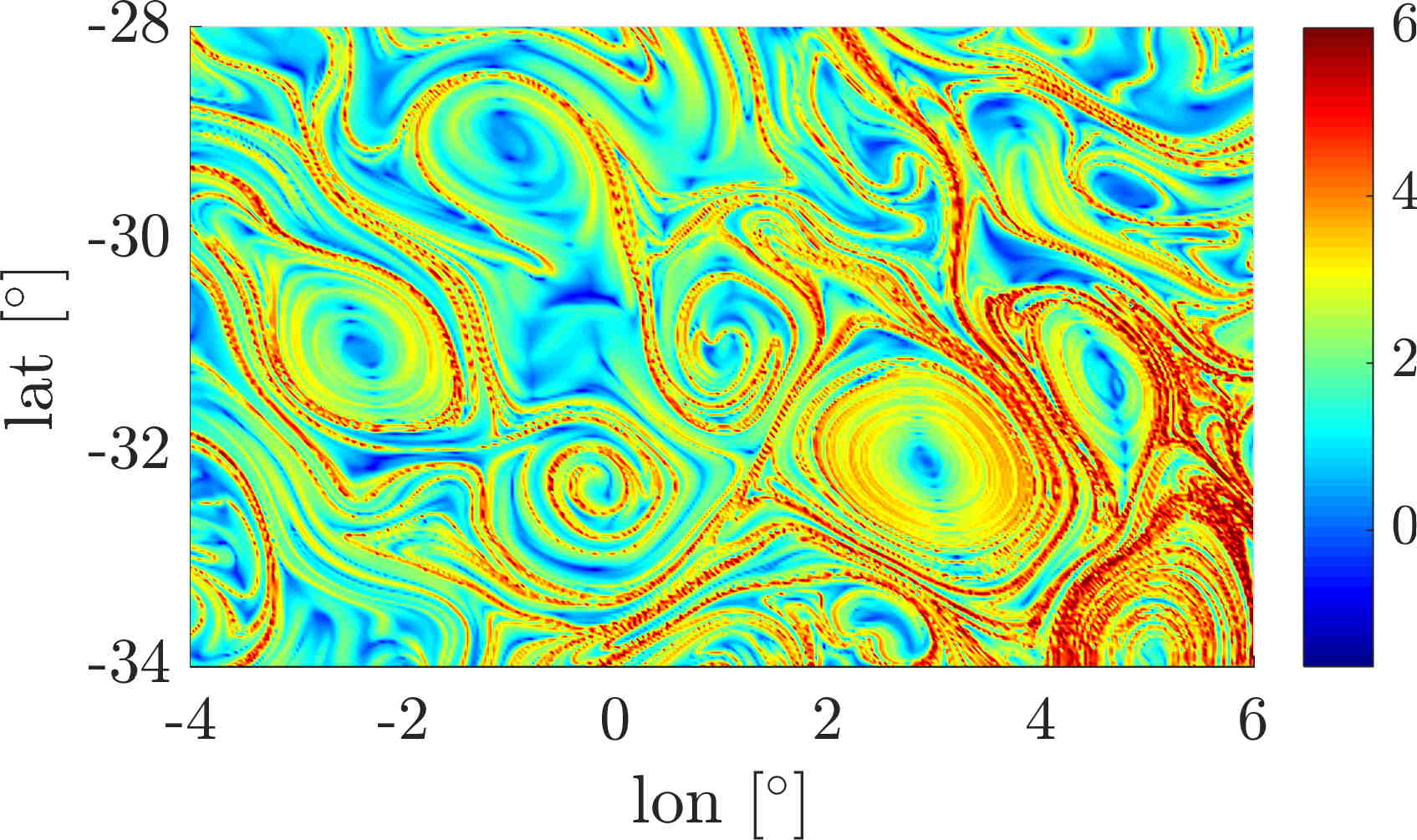}
\caption{Ocean flow: FTLE field ($\log_{10}$ color coding)}
\label{fig:ocean_FTLE}
\end{center}
\end{figure}

After a flow duration of 90 days the initial rectangular domain shown in Figure \ref{fig:ocean_FTLE} becomes strongly distorted and filamented.
In this situation, we are not interested in coherent sets that intersect this heavily filamented boundary and so we restrict to coherent sets in the interior of the domain.
This can be achieved by replacing the (natural) homogeneous Neumann boundary conditions with homogeneous Dirichlet boundary conditions.
In terms of numerical eigenvector computations, to apply homogeneous Dirichlet boundary conditions, one simply sets all rows and columns of $\hat{D}^t$ and $M$ in (\ref{Deqn}) corresponding to nodes on the boundary of the domain to rows and columns of zeros.

\paragraph{Cauchy-Green approach.}

As in \cite{HaHa16a}, we use a uniform grid of $250\times 150$ points, the associated Delaunay triangulation, yielding  roughly 73000 triangles and Gauss quadrature of degree 1 (i.e.~one quadrature point per element).  The evaluation of $C_t^{-1}$ takes ca.~46 s, the assembly of the matrices 0.3 s and the solution of the eigenproblem 3 seconds.

Figure~\ref{fig:ocean_spectra} (left) shows the largest 10 eigenvalues of the dynamic Laplacian with $\mathcal{T}=\{t_0,t_0+90\}$, the results do not change significantly when using more intermediate time steps.  Note that there appears to be a gap after the third eigenvalue. Figure~\ref{fig:ocean_CG_evs} (left column) shows the associated  first three eigenvectors.  The corresponding coherent sets (as identified by k-means clustering the first three eigenvectors, cf.~Fig.~\ref{fig:ocean_cluster4}, left) nicely agree with those identified in \cite{HaHa16a}, Fig.~9 (the rightmost vortex from Fig.~9 in that work appears in a lower eigenfunction which we do not show here).
% It is, however, identified by the 9th eigenfunction (cf.~Figs.~\ref{fig:ocean_ev9} (left) and \ref{fig:ocean_cluster5} (left)). Note that due to the variational characterization of the eigenvalues, the magnitude of the eigenvalue indicates how much this set resists filamentation (cf.~Fig.~\ref{fig:ocean_cs_evol}), and indeed this rightmost vortex is less coherent that the three we identified.

We deliberately chose the same resolution as in \cite{HaHa16a} here, the results for the leading three eigenvectors do not change significantly, however, when decreasing the grid resolution down to $100\times 60$ points only.

\begin{figure}[htbp]
\begin{center}
\includegraphics[width=0.4\textwidth]{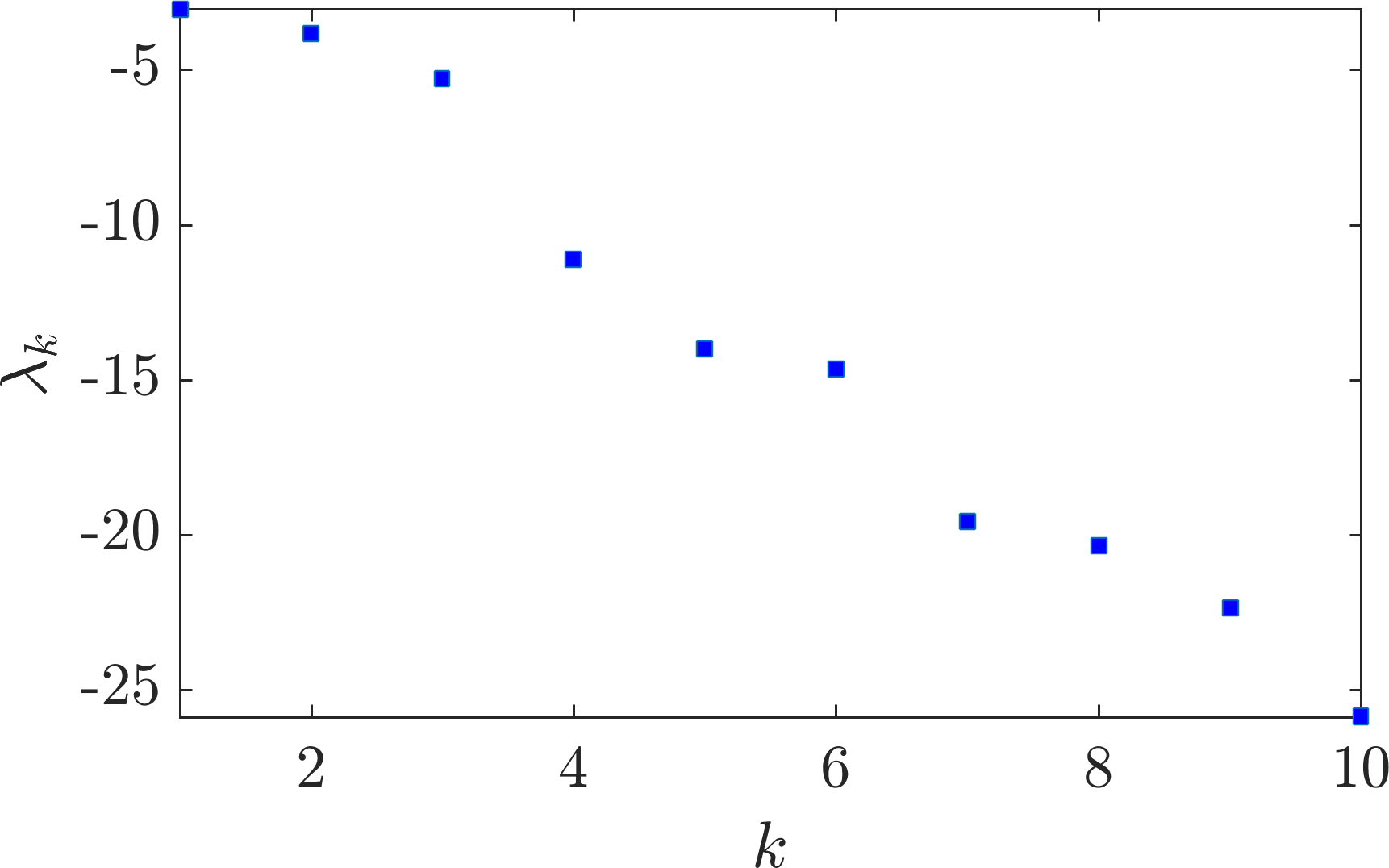}
\includegraphics[width=0.4\textwidth]{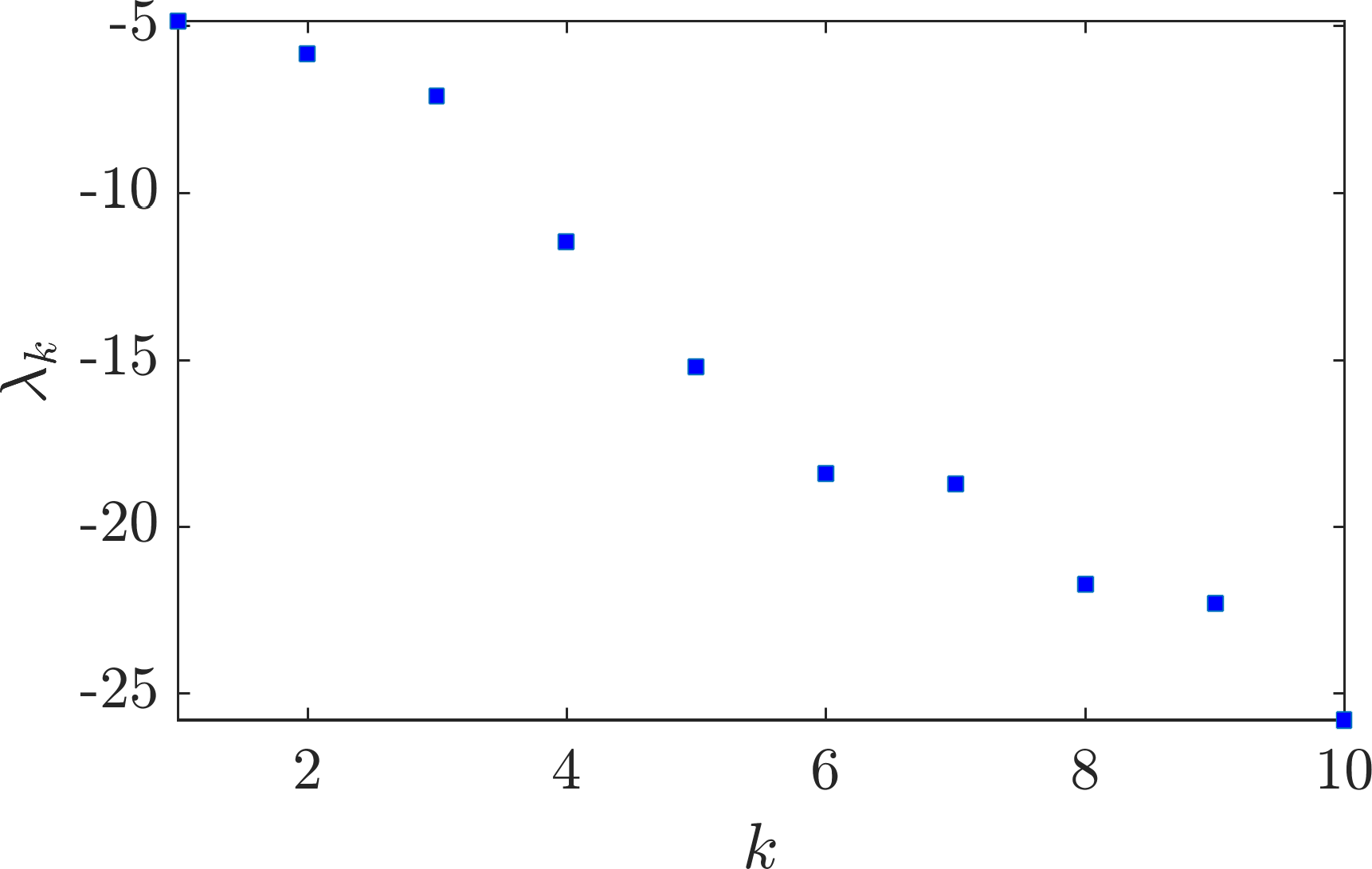}
\caption{Ocean flow: spectrum of the dynamic Laplacian using the Cauchy-Green approach (left) and the adaptive transfer operator approach (right).}
\label{fig:ocean_spectra}
\end{center}
\end{figure}

\begin{figure}[htbp]
\begin{center}
\includegraphics[width=0.48\textwidth]{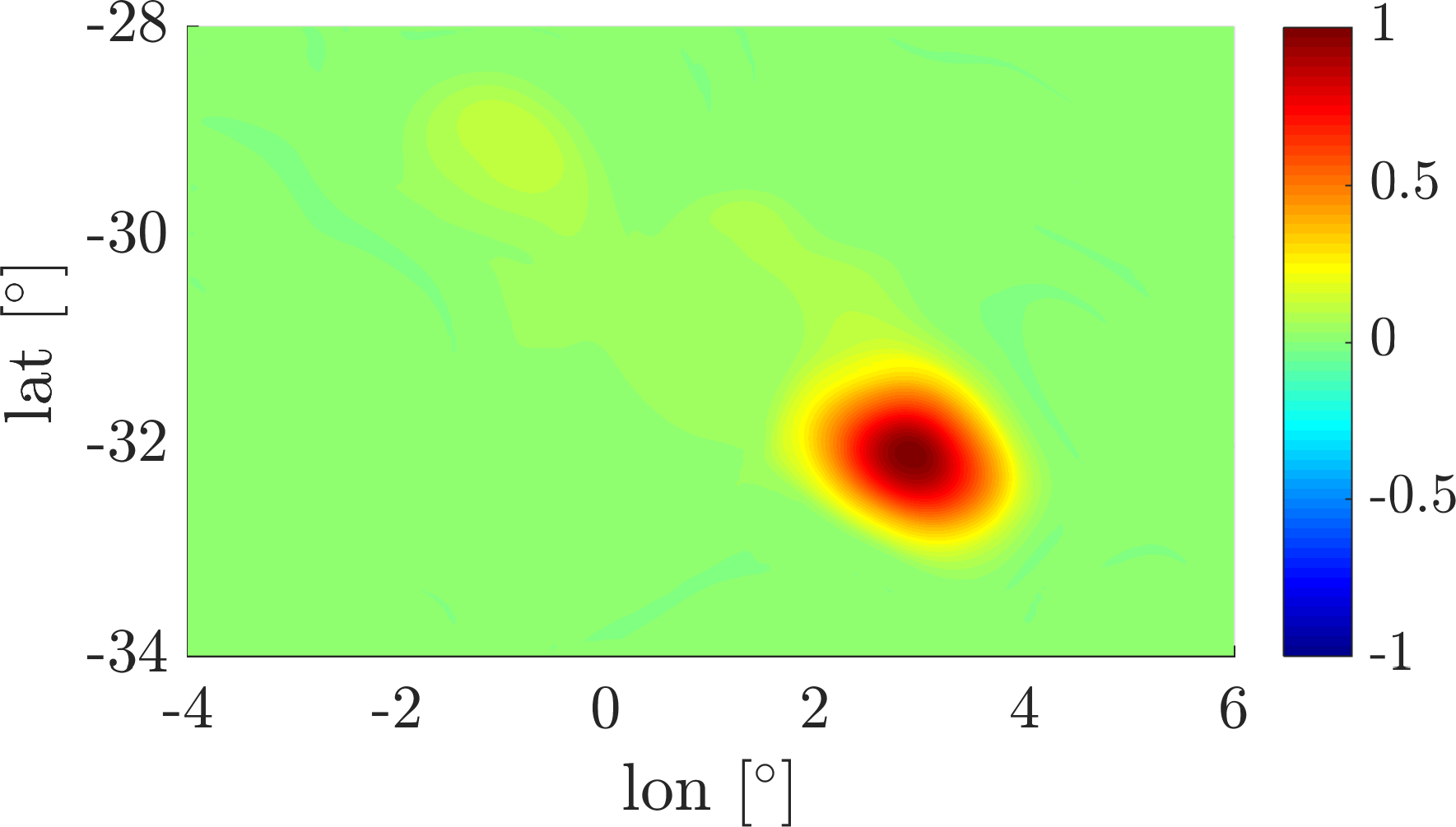}
\includegraphics[width=0.48\textwidth]{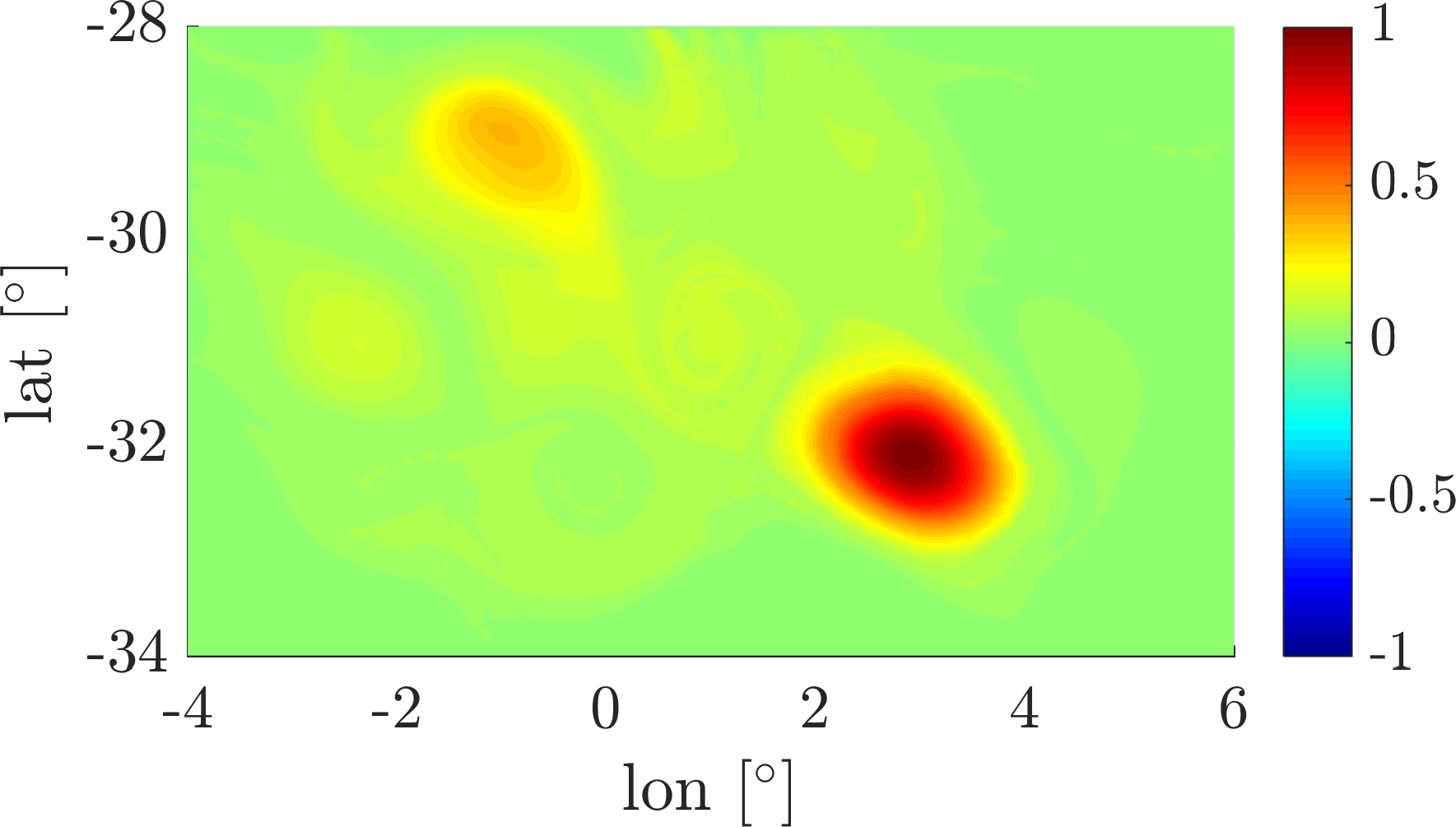}
\includegraphics[width=0.48\textwidth]{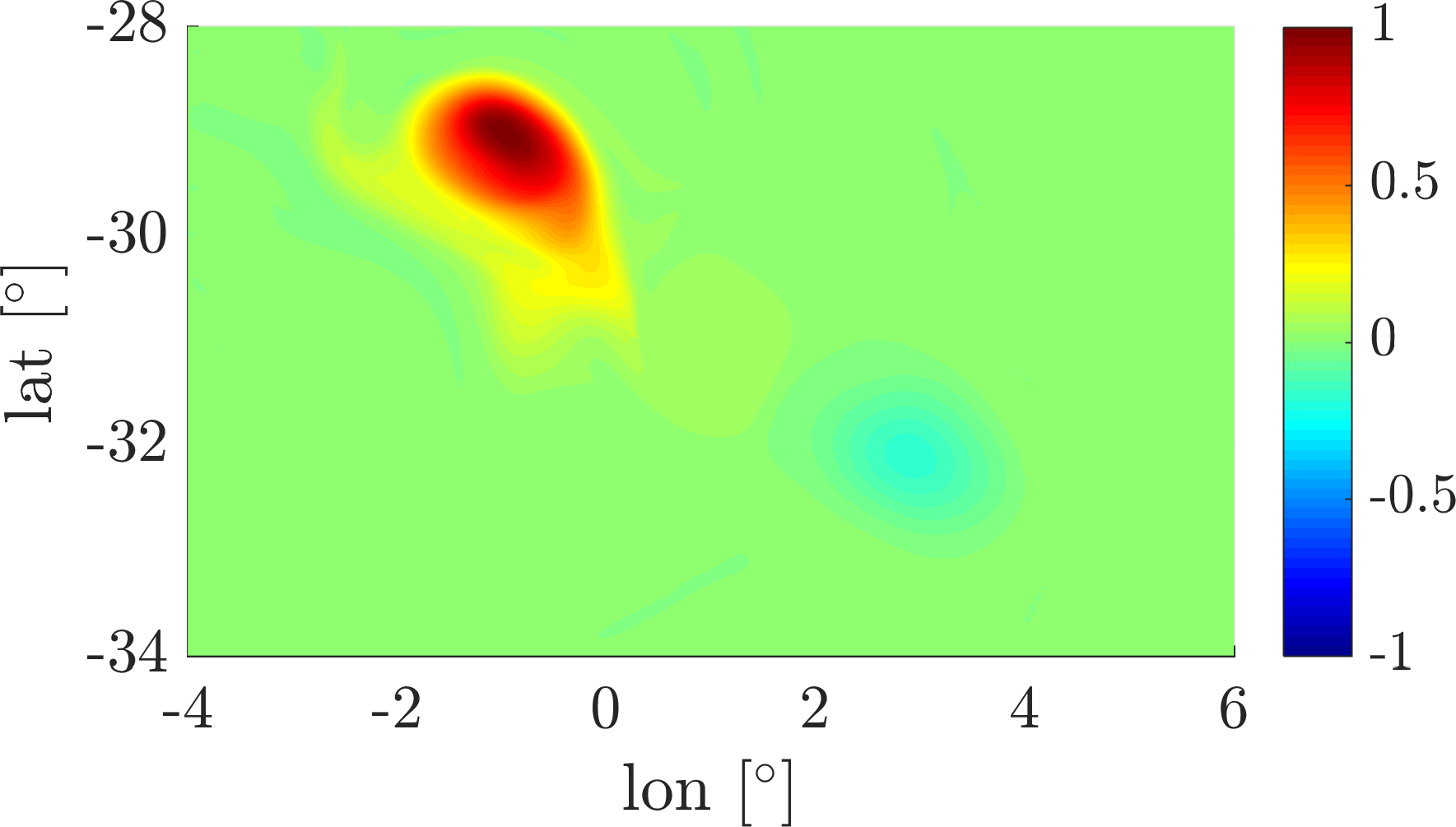}
\includegraphics[width=0.48\textwidth]{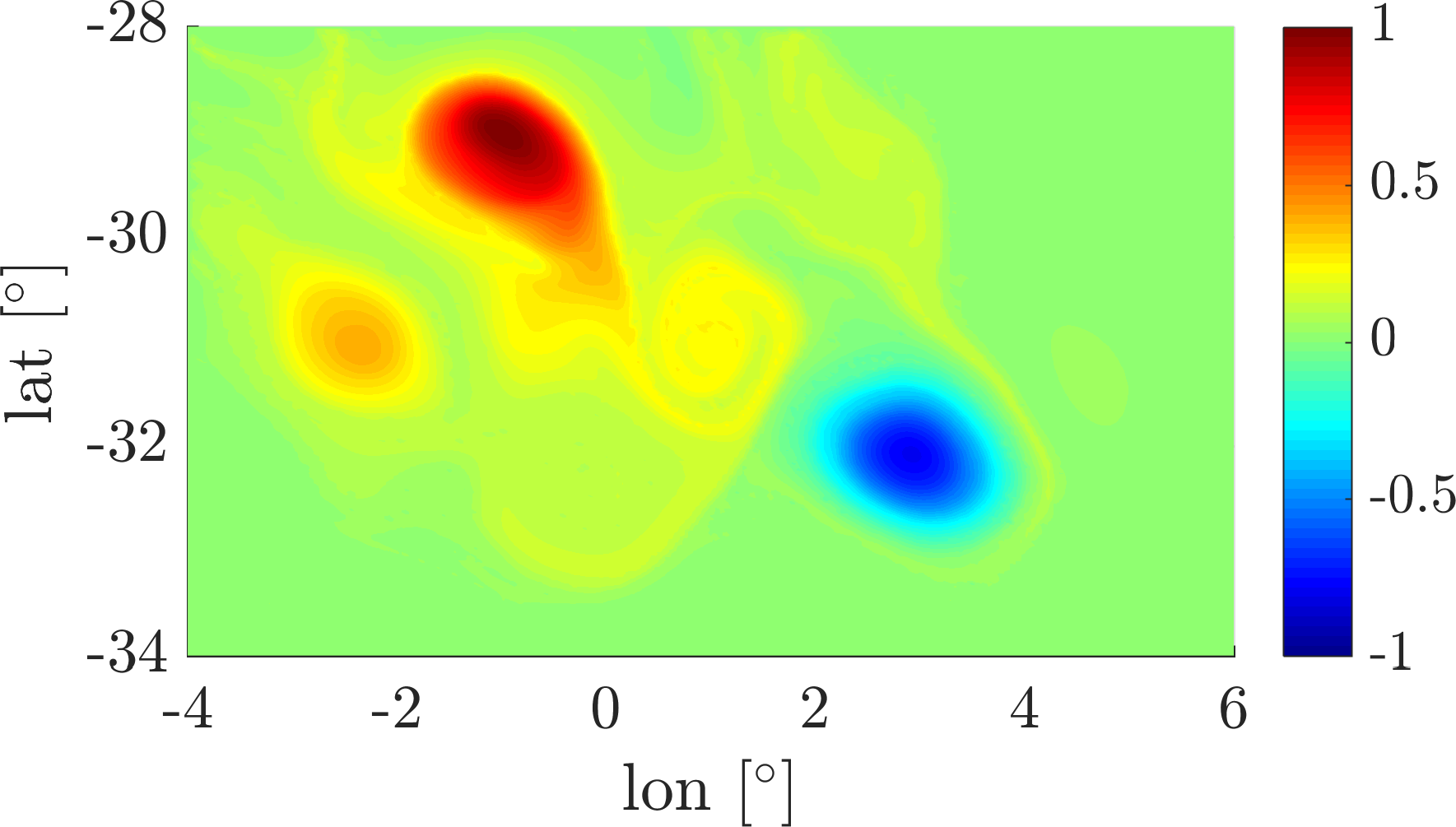}
\includegraphics[width=0.48\textwidth]{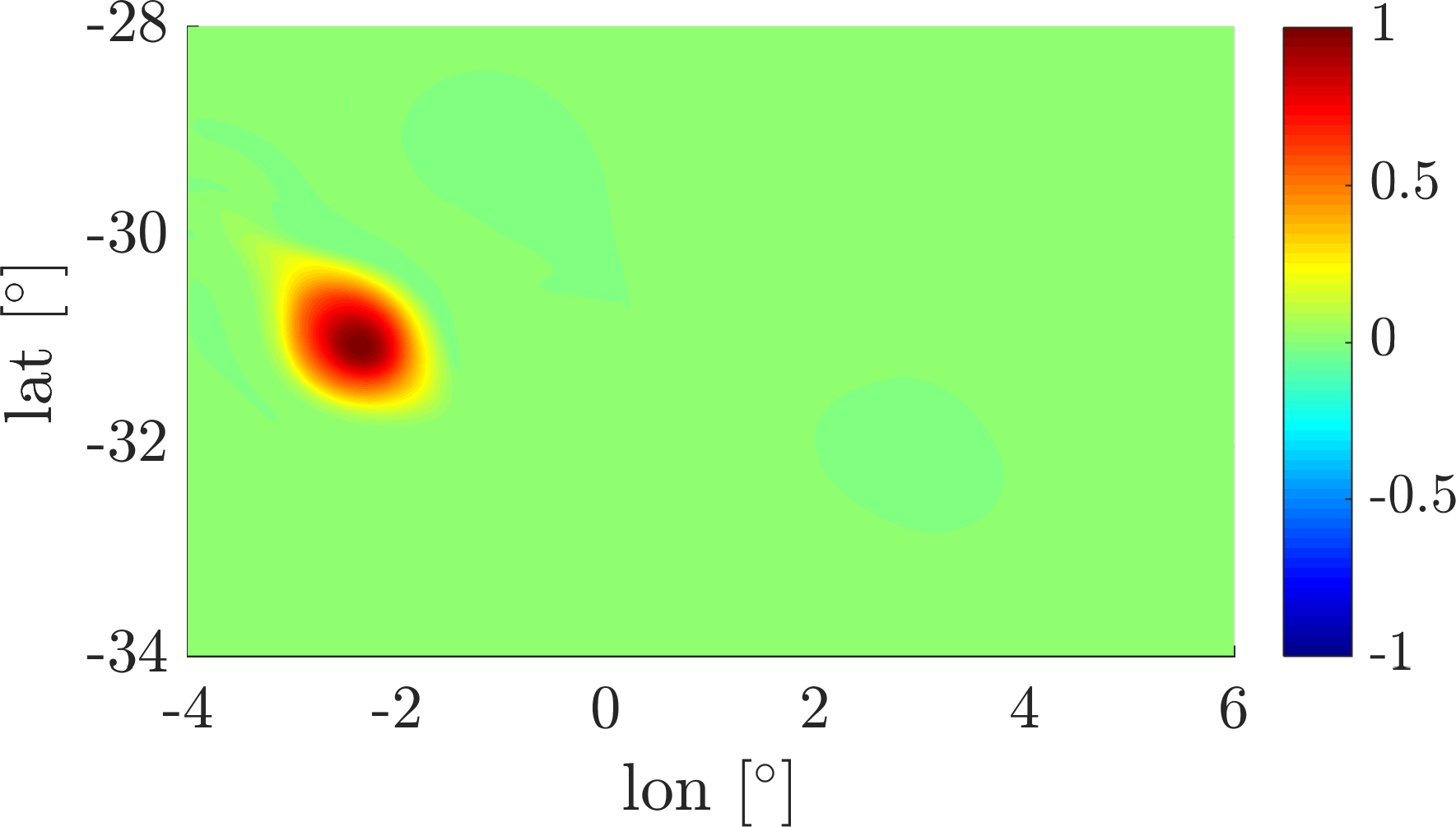}
\includegraphics[width=0.48\textwidth]{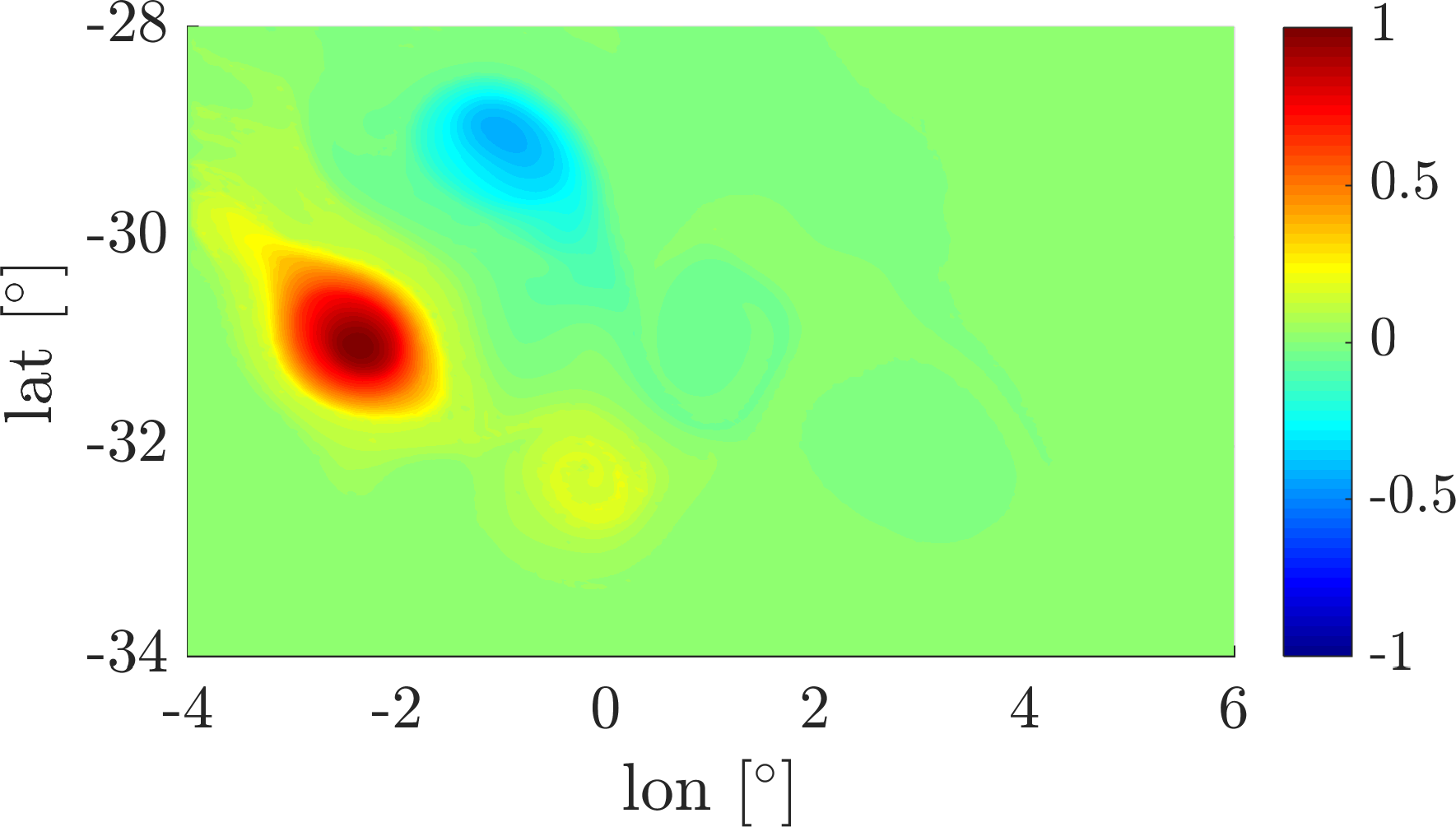}
\caption{Ocean flow: the first three eigenvectors of the dynamic Laplacian using the Cauchy-Green approach (left) and the adaptive transfer operator approach (right).}
\label{fig:ocean_CG_evs}
\end{center}
\end{figure}

\begin{figure}[htbp]
\begin{center}
\includegraphics[width=0.48\textwidth]{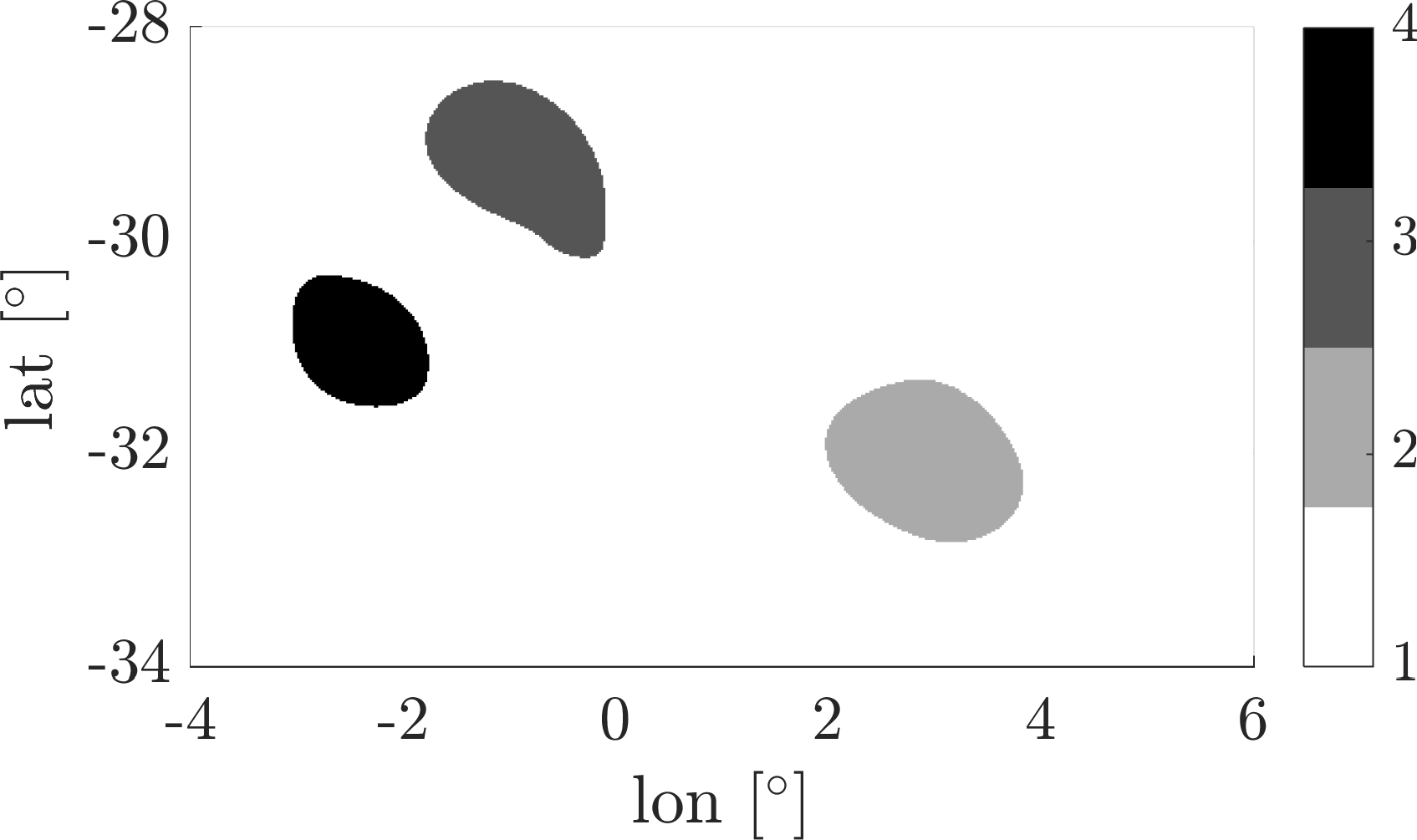}
\includegraphics[width=0.48\textwidth]{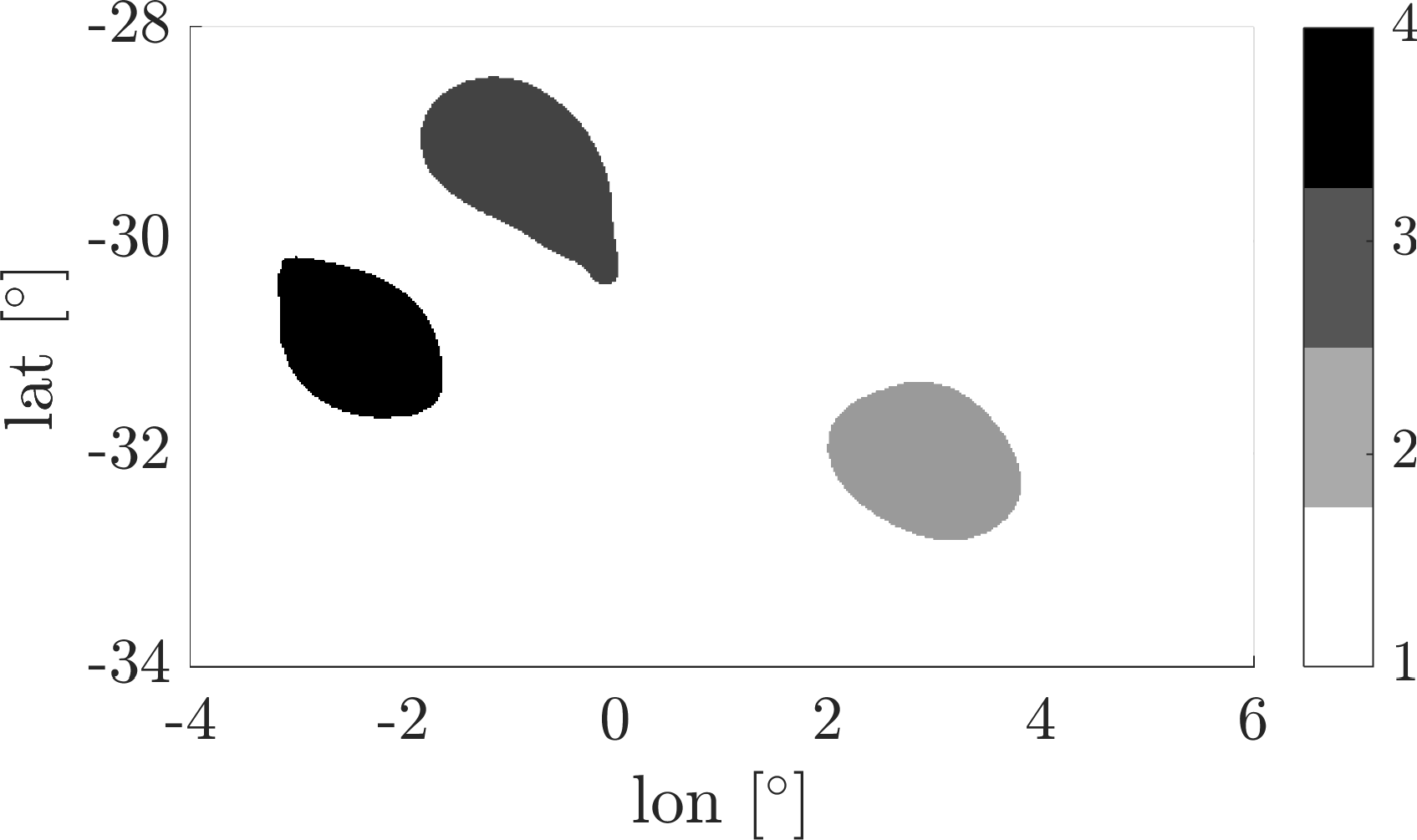}
\caption{Ocean flow: coherent sets identified using the first 3 eigenfunctions by the Cauchy-Green approach (left) and by the adaptive transfer operator approach (right).}
\label{fig:ocean_cluster4}
\end{center}
\end{figure}

\begin{figure}[htbp]
\begin{center}
\includegraphics[width=0.55\textwidth]{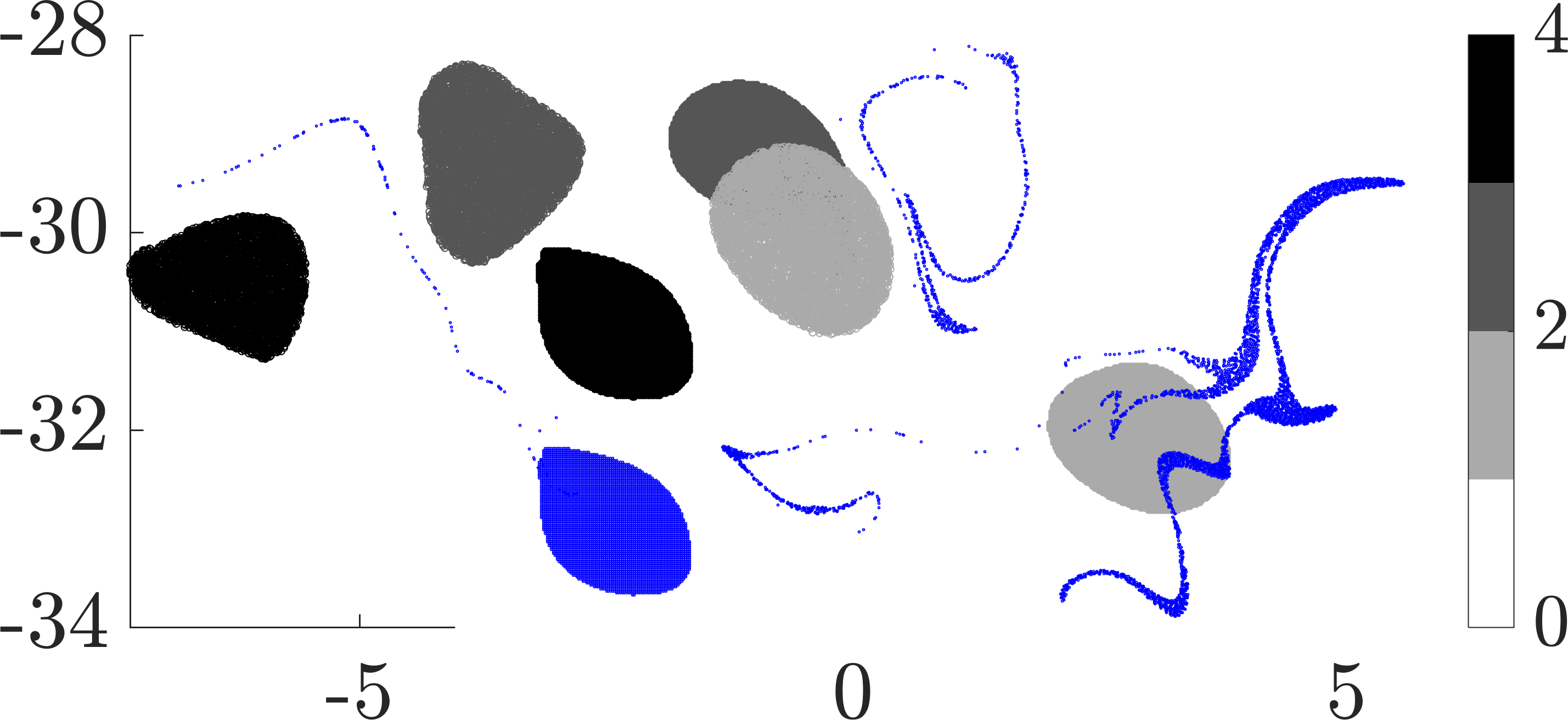}
\caption{Ocean flow: coherent sets from Fig.~\ref{fig:ocean_cluster4} (left) and their images (gray) as well as a non-coherent set and its image (blue).}
\label{fig:ocean_cs_evol}
\end{center}
\end{figure}

\paragraph{Adaptive transfer operator approach.}

The approach from Section~\ref{sec:colladap} yields  comparable results as shown in the right columns of Fig.~\ref{fig:ocean_spectra}.  The computation times are: time integration: 4 s, assembly: 1.4 s, solution of the eigenproblem: 16 seconds.
We can decrease the resolution down to a $150\times 90$ grid before the results start to deteriorate significantly.

\paragraph{Missing data.}

We finally analyze how well the coherent sets can be recovered in the ocean flow experiment when data is missing.  Again, we use the same set of nodes and $|\mathcal{T}|=10$ equidistant intermediate time steps.  We then randomly delete 70\% of the data, yielding roughly 12000 data points per time step.  For comparison, in the experiment with full data and two time steps above we used around 37500 nodes per time step. The result shown in Figure~\ref{fig:ocean_missing} is  qualitatively the same as the one with full data in Figure~\ref{fig:ocean_cluster4}.  We note, however, that when we delete even more data, these coherent sets will not be recovered as clearly any more.
\begin{figure}[htbp]
\centering
\includegraphics[width=0.48\textwidth]{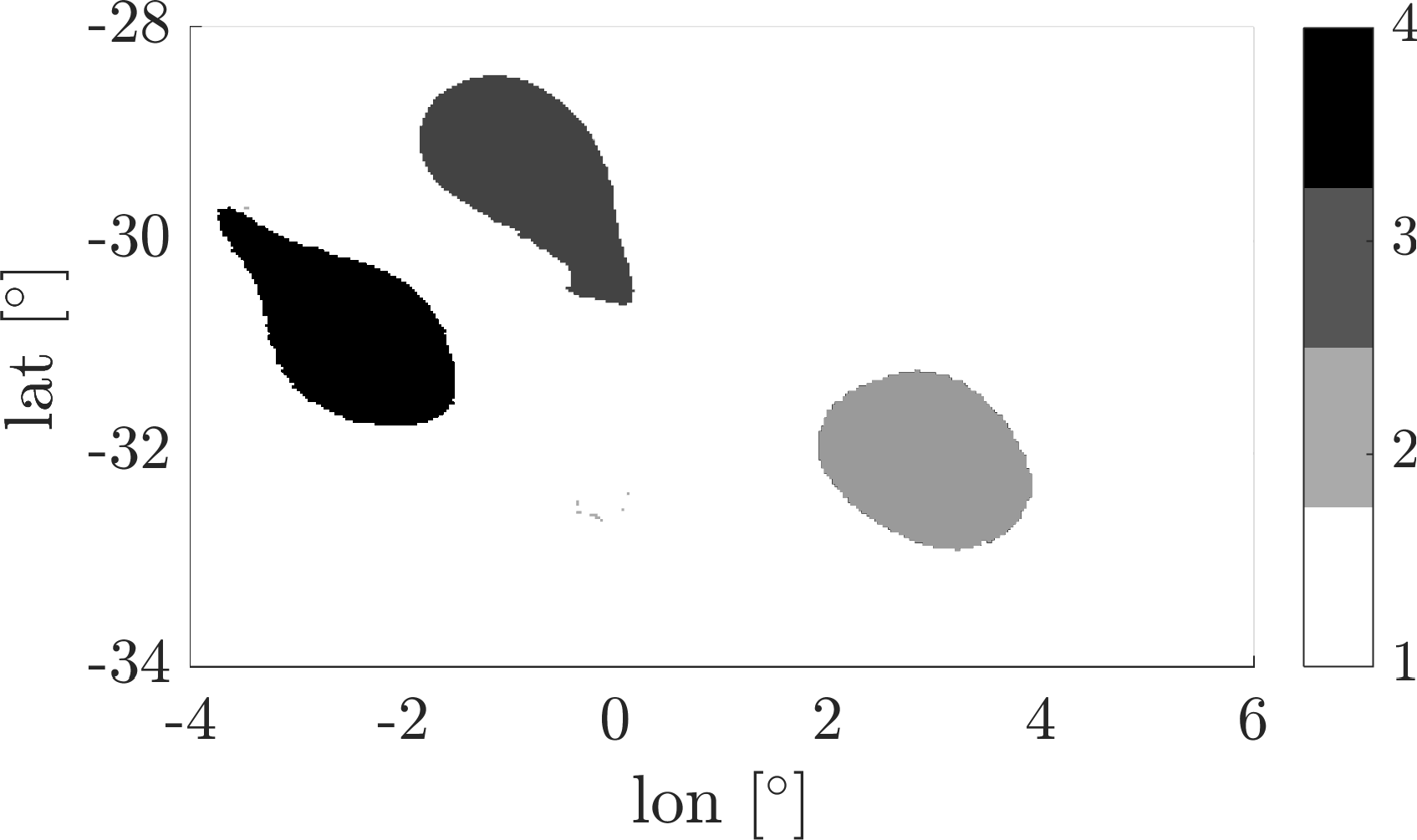}
%\includegraphics[width=0.48\textwidth]{}
%\includegraphics[width=0.48\textwidth]{ocean_missing_ev9}
%\caption{Ocean flow with missing data: eigenfunctions 1-3 (left column), spectrum, coherent sets and eigenfunction 9 (right column, from top to bottom), based on the adaptive transfer operator approach from Section~\ref{sec:colladap} with 10 intermediate time steps  and 70\% of the data removed.}
\caption{Ocean flow with missing data: coherent sets based on the adaptive transfer operator approach from Section~\ref{sec:colladap} with 10 intermediate time steps  and 70\% of the data randomly removed.}
\label{fig:ocean_missing}
\end{figure}

\subsection{Experiment: The unsteady ABC flow (3D)}
\label{exp:ABC_CG}

For a 3D experiment, we consider the unsteady ABC flow, cf.~\cite{haller01}, given by
\begin{align}
\dot x & = (A+\tfrac12 t\sin(\pi t))\sin z + C \cos y\\
\dot y & = B\sin x + (A+\tfrac12 t\sin(\pi t))\cos z\\
\dot z & = C\sin y + B\cos x
\end{align}
on the 3-torus, with parameter values $A = \sqrt{3}, B = \sqrt{2}, C = 1$ on the time interval $t\in [0,1]$.

\paragraph{Cauchy-Green approach.}

We employ a Delaunay triangulation on a regular grid of $25\times 25\times 25=15625$ points, yielding about 83000 tetrahedra, Gauss quadrature of degree 1, i.e.\ one quadrature point per tetrahedron and two time steps, i.e.~$\mathcal{T}=\{0,1\}$. The integration of the variational equation takes 11 s, the assembly of the matrices 0.9 s and the solution of the eigenproblem 9 seconds.  Figure~\ref{fig:ABC_spectra} (left) shows the spectrum of the discrete dynamic Laplacian,  Figure~\ref{fig:ABC_evs} (top) the 2nd (left) and 3rd (right) eigenvector.
\begin{figure}[htbp]
\begin{center}
\includegraphics[width=0.32\textwidth]{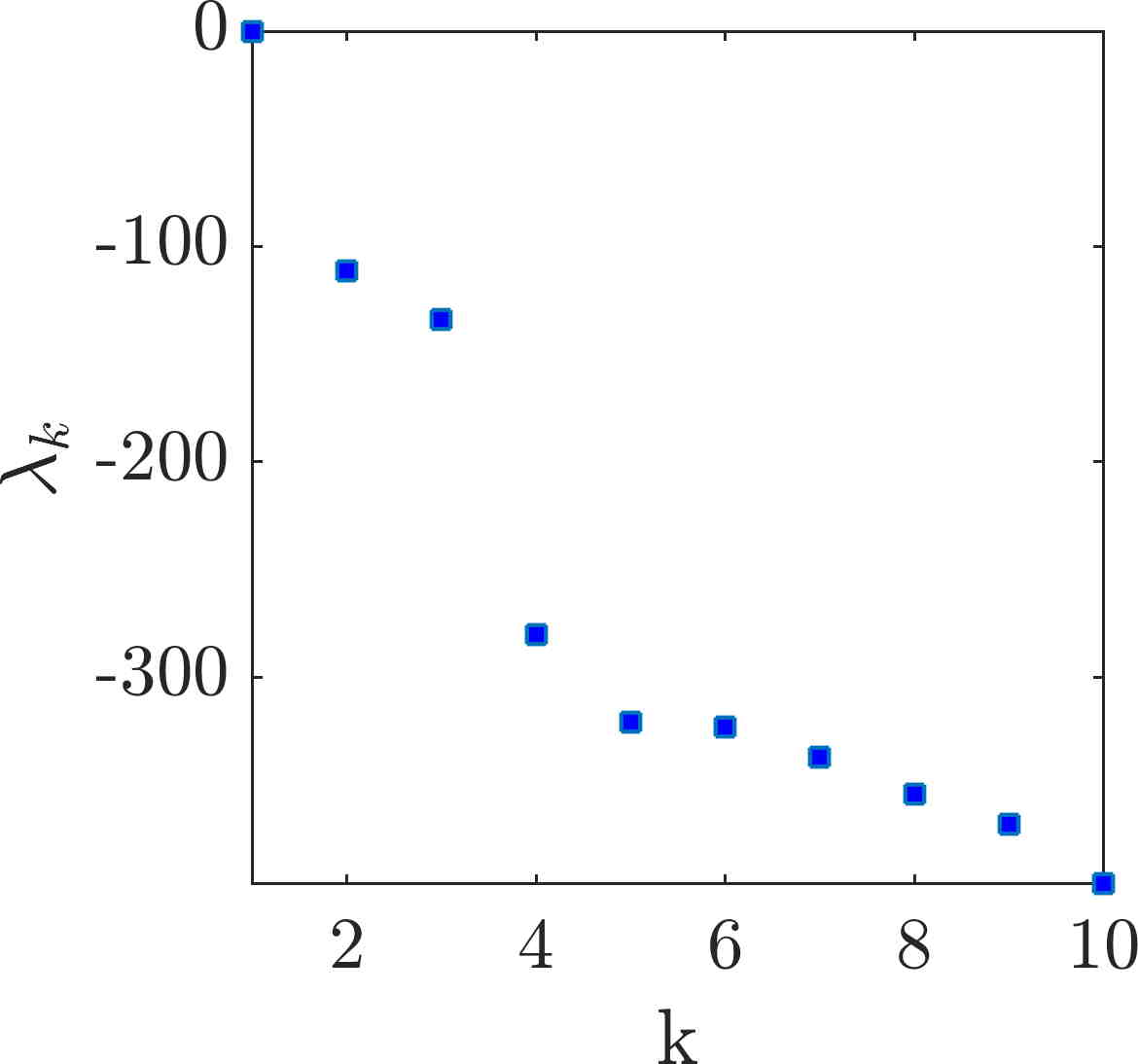}
\includegraphics[width=0.32\textwidth]{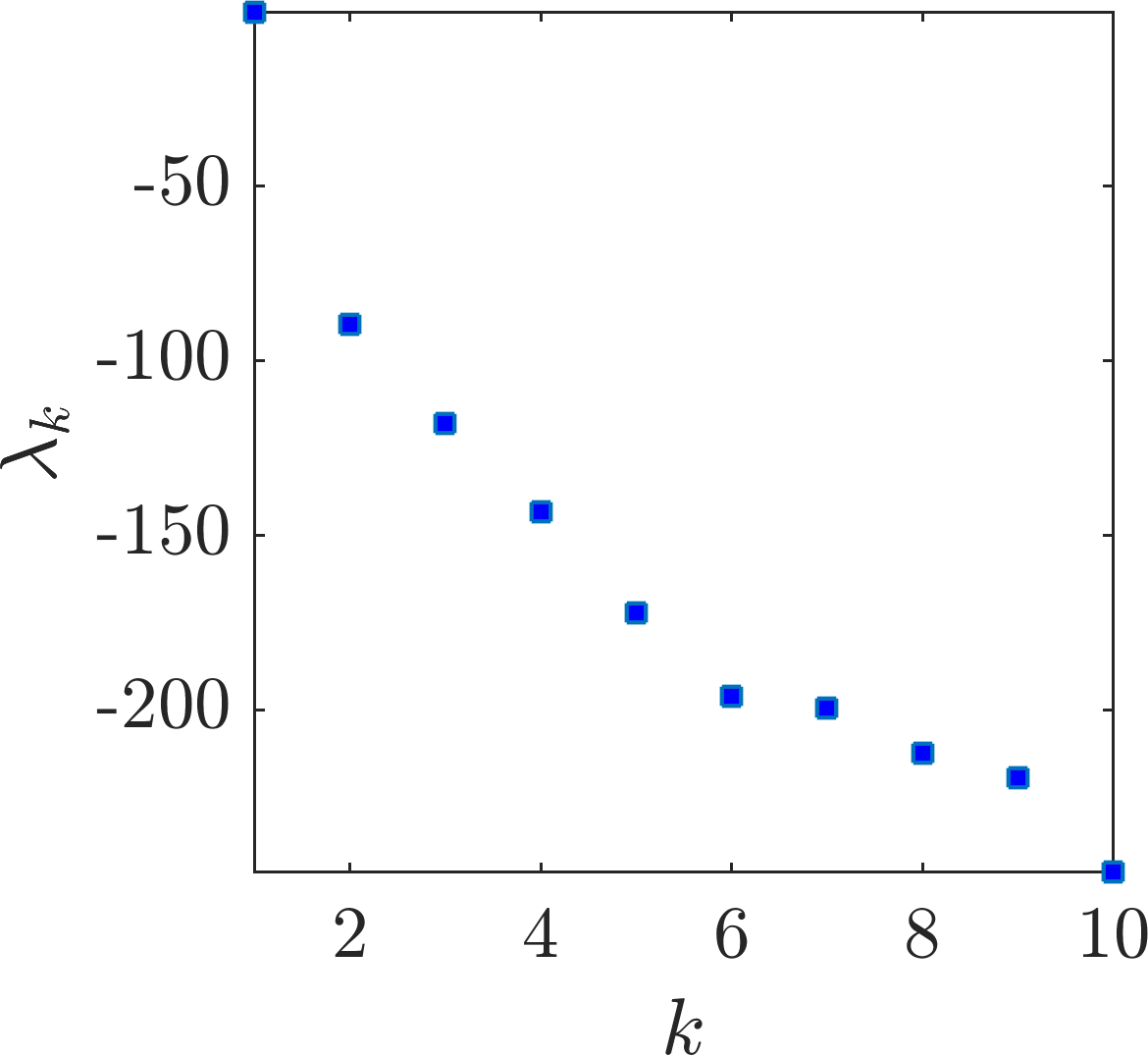}
\includegraphics[width=0.32\textwidth]{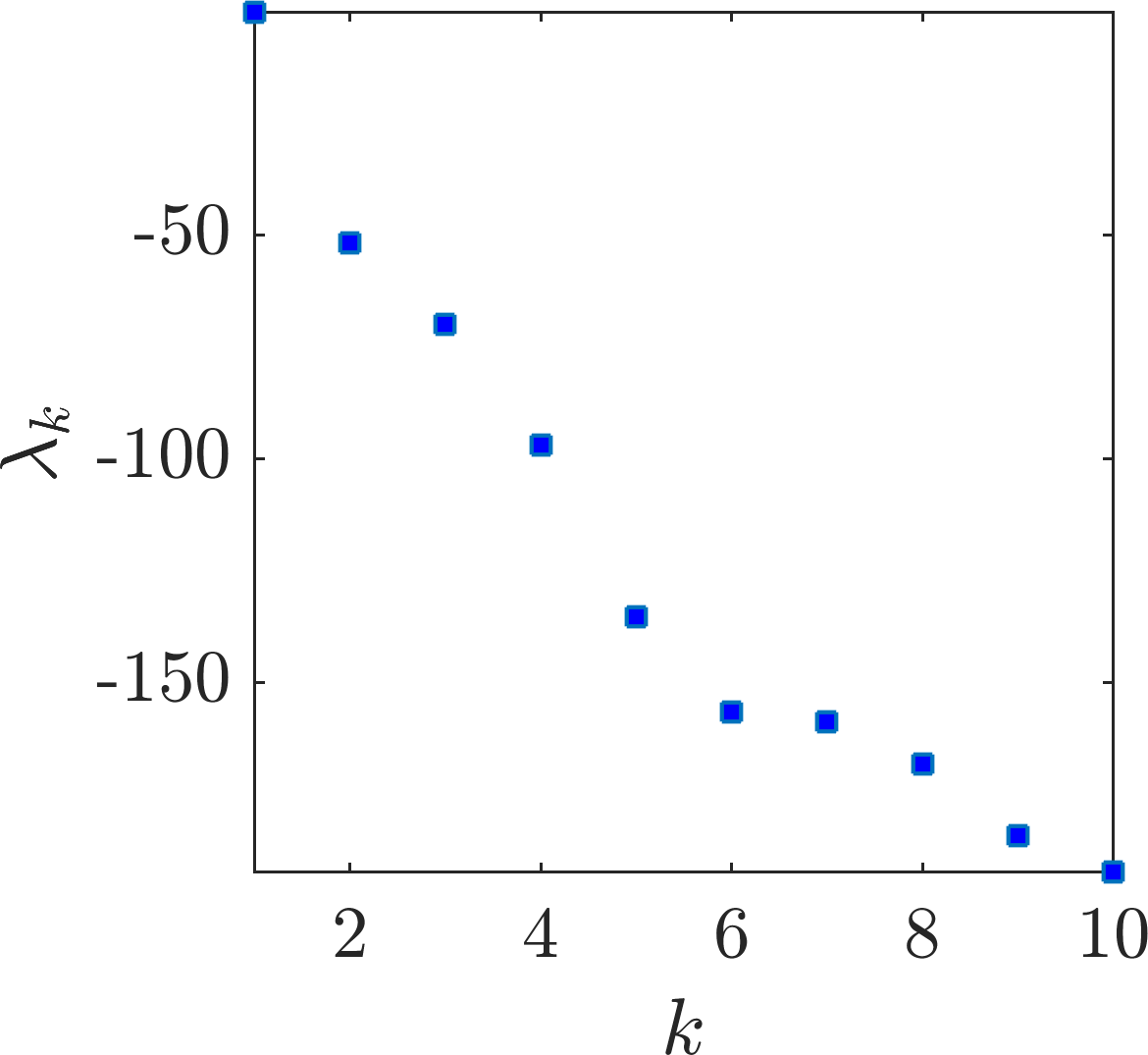}
\caption{ABC flow: Spectrum of the dynamic Laplacian for the Cauchy-Green (left) the non-adaptive (center) and the adaptive (right) transfer operator approach.}
\label{fig:ABC_spectra}
\end{center}
\end{figure}
\begin{figure}[htbp]
\begin{center}
\includegraphics[width=0.49\textwidth]{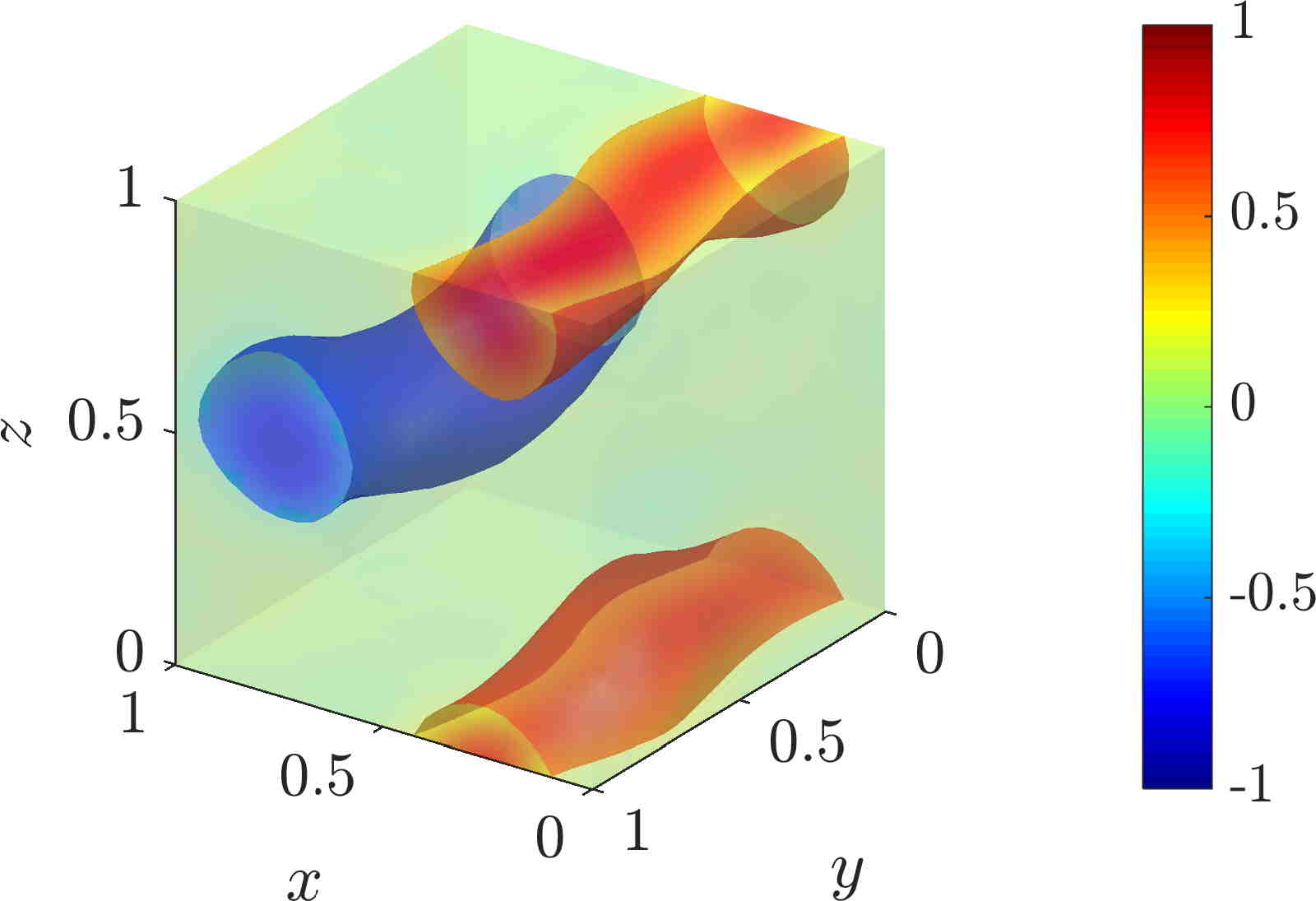}
\includegraphics[width=0.49\textwidth]{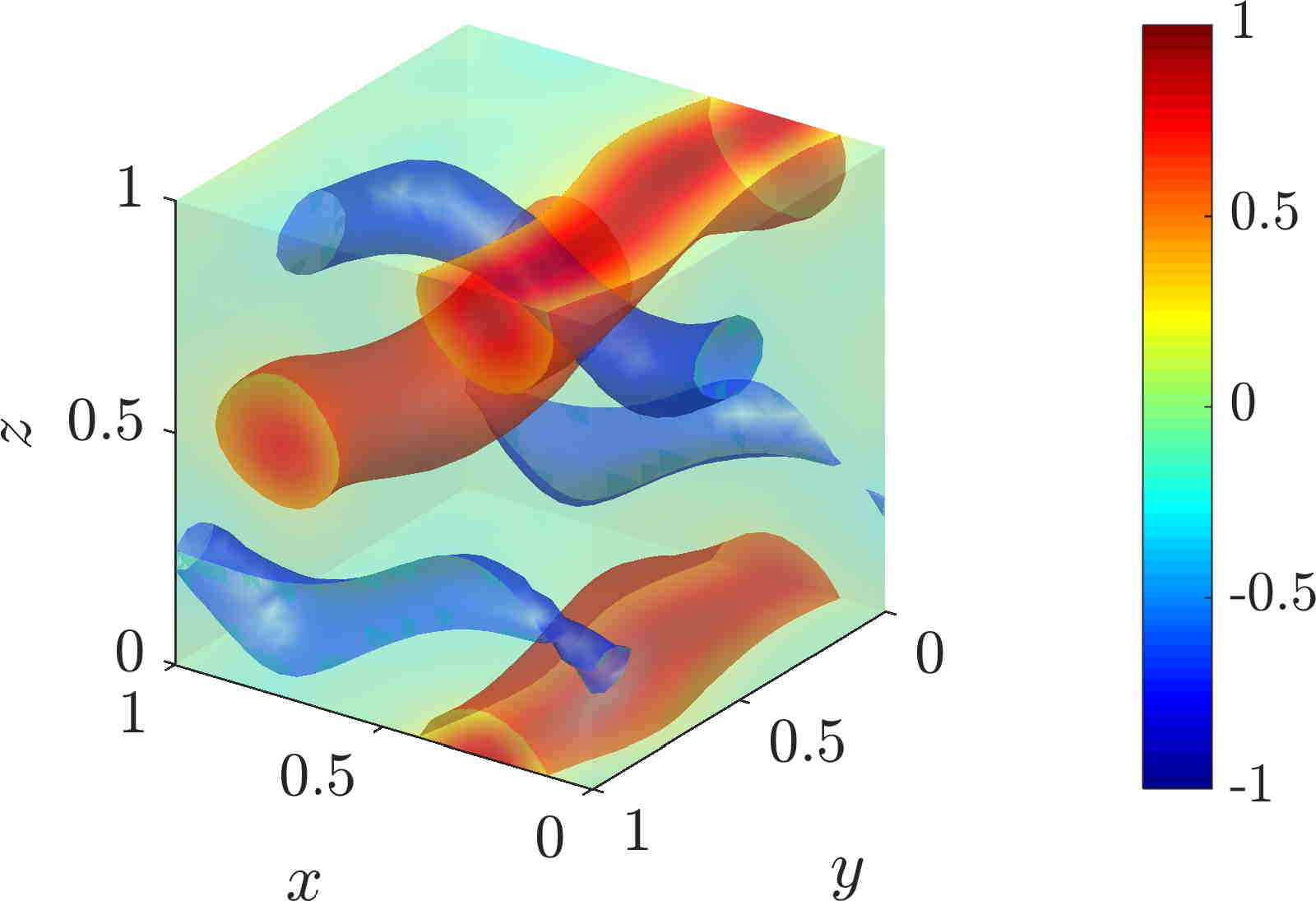}
\includegraphics[width=0.49\textwidth]{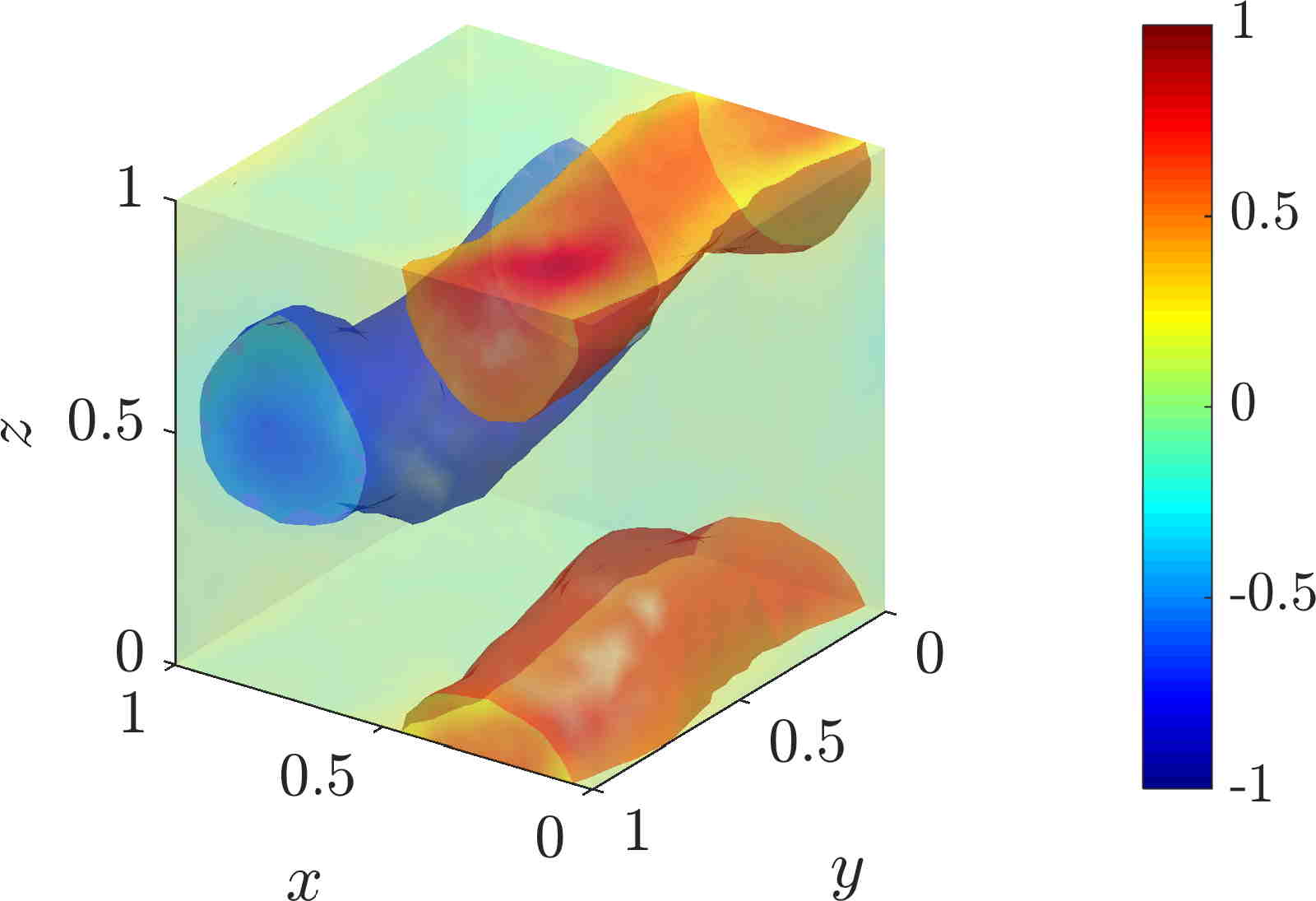}
\includegraphics[width=0.49\textwidth]{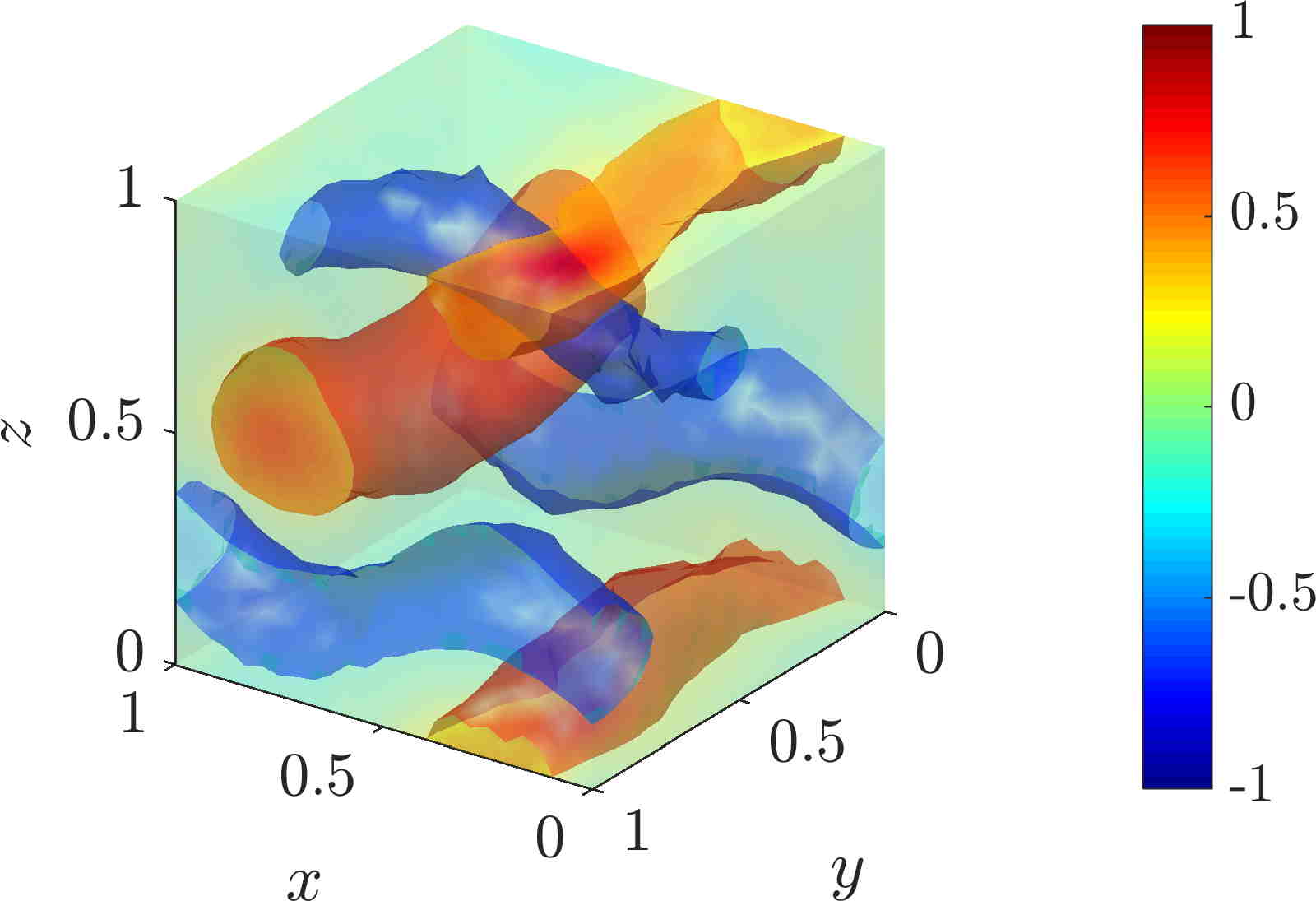}
\includegraphics[width=0.49\textwidth]{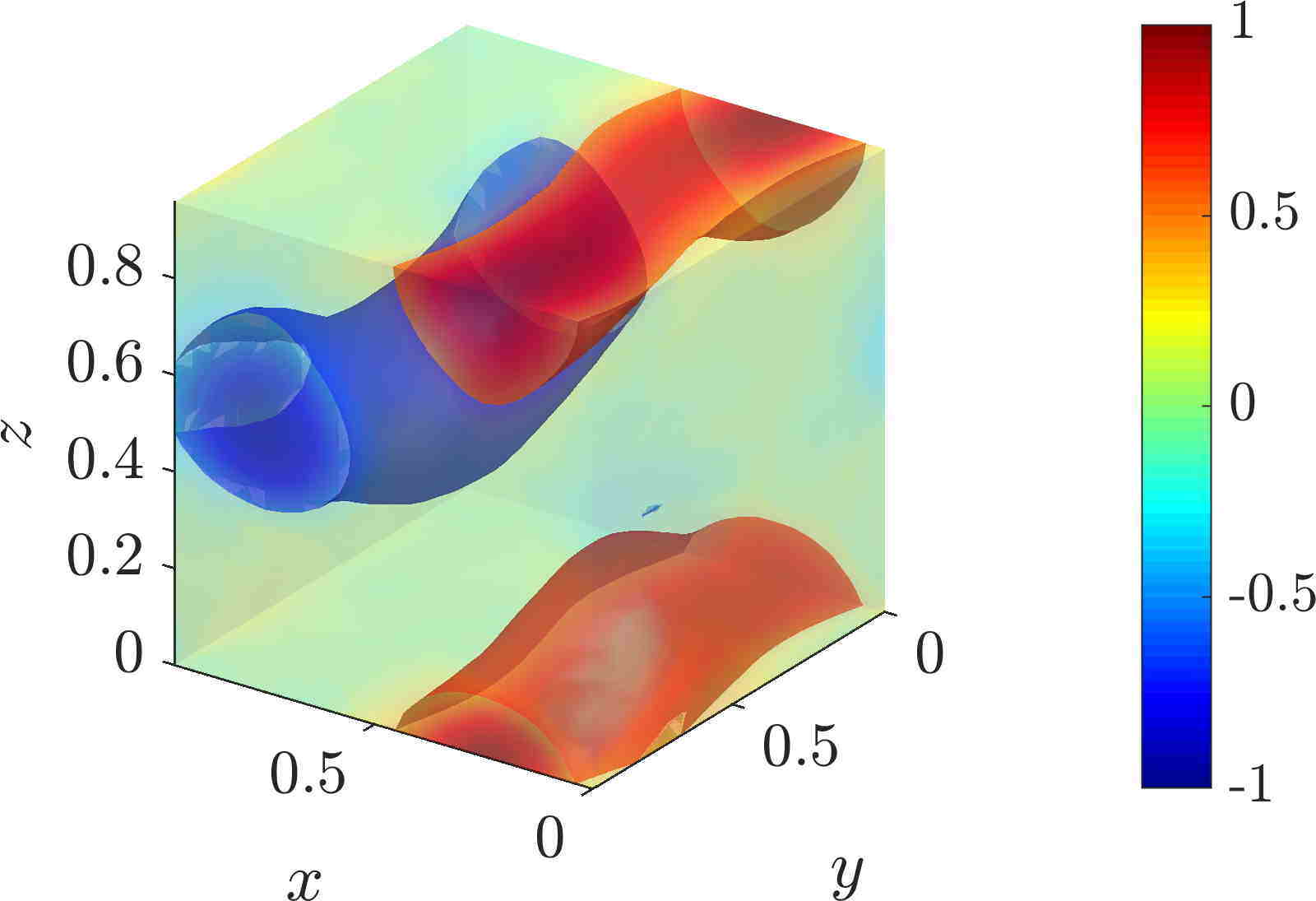}
\includegraphics[width=0.49\textwidth]{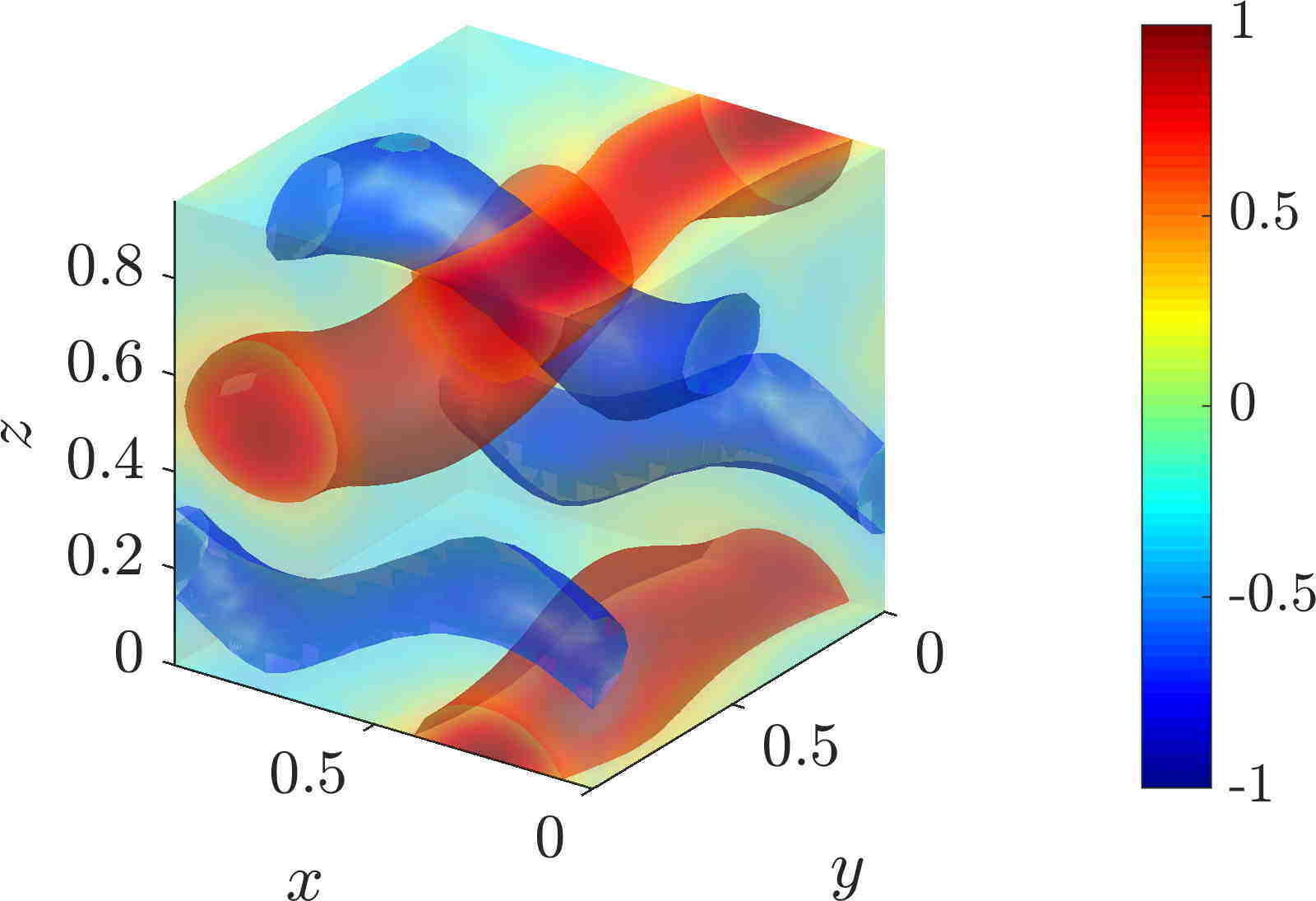}
\caption{ABC flow: 2nd (left) and 3rd (right) eigenvector using the Cauchy-Green  (top), the transfer operator (center) and the adaptive transfer operator approach (bottom).}
\label{fig:ABC_evs}
\end{center}
\end{figure}

\paragraph{Transfer operator approaches.}

Using the same parameters as for the Cauchy-Green approach, we obtain the spectra shown in Figure~\ref{fig:ABC_spectra} (center and right) and the eigenvectors in Figure~\ref{fig:ABC_evs} (center and bottom).  This yields qualitatively the same results as the Cauchy-Green approach, the eigenfunctions from the non-adaptive transfer operator approach, however, appear to be less smooth. This seems to be due to the relatively coarse approximation of the transfer operator.  Computation times are: non-adaptive: time integration 0.4 s, assembly 0.6 s, computation of the $\alpha$-matrix 1.4 s, eigenproblem 160 s; adaptive: integration 0.4 s, assembly 1.4 s, eigenproblem 40 s.  Note that the increased computation time for solving the eigenproblem is due to the fact that the stiffness matrix has more nonzero entries ($10\times$ as many for the non-adaptive and twice for the adaptive TO approach in comparison to the CG approach, cf.\ the remark in Section~\ref{sec:collnonadap} on the sparseness of the stiffness matrix in this case).

\section*{Acknowledgements}

We thank Daniel Karrasch for helpful comments and contributions to the code, and Michael Feischl for helpful comments.  GF thanks the Department of Mathematics at the Technical University Munich for hospitality during a visit in May 2016, and the John von Neumann Professorship scheme for financial support for this visit.  GF was supported by an ARC Future Fellowship. OJ was supported by the Priority Programme SPP 1881 Turbulent Superstructures of the Deutsche Forschungsgemeinschaft.
\begin{appendix}

\section{The non volume-preserving case}

We now briefly sketch the theory behind the case of the underlying flow $\Phi^t$ not being volume-preserving, where we wish to track coherent masses according to some smooth initial mass distribution on $\M\subset\R^d$ given by a probability measure $\mu^0$ ($\mu^0=m$ would correspond to volume), and where the domain is possibly curved with the curvature described by a Riemannian metric.
The measure $\mu^0$ is evolved by $\Phi^t$ according to $\mu^t:=\mu\circ \Phi^{-t}$.
All computational aspects for data embedded in Euclidean space are described in Section \ref{sect:nvp}.
%$h_\mu$ ($h_\mu\equiv 1$ would correspond to volume).

The expressions (\ref{dyniso}) and (\ref{sobolev}) can be naturally extended to cover this situation.
Firstly (\ref{dyniso}), where the size of the disconnector $\Gamma$ is computed according to the evolved measure $\mu^t$:
\begin{equation*}
\label{cheeger0}
\mathbf{h}(\Gamma):=\frac{\frac{1}{|\mathcal{T}|}\sum_{t\in\mathcal{T}} \mu_{d-1}^t(\Phi^t\Gamma)}{\min\{\mu_{d-1}^0(\M_1),\mu_{d-1}^0(\M_2)\}},
\end{equation*}
where $\mu_{d-1}^t$ is the induced measure on $d-1$-dimensional surfaces at time $t$.
Secondly, (\ref{sobolev}),
\begin{equation*}
\mathbf{s}:=\inf_{f\in C^\infty(\M,\mathbb{R})} \frac{\frac{1}{|\mathcal{T}|}\sum_{t\in\mathcal{T}}\|(|\nabla_{m^t}\Phi_*^tf|_{m^t})\|_{\mu^t}}{\inf_\alpha \|f-\alpha\|_{\mu^{0}}},
\end{equation*}
where the subscripts $m^t$ denote that the computations are taken with respect to the Riemannian metric $m^t$ on $\Phi^t(\M)$, which in many cases will be either the Euclidean metric, or if $\M$ is of dimension lower than $d$, the induced metric arising from the Euclidean metric.
See \cite{FrKw16} for details.

%Restricting our attention here to the extension of the equations
Finally, we replace (\ref{eq:eigenproblem}) with
%the transfer operator;  define the density on $\Phi^t(\M)$ as the pushforward
%of $h_\mu$, namely $h_\mu:=\mathcal{P}h_\mu$, where $\mathcal{P}f(x)=f(\Phi^{-t}x)/|\det(D\Phi^t(\Phi^{-t}x))|$.
%Using $(\Phi^t)^*, \Phi^t_*$ as above we have the eigenproblem
\begin{equation}
 \label{eq:lap_weighted}
 \left(\frac{1}{|\mathcal{T}|}\sum_{t\in\mathcal{T}}(\Phi^t)^*\Delta_{\mu^t} \Phi^t_*\right)v=\lambda v
% \label{eq:bdy_weighted}
%\left(\sum_{t\in\mathcal{T}}\nabla_{(\Phi^t)^*e}\right)v\cdot \mathbf{n}&=&0,
 \end{equation}
 where $\Delta_{\mu^t}$ is the $\mu^t$-weighted Laplace operator on $\Phi^t(\M)$ (see \cite{FrKw16} for details).

The weak form of the eigenproblem (\ref{eq:lap_weighted}) can be written as \cite{FrKw16}
\begin{equation*}
 \label{weakweightedeigen}
 \frac{1}{|\mathcal{T}|}\sum_{t\in\mathcal{T}}\int_{\Phi^t(\M)}\nabla_{m^t}(\Phi^t_*\psi)\cdot\nabla_{m^t}(\Phi^t_{*}v)\ d\mu^t=\lambda\int_{\M}\psi v\ d\mu,\quad\text{for all }\psi\in
C^{\infty}(\Omega).
\end{equation*}
Again, if we want to have a continuous-time version of the above eigenproblem, we simply replace the average over $t\in\mathcal{T}$ with an integral.

\section{Appendix: Code example}

We here provide sample code which performs the computations reported on in Section \ref{exp:rotating_double_gyre}.  Note that this code has been stripped down for readability. A more efficient version can be downloaded from \href{https://github.com/gaioguy/FEMDL}{\texttt{https://github.com/gaioguy/FEMDL}}.

\begin{Code}[H]
\hspace*{12pt}\scalebox{0.95}{\lstinputlisting[breaklines=false,numbers=left,linerange={1-50}]{main.m}}
\caption{Main script for the rotating double gyre flow, using the adaptive transfer operator method.}
\end{Code}

\begin{Code}[H]
\hspace*{12pt}\scalebox{0.95}{\lstinputlisting[breaklines=false,numbers=left,linerange={1-50}]{assemble.m}}
\caption{Assembly of stiffness and mass matrix.}
\end{Code}

\begin{Code}[H]
\hspace*{12pt}\scalebox{0.95}{\lstinputlisting[breaklines=false,numbers=left,linerange={1-50}]{flow_map.m}}
\caption{The flow map for the rotating double gyre flow.}
\end{Code}

\end{appendix}

\bibliographystyle{abbrv}

\begin{thebibliography}{10}

\bibitem{alouges}
F.~Alouges.
\newblock A new finite element scheme for {Landau-Lifchitz} equations.
\newblock {\em Discrete Contin. Dyn. Syst. Ser. S}, 1(2):187--196, 2008.

\bibitem{alpert}
C.~J. Alpert, A.~B. Kahng, and S.-Z. Yao.
\newblock Spectral partitioning with multiple eigenvectors.
\newblock {\em Discrete Applied Mathematics}, 90(1-3):3--26, 1999.

\bibitem{BaKo17a}
R.~Banisch and P.~Koltai.
\newblock Understanding the geometry of transport: Diffusion maps for
  lagrangian trajectory data unravel coherent sets.
\newblock {\em Chaos: An Interdisciplinary Journal of Nonlinear Science},
  27(3):035804, 2017.

\bibitem{Bl:05a}
V.~Bl{\aa}sj\"o.
\newblock The isoperimetric problem.
\newblock {\em Amer. Math. Monthly}, 112(6):526--566, 2005.

\bibitem{budisic}
M.~Budi{\v{s}}i{\'c} and I.~Mezi{\'c}.
\newblock Geometry of the ergodic quotient reveals coherent structures in
  flows.
\newblock {\em Physica D: Nonlinear Phenomena}, 241(15):1255--1269, 2012.

\bibitem{chan}
P.~K. Chan, M.~D. Schlag, and J.~Y. Zien.
\newblock Spectral k-way ratio-cut partitioning and clustering.
\newblock {\em IEEE Transactions on Computer-Aided Design of Integrated
  Circuits and Systems}, 13(9):1088--1096, 1994.

\bibitem{Chen:08}
L.~Chen.
\newblock {iFEM}: an integrated finite element methods package in {MATLAB},
  2008.

\bibitem{Chung}
F.~R. Chung.
\newblock {\em Spectral graph theory}, volume~92.
\newblock American Mathematical Soc., 1997.

\bibitem{DJ99}
M.~Dellnitz and O.~Junge.
\newblock On the approximation of complicated dynamical behavior.
\newblock {\em SIAM Journal on Numerical Analysis}, 36(2):491--515, 1999.

\bibitem{DeHuFiSc00a}
P.~Deuflhard, W.~Huisinga, A.~Fischer, and C.~Sch{\"u}tte.
\newblock {Identification of almost invariant aggregates in reversible nearly
  uncoupled Markov chains}.
\newblock {\em Linear Algebra and its Applications}, 315(1-3):39--59, 2000.

\bibitem{EdKiSe:83a}
H.~Edelsbrunner, D.~Kirkpatrick, and R.~Seidel.
\newblock On the shape of a set of points in the plane.
\newblock {\em IEEE Transactions on Information Theory}, 29(4):551--559, Jul
  1983.

\bibitem{Fa:07a}
G.~E. Fasshauer.
\newblock {\em Meshfree approximation methods with {MATLAB}}, volume~6 of {\em
  Interdisciplinary Mathematical Sciences}.
\newblock World Scientific Publishing Co. Pte. Ltd., Hackensack, NJ, 2007.
\newblock With 1 CD-ROM (Windows, Macintosh and UNIX).

\bibitem{F05}
G.~Froyland.
\newblock Statistically optimal almost-invariant sets.
\newblock {\em Physica D}, 200(3):205--219, 2005.

\bibitem{F13}
G.~Froyland.
\newblock An analytic framework for identifying finite-time coherent sets in
  time-dependent dynamical systems.
\newblock {\em Physica D}, 250:1--19, 2013.

\bibitem{F15}
G.~Froyland.
\newblock Dynamic isoperimetry and the geometry of {L}agrangian coherent
  structures.
\newblock {\em Nonlinearity}, 28(10):3587, 2015.

\bibitem{Froyland:2003jj}
G.~Froyland and M.~Dellnitz.
\newblock {Detecting and Locating Near-Optimal Almost-Invariant Sets and
  Cycles}.
\newblock {\em SIAM Journal on Scientific Computing}, 24(6):1839--1863, Jan.
  2003.

\bibitem{FJ15}
G.~Froyland and O.~Junge.
\newblock On fast computation of finite-time coherent sets using radial basis
  functions.
\newblock {\em Chaos}, 25(8):087409, 2015.

\bibitem{FrKw16}
G.~Froyland and E.~Kwok.
\newblock A dynamic {L}aplacian for identifying {L}agrangian coherent
  structures on weighted {R}iemannian manifolds.
\newblock Submitted. Available at \verb"https://arxiv.org/abs/1610.01128".

\bibitem{FLS10}
G.~Froyland, S.~Lloyd, and N.~Santitissadeekorn.
\newblock Coherent sets for nonautonomous dynamical systems.
\newblock {\em Physica D}, 239(16):1527--1541, 2010.

\bibitem{FPG14}
G.~Froyland and K.~Padberg-Gehle.
\newblock Almost-invariant and finite-time coherent sets: directionality,
  duration, and diffusion.
\newblock In {\em Ergodic Theory, Open Dynamics, and Coherent Structures},
  pages 171--216. Springer, 2014.

\bibitem{FPG15}
G.~Froyland and K.~Padberg-Gehle.
\newblock A rough-and-ready cluster-based approach for extracting finite-time
  coherent sets from sparse and incomplete trajectory data.
\newblock {\em Chaos}, 25(8):087406, 2015.

\bibitem{FSM10}
G.~Froyland, N.~Santitissadeekorn, and A.~Monahan.
\newblock Transport in time-dependent dynamical systems: Finite-time coherent
  sets.
\newblock {\em Chaos}, 20(4):043116, 2010.

\bibitem{HaHa16a}
A.~Hadjighasem and G.~Haller.
\newblock Level set formulation of two-dimensional {L}agrangian vortex
  detection methods.
\newblock {\em Chaos}, 26(10):103102, 2016.

\bibitem{HKTH16}
A.~Hadjighasem, D.~Karrasch, H.~Teramoto, and G.~Haller.
\newblock Spectral-clustering approach to {L}agrangian vortex detection.
\newblock {\em Physical Review E}, 93(6):063107, 2016.

\bibitem{haller01}
G.~Haller.
\newblock {Distinguished material surfaces and coherent structures in
  three-dimensional fluid flows.}
\newblock {\em Physica D}, 149:248--277, 2001.

\bibitem{hallerblack}
G.~Haller and F.~J. Beron-Vera.
\newblock Coherent {L}agrangian vortices: The black holes of turbulence.
\newblock {\em Journal of Fluid Mechanics}, 731:R4, 2013.

\bibitem{KK16}
D.~Karrasch and J.~Keller.
\newblock A geometric heat-flow theory of {L}agrangian coherent structures.
\newblock Submitted. Available at \verb"https://arxiv.org/abs/1608.05598".

\bibitem{lasotamackey}
A.~Lasota and M.~C. Mackey.
\newblock {\em Chaos, fractals, and noise: stochastic aspects of dynamics},
  volume~97.
\newblock Springer Science \& Business Media, 2013.

\bibitem{mackay84}
R.~MacKay, J.~Meiss, and I.~Percival.
\newblock Transport in {H}amiltonian systems.
\newblock {\em Physica D: Nonlinear Phenomena}, 13(1-2):55--81, 1984.

\bibitem{mathew_etal}
G.~Mathew, I.~Mezi{\'c}, and L.~Petzold.
\newblock A multiscale measure for mixing.
\newblock {\em Physica D: Nonlinear Phenomena}, 211(1):23--46, 2005.

\bibitem{mezic99}
I.~Mezi{\'c} and S.~Wiggins.
\newblock A method for visualization of invariant sets of dynamical systems
  based on the ergodic partition.
\newblock {\em Chaos}, 9(1):213--218, 1999.

\bibitem{MoMe11a}
B.~A. Mosovsky and J.~D. Meiss.
\newblock Transport in transitory dynamical systems.
\newblock {\em SIAM Journal on Applied Dynamical Systems}, 10(1):35--65, 2011.

\bibitem{romkedar90}
V.~Rom-Kedar and S.~Wiggins.
\newblock Transport in two-dimensional maps.
\newblock {\em Archive for Rational Mechanics and Analysis}, 109(3):239--298,
  1990.

\bibitem{Ryetal07a}
I.~I. Rypina, M.~G. Brown, F.~J. Beron-Vera, H.~Kocak, M.~J. Olascoaga, and
  I.~A. Udovydchenko.
\newblock On the {L}agrangian dynamics of atmospheric zonal jets and the
  permeability of the stratospheric polar vortex.
\newblock {\em Journal of the Atmospheric Sciences}, 64(10):3595--3610, 2007.

\bibitem{Str07}
G.~Strang.
\newblock {\em Computational science and engineering}.
\newblock Wellesley-Cambridge Press, Wellesley, MA, 2007.
\newblock \url{http://math.mit.edu/~gs/cse/}.

\bibitem{thiffeault_review}
J.-L. Thiffeault.
\newblock Using multiscale norms to quantify mixing and transport.
\newblock {\em Nonlinearity}, 25(2):R1, 2012.

\bibitem{ulam}
S.~M. Ulam.
\newblock {\em A collection of mathematical problems}, volume~8.
\newblock Interscience Publishers, 1960.

\bibitem{vanselow}
R.~Vanselow.
\newblock About {D}elaunay triangulations and discrete maximum principles for
  the linear {conforming FEM applied to the Poisson equation}.
\newblock {\em Applications of Mathematics}, 46(1):13--28, 2001.

\bibitem{We:05a}
H.~Wendland.
\newblock {\em Scattered data approximation}, volume~17 of {\em Cambridge
  Monographs on Applied and Computational Mathematics}.
\newblock Cambridge University Press, Cambridge, 2005.

\end{thebibliography}

\end{document}